\newtheorem*{theorem*}{Theorem}
\newtheorem{theorem}{Theorem}[section]
\newtheorem{proposition}[theorem]{Proposition}
\newtheorem{lemma}[theorem]{Lemma}
\newtheorem{corollary}[theorem]{Corollary}
\theoremstyle{definition}
\newtheorem{definition}[theorem]{Definition}
\newtheorem{remark}[theorem]{Remark}
\newtheorem{example}[theorem]{Example} 
\newtheorem{question}[theorem]{Question}
\numberwithin{equation}{section}
\def\zQ{\mathbb Q}   
\def\zZ{\mathbb Z} 
\def\zN{\mathbb N} 
\newcounter{bean}
\newcommand{\Sq}{\operatorname{Sq}}
\newcommand{\Pe}{\mathcal{P}}
\newcommand{\Hom}{\mathrm{Hom}}
\newcommand{\SA}{\mathcal{A}}
\begin{document} 
\title{Graph colouring and Steenrod's problem for Stanley-Reisner rings}

\author[D. Stanley]{Donald Stanley}
\address{Department Mathematics and Statistics, University of Regina, Regina, Canada}
\email{Donald.Stanley@uregina.ca}

\author[M. Takeda]{Masahiro Takeda}
\address{Institute for Liberal Arts and Sciences, Kyoto University, Kyoto, 606-8316, Japan}
\email{takeda.masahiro.87u@kyoto-u.ac.jp}

\date{\today}

\subjclass[2020]{55N10, 55S10, 13F55, 05C15}

\keywords{Steenrod's problem, Stanley-Reisner ring, Steenrod algebra, Graph colouring}

\maketitle

\begin{abstract}
   It is a classical problem in algebraic topology asked by Steenrod in \cite{S} which graded rings occurs as the cohomology ring of a space.
   In this paper, we define an algebraic version of the graph colouring, span colouring, and observe the relation between span colourings and Steenrod's problem for graded Stanley-Reisner rings, in other words polynomial rings divided by an ideal generated by square-free monic monomials.
\end{abstract}

\section{Introduction} 
\label{sec:intro} 

We are interested in which graded $\zZ$-algebras are realized as the cohomology of a space. We study this realization problem for Stanley-Reisner rings with generators in degrees $4$ and $6$.   We also construct span colouring, which is an approximation of colouring for graphs, and relate span colouring to the realization problem for these algebras. Details about terminology can be found in Section \ref{sec:notation}.

The general case of the realization problem, also known as Steenrod's problem, seems intractible though some special 
cases are known. For example the case $\zZ[a]/(a^3)$ is equivalent to the Hopf invariant one 
problem solved by Adams \cite{A}.
The polynomial ring case has been studied by
many researchers  (a very short and incomplete selection is \cite{AGMV,CE,N,ST}), and was finally completed by
Andersen and Grodal \cite{AG}. They showed a graded polynomial ring $\zZ[V]$ is realizable if and only if it is isomorphic to a tensor product of 
various copies of $H^*(BU(1))=\zZ[x_2]$, $H^*(BSU(n))\cong\zZ[x_4, x_6, \dots, x_{2n}]$ and 
$H^*(BSp(n))\cong\zZ[x_4, x_8, \dots, x_{4n}]$, where the subscripts correspond to the degrees of the generators.

We are interested in the larger class of Stanley-Riesner rings of a simplicial complex $K$. These are quotients of polynomial rings 
$SR(K,\phi)=\zZ[V]/I_K$, where the set of generators $V$ is the vertex set of $K$ and $I_K$ is the ideal generated by the non-faces of $K$. The extra $\phi$ in the notation is a map $\phi\colon V\rightarrow 2\zN $ that takes care of the degrees of the generators.

There is already some understanding of Steenrod's problem in this case. 
Davis and Januszkiewicz constructed realizing spaces for all $SR(K,\phi)$ with degree $2$ generators in \cite{DJ}. If the generators are of degree $2$ and $4$, then realizing spaces can be contructed by polyhedral products \cite{BBCG1,BBCG2}. 
However we know from the result for polynomial rings above there are severe restrictions on the possible degrees higher than $4$ even in the polynomial case. 
The second author gives realizablitiy conditions for a class of $SR(K, \phi)$ with general degrees \cite{Ta}. In particular in Takeda's class every pair of degree $4$ generators multiply to $0$. In other words there are no edges between vertices of degree $4$ in $K$.


In this paper we instead are more interested in  the opposite case where there are no relations 
between degree $4$ generators, and additionally we assume all generators have degrees $4$ and $6$. 
We particularly consider the rings $A(n,L)=SR(\Delta^{n-1}\ast L,\phi)$ depending on a natural number $n$ 
and a simplicial complex $L$ 
and relate the realization problem for this class of rings to colourings of the one skeleton of $L$. 
Before describing our realization results we first introduce span colourings and some of their properties. 

\subsection{Span colourings}

We consider graphs as simplicial complexes with no $2$-simplices, and we can also consider a graph 
in a more traditional way as a pair
\[
  G=(V,E)=(V(G), E(G))
\]
where $V$ is a set known as vertices, and $E\subset V\times V$ is the set of edges (in simplicial complex terms $E$ is the set of $1$-simplices). 
Let for $x\in V$, $N(x)$ denote the neighbours (or neighbourhood) of $x$ that is $N(x)=\{ y\in V | (x,y)\in E\}$.
Note that $N(x)$ doesn't contain $x$ itself.
Our first task in the paper will be to develop the theory of span colourings of graphs.

\begin{definition}[Weak span colouring] 
   Let $\mathbf{k}$ be a field and $G=(V,E)$ be a graph. 
   A  $n$-{\em weak span colouring} of $G$ is a function $f\colon V(G)\rightarrow \mathbf{k}^n\setminus \{0\}$ such that for every $x\in V$, $f(x)\notin \langle f(N(x))\rangle$, where $\langle S\rangle$ is the linear subspace spanned by 
   $S\subset \mathbf{k}^n$. 
\end{definition}
An ordinary colouring is a map $f\colon V(G)\rightarrow \{1,2,\dots n\}$ such that for every $x\in V$, $f(x)\notin f(N(x))$.
So weak span colouring is an algebraic version of graph colouring. In Section \ref{sec:span_coloring} we define two other related colourings, span colouring and intermediate span colouring, which we use for different purposes. The weak span colouring has the most naive definition. 
The least $n$ such that $G$ has an $n$-span colouring is the {\em span chromatic number} of $G$, denoted $s_{\mathbf{k}}\chi(G)$. Proposition \ref{same-chromatic-number} proves that the chromatic numbers associated to all three variants coincide. When $\mathbf{k}=\zZ/p$ we write $s_{\mathbf{k}}\chi=s_p\chi$.

For any colouring of any graph $G$ with $n$ colours we can also label the vertices by corresponding basis elements 
in $\mathbf{k}^n$ and any $n$-clique in $G$ needs at least $n$ linearly independent vectors to be span coloured so we get the following proposition, where $clique(G)$ and $\chi(G)$ are the clique and chromatic number of $G$. 
\begin{proposition}[Propositions \ref{important2} and \ref{important}]

For every field $\mathbf{k}$ and every graph $G$, $ clique(G)\leq s_{\mathbf{k}}\chi(G)\leq\chi(G)$.
\end{proposition}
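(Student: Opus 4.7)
The plan is to establish the two inequalities separately, each via a short direct construction or argument. Both inequalities are essentially linear algebra packaged around the definition of a weak span colouring; since Proposition \ref{same-chromatic-number} equates the three variants of span chromatic number, it suffices to work with the weak span version defined above.

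For the upper bound $s_{\mathbf{k}}\chi(G)\leq \chi(G)$, I would start from an ordinary proper colouring $c\colon V(G)\to\{1,\dots,\chi(G)\}$ and define $f\colon V(G)\to \mathbf{k}^{\chi(G)}\setminus\{0\}$ by $f(x)=e_{c(x)}$, where $e_1,\dots,e_{\chi(G)}$ denotes the standard basis. For any vertex $x$, every $y\in N(x)$ satisfies $c(y)\neq c(x)$, so $f(N(x))\subseteq\{e_j:j\neq c(x)\}$. The span $\langle f(N(x))\rangle$ is therefore contained in the coordinate hyperplane $\{v\in \mathbf{k}^{\chi(G)}:v_{c(x)}=0\}$, which does not contain $e_{c(x)}=f(x)$. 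Hence $f$ is a $\chi(G)$-weak span colouring, proving $s_{\mathbf{k}}\chi(G)\leq \chi(G)$.

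For the lower bound $clique(G)\leq s_{\mathbf{k}}\chi(G)$, I would fix an optimal weak span colouring $f\colon V(G)\to \mathbf{k}^n\setminus\{0\}$ with $n=s_{\mathbf{k}}\chi(G)$ and a clique $\{v_1,\dots,v_m\}$ of size $m=clique(G)$. The key observation is that $f(v_1),\dots,f(v_m)$ are linearly independent. Indeed, for each $i$, the clique property gives $\{v_1,\dots,v_m\}\setminus\{v_i\}\subseteq N(v_i)$, so
\[
    \langle f(v_1),\dots,\widehat{f(v_i)},\dots,f(v_m)\rangle \subseteq \langle f(N(v_i))\rangle,
\]
and the weak span colouring condition implies $f(v_i)\notin \langle f(v_1),\dots,\widehat{f(v_i)},\dots,f(v_m)\rangle$. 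Applied to every $i$, this rules out any nontrivial linear relation among the $f(v_i)$, so they are linearly independent vectors in $\mathbf{k}^n$, forcing $m\leq n$.

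I do not expect any serious obstacle here: once the clique-wise linear independence argument is written down cleanly, both inequalities drop out immediately. The only subtle point is to remember that $f(N(x))$ may properly contain the images of the clique at $v_i$, but since the forbidden span can only shrink when we pass to a subset, the implication goes in the direction we need.
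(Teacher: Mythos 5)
Your proof is correct. The upper bound argument is essentially the paper's first proof of Proposition \ref{important}: assign to each vertex the standard basis vector indexed by its colour and observe that the span of the neighbours' images lies in a coordinate hyperplane missing $e_{c(x)}$ (the paper phrases this with the intermediate span colouring and then invokes Proposition \ref{same-chromatic-number}, exactly as you do). For the lower bound, however, you argue directly on $G$: the clique condition forces $f(v_1),\dots,f(v_m)$ to be linearly independent, hence $m\leq n$. The paper instead routes through the representing graph $A_{\mathbf{k}^n}$: it shows that any $k$-clique $\{(U_1,V_1),\dots,(U_k,V_k)\}$ in $A_{\mathbf{k}^n}$ has linearly independent lines $U_i$, concludes $clique(A_{\mathbf{k}^n})=n$ (Proposition \ref{cliqueAn}), and then uses the fact that a graph homomorphism $G\rightarrow A_{\mathbf{k}^n}$ sends cliques to cliques of the same size. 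The linear-algebra content is identical; your version is more elementary and self-contained, while the paper's detour pays for itself later, since the computation of $clique(A_{\mathbf{k}^n})$ also yields $s_{\mathbf{k}}\chi(A_{\mathbf{k}^n})=n$ (Proposition \ref{numberofAn}) and the classification of $n$-cliques in $A_{\mathbf{k}^n}$, both of which are needed in Section \ref{sec:chromatic_number}. One small point worth making explicit in your write-up: a graph homomorphism is injective on cliques because adjacent vertices cannot share an image in a loopless graph; your direct argument silently avoids needing this, which is a minor advantage.
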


If our field is finite we can construct graphs that exhibit an inequality. 
\begin{proposition}[Proposition \ref{chromatic_difference}]
   For any finite field $\mathbf{k}$, there is a graph $G$ such that
   \[
      s_{\mathbf{k}}\chi(G)<\chi(G).
   \]
\end{proposition}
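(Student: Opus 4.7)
The plan is to construct an explicit graph $G$ realising the strict inequality, by exploiting the finite cardinality of $\mathbf{k}$. Write $q = |\mathbf{k}|$. I would begin by recording a structural observation: any $n$-weak span colouring $f\colon V(G)\to \mathbf{k}^n\setminus\{0\}$ induces a proper colouring of $G$ by the $1$-dimensional subspaces $\langle f(v)\rangle$. Indeed, for adjacent $v,w$ one has $\langle f(w)\rangle \subseteq \langle f(N(v))\rangle$, so $f(v)\notin \langle f(w)\rangle$ and the two lines are distinct. Consequently $\chi(G)\leq (q^n-1)/(q-1)$ whenever $s_{\mathbf{k}}\chi(G)\leq n$. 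This bound is much weaker than $\chi(G)\leq n$ once $n\geq 2$, and it is exactly this gap that will be exploited.

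To produce a concrete $G$, I would take the vertex set to be indexed by the $(q^n-1)/(q-1)$ lines of $\mathbf{k}^n$ (equivalently, the points of $\mathbb{P}^{n-1}(\mathbf{k})$), and choose an edge set so that: (a) $\chi(G) > n$, forced by a suitable clique, an odd-cycle obstruction, or a Kneser-type topological/fractional chromatic argument; and (b) the canonical assignment $f(L)=v_L$ with $v_L\in L\setminus\{0\}$ a representative vector produces a valid $n$-weak span colouring, that is, for every vertex $L$ the collection $\{v_M : M\in N(L)\}$ lies in a proper linear subspace of $\mathbf{k}^n$ not containing $v_L$. Cayley graphs on $\mathbf{k}^n$ with carefully chosen generating sets, and Kneser-type constructions based on combinatorial disjointness conditions inherited from finite geometry, are the natural families to search within.

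The main obstacle is satisfying (a) and (b) simultaneously: the edges must be plentiful enough to defeat every proper $n$-colouring, yet sparse enough that no vertex's neighbourhood spans the full $\mathbf{k}^n$. One strategy is to partition the vertex set into blocks (e.g., by a pencil of lines through a fixed subspace) and impose different adjacency rules within and across blocks, so that within-block edges boost the chromatic number while across-block edges keep each neighbourhood inside a hyperplane. In the smallest non-trivial case, one can verify the construction by direct case analysis on a small number of vertices, and general finite $\mathbf{k}$ is then handled by scaling the same combinatorial pattern with $q$. Once the graph is fixed, verification reduces to a routine linear-algebra computation of the spans $\langle v_M : M\in N(L)\rangle$ together with a combinatorial lower bound on $\chi(G)$.
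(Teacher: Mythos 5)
There is a genuine gap: you never actually produce a graph. The proposal correctly identifies the tension to be resolved --- the edge set must be rich enough to force $\chi(G)>n$ yet sparse enough that every neighbourhood spans a proper subspace --- but then only lists families in which one might search (Cayley graphs, Kneser-type constructions, block partitions) and defers the decisive step to an unspecified ``direct case analysis on a small number of vertices.'' Since resolving exactly that tension is the entire content of the proposition, what you have is a plan rather than a proof. Your opening observation (a weak span colouring induces a proper colouring by lines, so $\chi(G)\leq (q^n-1)/(q-1)$ when $s_{\mathbf{k}}\chi(G)\leq n$) is correct but points in the wrong direction: it bounds $\chi$ from above, whereas the example requires a lower bound on $\chi$ together with an upper bound on $s_{\mathbf{k}}\chi$.

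For comparison, the paper sidesteps the search entirely by taking $G=A_{\mathbf{k}^p}$, the representing graph for $p$-span colourings (vertices are pairs $(U,V)$ of a line and a hyperplane with $U\not\subset V$, not just lines), for a prime $p$ with $q\not\equiv 0,1 \pmod p$. The upper bound $s_{\mathbf{k}}\chi(A_{\mathbf{k}^p})\leq p$ is automatic from the identity homomorphism, and equality follows from the clique bound. The lower bound $\chi(A_{\mathbf{k}^p})>p$ comes from a counting argument: the cliques indexed by bases of $\mathbf{k}^p$ cover $A_{\mathbf{k}^p}$ evenly, so a homomorphism to $K_p$ would force the divisibility $p\mid (q^p-1)q^{p-1}$, which Fermat's little theorem rules out when $q\not\equiv 0,1\pmod p$. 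If you want to salvage your approach, you would need to either reproduce such an obstruction for a concrete graph of your own, or adopt the representing-graph construction, whose defining feature is that the span-colouring upper bound comes for free.
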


To prove the last proposition we contruct a representing graph $A_{\mathbf{k}^n}$ for $n$-span colourings. 
This graph has the property that the $n$-span colouring of a graph $G$ are in bijection with 
$Hom(G,A_{\mathbf{k}^n})$. In this sense $K_n$ would be the representing graph for normal $n$-colourings. 
Since we have the identity map of $A_{\mathbf{k}^n}$ we see that $s_{\mathbf{k}}\chi(A_{\mathbf{k}^n})\leq n$ and 
by constructing $n$-cliques in $A_{\mathbf{k}^n}$ we get a lower bound and see that 
$s_{\mathbf{k}}\chi(A_{\mathbf{k}^n})=n$ .
Letting $q$ be the size of the finite field $\mathbf{k}$ and $p$ a prime, we show that for favorable combinations of $q$ and $p$, $Hom(A_{\mathbf{k}^p}, K_p)=\emptyset$ (Proposition \ref{cong}), and putting these together
$\chi(A_{\mathbf{k}^p})>p=s_{\mathbf{k}}\chi (A_{\mathbf{k}^p})$
(Example \ref{smallestexample}). 
There is also a characterisitic $0$ version of Proposition \ref{chromatic_difference}. There is a graph $G$
with  $\chi(G)=4$ and $s_{\mathbb{R}}\chi(G)=3$ (Example \ref{God}).

\subsection{Algebras with generators in degrees $4$ and $6$, $\bold{A(n,L)}$}

We will restrict to Stanley-Reisner ring $SR(K,\phi)$ of a simplicial complex $K$,  $\zZ[x_1, \dots x_n, y_1, \dots , y_m]/I_K$, generated by degree $4$ and $6$ generators $x_i$ and $y_i$. 
Consider the case $K$ a join $K=\Delta^{n-1}\ast L$ when the vertices of the $n-1$ simplex $\Delta^{n-1}$ have degree $4$ and the vertices of $L$ have degree $6$. 
Then we use the notation $A(n,L)=SR(K,\phi)$. This is one of the main objects of study in this paper. 

If $A(n,L)$ is realized by $X$, in other words $A(n,L)\cong H^*(X, \zZ)$, then $H^*(X, \zZ/2)$ must have an unstable action of the mod-$2$ Steenrod Algebra 
$\SA_2$. Since $A(n,L)$ is torsion free,  $H^*(X, \zZ/2)\cong A(n,L)\otimes \zZ/2$, and so we first look at when 
$A(n,L)\otimes \zZ/2$ has an action of 
$\SA_2$. 
Let $L^1$ be the $1$-skeleton of $L$. We can use an $n$-span colouring over $\zZ/2$ on $L^1$ to get an 
$\SA_2$ action on $A(n,L)\otimes \zZ/2$.

\begin{theorem}[Theorem \ref{construct_Steenrod_mod_str}]
Suppose $L$ is a simplicial complex such that 
    $s_2\chi(L^1)\leq n$. 
   Then $A(n,L) \otimes \zZ/2$ has an $\SA_2$ action. 
\end{theorem}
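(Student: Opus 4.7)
The plan is to transfer the given colouring into candidate Steenrod operations on the polynomial generators of $A(n,L)\otimes\zZ/2$, extend these by the Cartan formula to the ambient polynomial ring, and then verify both that the Stanley--Reisner ideal is stable and that the Adem relations hold on generators (which suffices, by Cartan, for them to hold on all products).

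First I would unpack the hypothesis.  By Proposition~\ref{same-chromatic-number} an $n$-weak span colouring exists, say $f\colon V(L)\to(\zZ/2)^n\setminus\{0\}$ with $f(v)=(c_1(v),\dots,c_n(v))$.  The condition $f(v)\notin\langle f(N(v))\rangle$ supplies, for every $v$, a linear functional $\mu^v=(\mu^v_1,\dots,\mu^v_n)\in\Hom((\zZ/2)^n,\zZ/2)$ satisfying $\mu^v(f(v))=1$ and $\mu^v(f(u))=0$ for all $u\in N(v)$.  Setting $\lambda_v:=\sum_i\mu^v_i x_i$ (a degree-$4$ class), I would then declare
\[
\Sq^2 x_i=\sum_v c_i(v)\,y_v,\quad \Sq^4 x_i=x_i^2,\quad \Sq^2 y_v=0,\quad \Sq^4 y_v=\lambda_v y_v,\quad \Sq^6 y_v=y_v^2,
\]
with all remaining $\Sq^k$ on generators set to zero (forced either by instability or by the fact that $A(n,L)\otimes\zZ/2$ is concentrated in even degree), and extend to $\zF_2[x_1,\dots,x_n,y_v:v\in V(L)]$ by the Cartan formula.

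The descent to the quotient is the easy half: every $\Sq^j y_v$ in the list above is a polynomial multiple of $y_v$, so each Cartan summand of $\Sq^j(y_{v_1}\cdots y_{v_k})$ for a non-face monomial is a polynomial multiple of $y_{v_1}\cdots y_{v_k}$ and therefore lies in $I_L$; multiplying by $\Sq^a x^I$ keeps us in $I_L$, so the operations are well defined modulo $I_L$.

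The hard part, and the place where the weak span colouring hypothesis is used essentially, is the one non-trivial Adem relation $\Sq^2\Sq^4 y_v=\Sq^6 y_v=y_v^2$ (all other Adem relations on the generators reduce to $0=0$ for degree reasons).  Using Cartan and $\Sq^2 y_v=0$,
\[
\Sq^2(\lambda_v y_v)=(\Sq^2\lambda_v)\,y_v=\Bigl(\textstyle\sum_w \mu^v(f(w))\,y_w\Bigr)y_v.
\]
In the quotient the Stanley--Reisner relations annihilate $y_vy_w$ whenever $\{v,w\}$ is a non-face of $L$, while the defining property of $\mu^v$ kills the $w\in N(v)$ summands, leaving exactly $\mu^v(f(v))\,y_v^2=y_v^2$.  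This is the main obstacle; once it is checked the theorem follows.
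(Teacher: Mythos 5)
Your overall strategy is genuinely different from the paper's. You define the operations on generators directly from the colouring and its dual functionals, extend by the Cartan formula, and try to verify the Adem relations by hand. The paper instead writes $A(n,L)\otimes\zZ/2$ as a limit over $P_{\max}(\Delta^{n-1}\ast L)^{op}$ of polynomial algebras $\zZ/2[\sigma]$ (Lemmas \ref{limit_alg_Stanley_Reisner} and \ref{limit_alg_Steenrod_geberal}), and on each maximal simplex uses the span colouring to make a linear change of generators identifying $\zZ/2[\sigma]$ with $\bigotimes H^*(BSU(3);\zZ/2)\otimes\bigotimes H^*(BSU(2);\zZ/2)$ (Lemma \ref{Steenrod_structure_on_SU(3)}); the Adem relations then hold for free because these are cohomology rings of spaces. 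Your formulas are the ``transpose'' of the paper's (you put the colouring vector into $\Sq^2x_i$ and the dual functional into $\Sq^4y_v$, the paper does the reverse via $\Sq^4(y_i)=y_if(y_i)$ and $\Sq^2(x_j)=s(x_j)$), but both conventions are compatible with either argument, and your computation of $\Sq^2\Sq^4y_v$ correctly isolates where the colouring hypothesis enters --- it is the same linear-algebra input that the paper extracts in the converse direction in Lemma \ref{neighborhood_independent}.

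The gap is the parenthetical claim that checking the Adem relations on generators ``suffices, by Cartan, for them to hold on all products.'' As stated this does not follow. An individual Adem relator $R=\Sq^a\Sq^b+\sum_c\binom{b-1-c}{a-2c}\Sq^{a+b-c}\Sq^c$ is not primitive in the free algebra $F$ on the $\Sq^i$; its coproduct lies in $I\otimes F+F\otimes I$ where $I$ is the two-sided ideal of relators, so expanding $R(uv)$ by Cartan produces terms $(\theta'u)(\theta''v)$ with, say, $\theta'=\sum_j\alpha_jR_j\beta_j\in I$, and killing such a term requires $R_j(\beta_j u)=0$ --- i.e.\ the relators must annihilate elements $\beta_j u$ that are not generators. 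Since the operations raise degree unboundedly (e.g.\ $\Sq^6y_v=y_v^2$, $\Sq^{12}\Sq^6y_v=y_v^4$, \dots), a naive induction on degree does not close up either. This reduction to generators is precisely the hard point the paper's limit-plus-$BSU(3)$ argument is designed to avoid. Your approach can be repaired: the Bullett--Macdonald packaging of the Adem relations as an identity $Q(a)Q(b)=Q(c)Q(d)$ between composites of the total operations $Q(t)=\sum_i\Sq^it^i$, each of which is multiplicative by the Cartan formula, does legitimately reduce the verification to algebra generators (and the same device handles the instability conditions on non-generators). But that reformulation, or some substitute for it, has to be invoked explicitly; ``by Cartan'' alone is not a proof. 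Note also that your key identity $\Sq^2\Sq^4y_v=y_v^2$ holds only in the quotient by $I_K$, not in the ambient polynomial ring, so whichever reduction-to-generators argument you use must be run in $A(n,L)\otimes\zZ/2$ itself; this is fine, since you have already shown the ideal is stable and Cartan descends, but it should be said.
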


This theorem is proved by writing $A(n,L)\otimes \zZ/2$ as a limit of polynomial algebras and using the colouring to upgrade this limit to a limit in the category of unstable algebras over $\SA_2$. 

To prove the converse (Theorem \ref{Steenrod_alg_chromatic_number}) we need to assume that $L$ is equal to its $1$-skeleton, in other words that it is a graph. The proof constructs a span colouring out of an $\SA_2$ action. It requires some analysis of the $\SA_2$ action on the ideal $I_K$ and a reduction to graphs with minimal degree $2$ by sequentially removing degree $0$ and $1$ vertices.  Putting Theorems \ref{construct_Steenrod_mod_str} and \ref{Steenrod_alg_chromatic_number} together we get:

\begin{theorem}[Theorem \ref{sameSteen}]
 Suppose $G$ is a graph, then 
$A(n,G)\otimes \zZ/2$ has an $\SA_2$ action if and only if $s_2\chi(G)\leq n$. 
\end{theorem}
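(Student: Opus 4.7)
The plan is to recognize that this statement is the direct union of the two preceding theorems: Theorem \ref{construct_Steenrod_mod_str} supplies the "if" direction (a span colouring produces an $\SA_2$-action), while Theorem \ref{Steenrod_alg_chromatic_number} supplies the "only if" direction (an $\SA_2$-action produces a span colouring). The hypothesis that $G$ is a graph is precisely the setting in which both theorems apply simultaneously: Theorem \ref{construct_Steenrod_mod_str} works for any simplicial complex $L$, so in particular for a graph where $L^1 = L$, and Theorem \ref{Steenrod_alg_chromatic_number} is stated under the restriction that the simplicial complex equal its $1$-skeleton. So the combination step itself is a one-line citation.

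Since the interesting content lies in the reverse direction, I sketch how I would carry it out. Given an $\SA_2$-action on $A(n,G) \otimes \zZ/2$, I would extract the candidate weak span colouring from $\Sq^2$ applied to the degree-$4$ generators $x_1,\ldots,x_n$. Since $\Sq^2(x_i)$ has degree $6$, it is a $\zZ/2$-linear combination of the degree-$6$ generators $y_j$ together with decomposable monomials $x_k x_\ell$ (modulo $I_K$); writing $\Sq^2(x_i) = \sum_j \lambda_{ij}\, y_j + (\text{decomposable})$, the vectors $f(y_j) = (\lambda_{1j},\ldots,\lambda_{nj}) \in (\zZ/2)^n$ become the candidate colouring. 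The goal is to show that $f(y_j) \notin \langle f(N(y_j))\rangle$, by arguing that any such linear dependence would, via the Cartan formula and the unstability axiom $\Sq^{|x|}(x) = x^2$, force a relation incompatible with the structure of $I_K$ at an edge $\{y_j, y_{j'}\}$.

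The main obstacle is verifying this avoidance property and in particular handling vertices of small degree: a vertex of degree $0$ or $1$ imposes essentially no constraint on the $\Sq^2$-coefficients and so does not automatically yield a useful vector. As the excerpt indicates, this is resolved by an inductive reduction: sequentially delete degree-$0$ and degree-$1$ vertices, recover a span colouring of the smaller graph (of minimum degree $\geq 2$) from the ambient $\SA_2$-action on the restricted ring, then extend the colouring by choosing, at each deleted vertex, an arbitrary nonzero vector (for an isolated vertex) or a vector outside a single hyperplane (for a leaf), both trivially possible in $(\zZ/2)^n$ for $n \geq 1$. Once both directions are in hand, the proof of Theorem \ref{sameSteen} reduces to citing Theorem \ref{construct_Steenrod_mod_str} and Theorem \ref{Steenrod_alg_chromatic_number}.
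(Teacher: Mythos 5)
Your proposal is correct and matches the paper exactly: the paper's proof of Theorem \ref{sameSteen} is the one-line citation of Theorem \ref{construct_Steenrod_mod_str} for the ``if'' direction and Theorem \ref{Steenrod_alg_chromatic_number} for the ``only if'' direction. (One small remark on your sketch of the latter, which is not needed for the statement at hand: the paper extracts the colouring from $\Sq^4(y_i)=y_i f(y_i)$ rather than from the $y_j$-coefficients of $\Sq^2(x_i)$, using Lemma \ref{preserve_pre} and the Adem relation $\Sq^2\Sq^4=\Sq^6+\Sq^5\Sq^1$ to verify the avoidance property, though the degree-$0$/$1$ vertex reduction you describe is exactly Lemma \ref{remove_degree_0_1}.)
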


As observed above this means if $A(n,G)$ is realizable then $A(n,G)\otimes \zZ/2$ has an $\SA_2$ action and so Theorem \ref{Steenrod_alg_chromatic_number} also gives us:

\begin{theorem}[Corollary \ref{realimpspan}]
For any graph $G$, 
if $A(n,G)$ is realizable, then $G$ has an $n$-span colouring over $\zZ/2$ and so $s_2\chi(G)\leq n$. 
\end{theorem}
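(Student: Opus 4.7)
The plan is to treat this as an immediate consequence of Theorem \ref{Steenrod_alg_chromatic_number} (equivalently Theorem \ref{sameSteen}), combined with standard facts about mod-$2$ cohomology. First I would let $X$ be a space with $H^*(X;\zZ)\cong A(n,G)$ as graded rings. The key observation is that $A(n,G)$ is a Stanley-Reisner ring over $\zZ$ and, as such, is a free $\zZ$-module on the monomial basis coming from the faces of $\Delta^{n-1}\ast G$; in particular it has no torsion in any degree.

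Because $H^*(X;\zZ)\cong A(n,G)$ is torsion-free, the universal coefficient theorem collapses and gives a natural isomorphism of graded rings
\[
H^*(X;\zZ/2)\;\cong\; H^*(X;\zZ)\otimes \zZ/2 \;\cong\; A(n,G)\otimes \zZ/2.
\]
The left-hand side is a cohomology ring with $\zZ/2$ coefficients, so it carries its tautological unstable action of the mod-$2$ Steenrod algebra $\SA_2$. Transporting this across the isomorphism endows $A(n,G)\otimes \zZ/2$ with an unstable $\SA_2$-algebra structure.

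At this point I would simply invoke Theorem \ref{Steenrod_alg_chromatic_number} (or the packaged form Theorem \ref{sameSteen}): since $G$ is a graph and $A(n,G)\otimes \zZ/2$ admits an $\SA_2$-action, there exists an $n$-span colouring of $G$ over $\zZ/2$, that is $s_2\chi(G)\leq n$. This completes the argument.

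There is essentially no new difficulty here; the entire content of the corollary is repackaging. The only point worth flagging is that Theorem \ref{Steenrod_alg_chromatic_number} is stated for graphs, which is why we need the hypothesis that $G$ is a graph rather than an arbitrary simplicial complex $L$, and that all the hard work (analyzing how $\SA_2$ interacts with $I_K$ and reducing to graphs of minimal degree $\geq 2$) has already been discharged in the proof of that theorem.
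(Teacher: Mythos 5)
Your proposal is correct and follows exactly the route the paper takes: use torsion-freeness of $A(n,G)$ to identify $H^*(X;\zZ/2)$ with $A(n,G)\otimes\zZ/2$, transport the unstable $\SA_2$-action, and invoke Theorem \ref{Steenrod_alg_chromatic_number}. Nothing is missing.
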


We can consider this as an inequality relating colourings to a topologically defined graph invariant. 
For a graph $G$ let $\chi_{Top}(G)$ be the least $n$ such that $A(n,G)$ is realizable. Then we can reinterpret the theorem as saying $s_2\chi(G)\leq \chi_{Top}(G)$. 
To get an upper bound we use the usual chromatic number $\chi$. 
If the chromatic number of $L^1$, the one skeleton of $L$, is less than $n$, then 
$A(n,L)$ is realizable.

 \begin{theorem}[Theorem \ref{color_theorem}]
 Let $L$ be a simplicial complex. 
 If $\chi(L^1)\leq n$ then $A(n,L)$ is realizable. 
 \end{theorem}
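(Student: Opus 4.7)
The plan is to realize $A(n,L)$ as the cohomology of a homotopy colimit of products of $BSU(2)$ and $BSU(3)$, using the colouring of $L^1$ to pair each degree-$6$ generator $y_v$ with the degree-$4$ generator $x_{c(v)}$ inside a copy of $BSU(3)$ (whose integral cohomology is $\zZ[c_2,c_3]$ with $|c_2|=4$ and $|c_3|=6$). First I would fix a proper colouring $c\colon V(L)\to\{1,\ldots,n\}$, which exists by the hypothesis $\chi(L^1)\leq n$.

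For each simplex $\sigma\in L$, I would set $Z_\sigma=\prod_{i=1}^{n} W_i^\sigma$ where $W_i^\sigma=BSU(3)$ if $i\in c(\sigma)$ and $W_i^\sigma=BSU(2)$ otherwise. Because $c$ is proper and every $\sigma\in L$ is a clique in $L^1$, the restriction $c|_\sigma$ is injective, and Borel's theorem together with the Künneth formula gives
\[
 H^*(Z_\sigma;\zZ)\;\cong\; \zZ[x_1,\ldots,x_n]\otimes \zZ[y_v:v\in\sigma],
\]
where $x_i$ is the $c_2$-class of the $i$th factor and $y_v$ is the $c_3$-class of the $c(v)$th factor. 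For $\tau\subseteq\sigma$ I would define $\iota_{\tau\sigma}\colon Z_\tau\to Z_\sigma$ to be the standard inclusion $BSU(2)\hookrightarrow BSU(3)$ on each factor indexed by $i\in c(\sigma)\setminus c(\tau)$ and the identity on the remaining factors; these are functorial and on cohomology induce the surjections $H^*(Z_\sigma;\zZ)\twoheadrightarrow H^*(Z_\tau;\zZ)$ that kill $y_v$ for $v\in\sigma\setminus\tau$. Consequently
\[
 \lim_{\sigma\in L} H^*(Z_\sigma;\zZ)\;=\;\zZ[x_1,\ldots,x_n]\otimes SR_6(L)\;=\;A(n,L),
\]
by the standard description of a Stanley-Reisner ring as an inverse limit of polynomial subrings over its face poset.

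Finally I would put $X=\mathrm{hocolim}_{\sigma\in L}\,Z_\sigma$ and identify $H^*(X;\zZ)$ with $A(n,L)$ via the Bousfield-Kan cohomology spectral sequence
\[
 E_2^{p,q}\;=\;{\lim}^{p}_{\sigma\in L}\, H^q(Z_\sigma;\zZ)\;\Longrightarrow\;H^{p+q}(X;\zZ).
\]
The $p=0$ column is already $A(n,L)$, so everything reduces to proving ${\lim}^{p}=0$ for $p>0$; this is where I expect the main technical obstacle. To handle it I would split the diagram as the tensor product of the constant diagram $\zZ[x_1,\ldots,x_n]$ with the standard face-poset diagram $\sigma\mapsto\zZ[y_v:v\in\sigma]$ and invoke the known vanishing for the latter, which is the computation underlying the Davis-Januszkiewicz and Bahri-Bendersky-Cohen-Gitler descriptions of Stanley-Reisner rings as cohomologies of polyhedral products \cite{DJ,BBCG1,BBCG2}. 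An alternative route would be to recognise $X$ directly as a polyhedral product built from the pair $(BSU(3),BSU(2))$ together with the coloring-induced diagonals identifying the degree-$4$ classes within each colour class, and then apply the BBCG cohomology formulas. Once additive collapse is established, the matching ring structure would follow from the naturality of the edge homomorphism $H^*(X;\zZ)\to \lim_\sigma H^*(Z_\sigma;\zZ)$, yielding $H^*(X;\zZ)\cong A(n,L)$ and realizing $A(n,L)$ by $X$.
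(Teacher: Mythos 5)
Your construction is correct in outline, but it is organised quite differently from the paper's proof, which is essentially a two-line reduction: Lemma \ref{sticky} translates the hypothesis $\chi(L^1)\leq n$ into a decomposition $\coprod_i A_i=V(K)$ of the vertex set of $K=\Delta^{n-1}\ast L$ such that every $\sigma\in P_{\max}(K)$ meets each $A_i$ in a set of degrees $\{4,6\}$, $\{4\}$ or $\emptyset$, and then Theorem \ref{previous_theorem} (Theorem 1.3 of \cite{Ta}) is invoked to produce the realizing space. What you have written is, in effect, an unpacking of the black box inside \cite{Ta}: a diagram of products of $BSU(2)$ and $BSU(3)$ whose homotopy colimit realizes $A(n,L)$, with the colouring dictating which factors are promoted to $BSU(3)$ over each simplex. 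The differences are that you index over the full face poset of $L$ rather than over $P_{\max}(K)$ (harmless, since both are final in the relevant sense --- compare Lemma \ref{final_functor_limit}), and that you must then supply the collapse of the Bousfield--Kan spectral sequence yourself, whereas the paper outsources all point-set and spectral-sequence work to the earlier reference. What your route buys is self-containedness and a transparent picture of the space; what the paper's route buys is brevity and a purely combinatorial verification (Lemma \ref{sticky}) in place of any homotopy-theoretic argument.

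Two small cautions. First, the step you correctly flag as the main obstacle --- vanishing of $\lim^{p}$ for $p>0$ of the diagram $\sigma\mapsto\zZ[y_v:v\in\sigma]$ over the face category --- is indeed a known result, but it is not really proved in \cite{DJ} or \cite{BBCG1,BBCG2}; the precise statements live in the Notbohm--Ray and Panov--Ray treatments of Davis--Januszkiewicz spaces as homotopy colimits, so the citation should be sharpened before this counts as a complete proof. Your reduction to that case by splitting off the constant tensor factor $\zZ[x_1,\dots,x_n]$ is fine, since in each degree the diagram is a finite direct sum of degree shifts of the Stanley--Reisner diagram and derived limits commute with finite sums. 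Second, the derived limits in the Bousfield--Kan spectral sequence for $\mathrm{hocolim}_{\mathcal{C}}Z$ are taken over $\mathcal{C}^{op}$; your indexing elides this, and you should also note that including the empty simplex in the indexing poset is harmless. Neither point affects the substance of the argument.
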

 
 The proof uses a category  $P_{\max}(K)$ and the colouring  to construct a functor of  spaces which are products of $BSU(2)$ and $BSU(3)$ and with colimit that realizes $A(n,L)$. Much of this process was worked out by the second author \cite{Ta}.  
 
 When our simplicial complex is a graph $G$ Corollary \ref{realimpspan} and Theorem \ref{color_theorem} together are equivalent to the following:

\begin{theorem}[Theorem \ref{topboundsfromgraphs}]
For any graph $G$, 
\begin{equation}\label{maininequ}
s_2\chi(G)\leq \chi_{Top}(G)\leq \chi(G)
\end{equation}
\end{theorem}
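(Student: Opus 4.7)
The plan is to observe that the inequality in the theorem is a direct repackaging of the two previously stated results: each of the two bounds follows from one of them applied with a suitable choice of $n$.

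For the upper bound $\chi_{Top}(G)\leq \chi(G)$, I would set $n=\chi(G)$. When $G$ is viewed as a simplicial complex (with no simplices of dimension $\geq 2$) it coincides with its own $1$-skeleton, so $\chi(G^1)=\chi(G)\leq n$, and Theorem \ref{color_theorem} supplies a space realizing $A(n,G)$. By definition of $\chi_{Top}$ this gives $\chi_{Top}(G)\leq n=\chi(G)$; in particular $\chi_{Top}(G)$ is finite, so the infimum defining it is attained for any finite graph.

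For the lower bound $s_2\chi(G)\leq \chi_{Top}(G)$, I would set $n=\chi_{Top}(G)$, so that $A(n,G)$ is realizable by the very definition of $\chi_{Top}$. Corollary \ref{realimpspan} applied to this realization then yields directly $s_2\chi(G)\leq n=\chi_{Top}(G)$.

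There is no substantive obstacle at this stage, since all the real work has been carried out in Theorem \ref{color_theorem} and Corollary \ref{realimpspan}; the only thing worth recording for internal consistency is that the two resulting bounds do not contradict each other, i.e.\ that $s_2\chi(G)\leq \chi(G)$, which was already established in Proposition \ref{important}. So the proof reduces to the two one-line invocations above.
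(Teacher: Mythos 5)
Your proposal is correct and follows exactly the paper's argument: the lower bound is Corollary \ref{realimpspan} applied with $n=\chi_{Top}(G)$, and the upper bound is Theorem \ref{color_theorem} applied with $n=\chi(G)$. The paper's proof is the same two-line invocation, so there is nothing to add.
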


From Example \ref{smallestexample} we know $s_2\chi(A_{(\zZ/2)^3})<\chi(A_{(\zZ/2)^3})$, so at most one of the inequalities in Equation \ref{maininequ} can be an equality. To show $s_2\chi(G)=\chi_{Top}(G)$, or really any upper bound, would require the construction of a space. Although $A(n,G)$ can be written as a limit of realizable polynomial algebras, it does not seem possible to consistently realize all the maps and thus the whole diagram as a diagram of spaces. On the other hand showing $s_2\chi(G)<\chi_{Top}(G)$ requires finding more refined topological invariants that can detect the difference. These might give rise to combinatorially defined invariants of graphs that give approximations to $\chi$ that are better than $s_2\chi$. More discussion exists in Section \ref{sec:future_problem}. 

We also classify the realizable Stanley-Reisner algebras with two degree $4$ and an arbitrary number of degree $6$ generators (Theorem \ref{n_2_if_and_only_if_condition}).



\subsection{Outline of sections}

In Section \ref{sec:notation} we set some notation and give some information about the algebras $A(n,L)$. 
In Section \ref{sec:span_coloring} we introduce three versions of span colouring and show they all give rise to the same chromatic number. In Section \ref{sec:span_coloringgraph} we construct a graph $A_{\mathbf{k}^n}$ such that for any graph $G$, $Hom(G,A_{\mathbf{k}^n})$ is the set of $n$-span colourings of $G$. 
In Section \ref{sec:chromatic_number} we prove that for some graphs the span chromatic number is different from the ordinary chromatic number.
After this we consider the relation between colouring and Steenrod's problem for Stanley-Reisner 
rings.
In Section \ref{sec:Span_colouring_and_Steenrod_algebra}, we show that for any graph $G$ if $A(n,G)$ is realizable then the span chromatic number over $\zZ/2$ is at most $n$. 
In Section \ref{sec:Construction_of_Steenrod_algebra_structure}, we construct Steenrod algebra actions on Stanley-Reisner rings using span colourings. 
In Section \ref{sec:Proof_of_the_main_theorem} we construct spaces which realize $A(n,L)$ when $\chi(L^1)\leq n$ and prove the classification result in the case there are two degree $4$ generators. 
We give some general thoughts and ask questions in Section \ref{sec:future_problem}. 

\subsection{Acknowledgements}
The first author is supported by the Natural Sciences and Engineering Research Council of Canada grant RGPIN-05466-2020. The second author was supported by Foundation of Research Fellows, The Mathematical Society of Japan and
JSPS KAKENHI JP24KJ1758. 
Thanks to Karen Meagher for suggesting we use graph homomorphisms to study colourings. 
We would also like to thank Kyoto University where this work was begun and which supported the visit of the first author. 
The first author also thanks Taida Institute for Mathematical Sciences and Pacific Institute for Mathematical Sciences for their hospitality.

\section{Preliminaries}\label{sec:notation}

A simplicial complex on a vertex set $V$ is $K$ a subset of the power set of $V$ closed under inclusions. Note that this means $\emptyset$ is in $K$ except when $K$ is empty. 
For a simplicial complex $K$ its vertex set is denoted $V(K)$. 
So a simplex $\sigma\in K$ is a subset of $V(K)$.

We also use $\sigma$ to denote the power set of $\sigma$, $\mathcal{P} (\sigma)$ and consider it as the corresponding subsimplicial complex
$\sigma \subset K$. 
We let $\Delta^{n-1}$ the simplex with vertex set $\{ x_1, \dots , x_n\}$. 

We sometimes regard $K$ as a poset such that for $\sigma,\tau \in K$, $\sigma<\tau$ if and only if $\sigma\subset \tau$.
Let $P_{\max}(K)$ be a subposet of $K$ such that for $\sigma \in K$ $\sigma \in P_{\max}(K)$ if and only if there exist maximal simplices $\sigma_1, \dots \sigma_n \in K$ such that $\bigcap \sigma_i =\sigma$.
We regard $1$-skeleton of $K$ as a graph, and we denote this graph $K^1$, the same symbol we use for the $1$-skeleton of $K$.

For a subset $\sigma\subset V(K)$, let $K_{\sigma}$ denote the full subcomplex of $K$ with vertex set $\sigma$. We can write it as a projection or as a subcomplex

$$L_{\sigma}=\{ \tau\cap \sigma | \tau\in L\}=\{\tau | \tau \in L, \tau\subset \sigma\}$$

For $K,L$ simplicial complexes the join $K\ast L=\{ \sigma \cup \tau | \sigma\in K, \tau\in L\}$ is a simplicial 
complex on $V(K)\cup V(L)$. 

The poset of $K\ast L$ is the product of the posets of $K$ and $L$. 

Consider the geometric realization of a simplicial complex $K$, $|K|$. Although we will not use realization it is compatible with joins, 
meaning $|K\ast L|\cong |K| \ast |L|$.

We consider graphs as simplicial complexes with only $0$ and $1$-simplices. 
We also use more standard notation for graphs and let $G=(V,E)$ denote a graph with vertex set $V$ and edge set $E$.

For  $x\in V$ the neighbourhood (or set of neighbours) of $x$ is $N(x)=\{ y| (x,y)\in E\}$.

$K_n$ denotes the complete graph on vertex set $\{1, \cdots , n\}$. 
An $n$-clique in $G$ is a subgraph isomorphic to $K_n$. 
The clique number of $G$, $clique(G)$ is the size of the largest clique in $G$.

We let $\mathbf{k}$ denote a field and 
$\mathbf{k}^n$ be the $\mathbf{k}$-vector space with basis $\{ e_1, \cdots, e_n\}$. Recall that $\langle S\rangle$ denotes the linear subspace spanned by $S\subset \mathbf{k}^n$, and so 
$\mathbf{k}^n=\langle  e_1, \cdots, e_n\rangle$. 
Let $ P\mathbf{k}^n$ denote the projective space of lines in $\mathbf{k}^n$, an element of $P\mathbf{k}^n$ is a dimension $1$ subspace of $\mathbf{k}^n$.
Recall $\mathrm{Gr}_l(\mathbf{k}^n)$ denotes the Grassmannian, more explicitly it is the set of $l$ dimensional subspaces of $\mathbf{k}^n$. Using this notation $P\mathbf{k}^n=\mathrm{Gr}_1(\mathbf{k}^n)$.

Let $K$ be a simplicial complex with the vertex set $V$, and $\phi \colon V\rightarrow 2\zZ_{>0}$.
Then the graded Stanley-Reisner ring $SR(L,\phi)$ is defined by
\[
   SR(K,\phi)= \zZ[V]/I_K,
\] 
where $\zZ[V]$ is the polynomial ring generated by $x \in V$ with $|x|=\phi(x)$ and $I_K$ is the ideal generated by monomials $x_1 x_2\dots x_k$ with $\{ x_1,x_2,\dots ,x_k\}\notin K$ as a simplex.

We need a functoriality property of $SR(K,\phi)$. 
If we have a subcomplex $K'\subset K$ then we get an algebra map 
$\pi\colon SR(K,\phi)\rightarrow SR(K', \phi)$, since $I_K\subset I_{K'}$.

For example $\sigma\in K$ can be considered as a simplicial complex and we get a map 
$SR(K,\phi)\rightarrow SR(\sigma,\phi)=\zZ[\sigma]$.

\begin{example}
$SR(K\ast L, \phi_K\cup \phi_L)\cong SR(K, \phi_K)\otimes SR(L, \phi_L)$. This follows since the minimal missing simplices of $K\ast L$ consists of the union of the minimal missing simplices of $K$ and $L$. 
\end{example}

\subsection{The algebras $A(n, L)$ and $A(n,G)$}
 
 We will restict to looking at the case when all generators are of degrees $4$ and $6$. 
 
Recall the definition of $\Delta^{n-1}$ above. The realizability of the algebras $A(n,L)$ and $A(n,G)$ is the main problem studied in the second half of the paper. 
 
 \begin{definition}
 
Suppose $L$ is a simplicial complex with $V(L)=\{ y_1, \dots,  y_m \}$ and $n$ a non-negative integer. Let $A(n,L)=SR(\Delta^{n-1}\ast L, \phi)$ where $\phi(x_i)=4$ and $\phi(y_i)=6$. 

We sometimes particularly consider the case $A(n,G)$ where $G$ is a graph considered as a simplicial complex. 

\end{definition}

We record the following lemma without proof. 

\begin{lemma}\label{46abthings}
Suppose we have a Stanley-Reisner algebra whose generators are in degrees $4$ and $6$, 

$$SR(K,\phi)=\zZ[ x_1, \dots, x_n,  y_1, \dots,  y_m]/I_K$$

where $\phi(x_i)=4$ and $\phi(y_i)=6$. 

\begin{enumerate}
\item Then there is a simplicial complex $L$ such that $SR(K,\phi)=A(n,L)$
if and only if $I_K$ is generated by products of elements $y_j$. In other words there are no relations between the $x_i$ or between the $x_i$ and $y_j$. 
\item There is a graph $G$ such that  $SR(K,\phi)=A(n,G)$ if and only if in addition to (1) for every $i<j<k$, $y_iy_jy_k\in I_K$. 
\end{enumerate}
\end{lemma}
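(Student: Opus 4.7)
The plan is straightforward: analyse the minimal non-faces of the join $\Delta^{n-1}\ast L$ for part (1), then translate the one-dimensionality of a graph into a condition on the ideal for part (2).

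For part (1), I would start by describing the face structure of the join. A subset $\alpha \subset V(\Delta^{n-1})\cup V(L)$ is a face of $\Delta^{n-1}\ast L$ if and only if $\alpha\cap V(L)\in L$, since every subset of $V(\Delta^{n-1})$ is a face of the full simplex $\Delta^{n-1}$. Consequently $\alpha$ is a non-face precisely when $\alpha \cap V(L)$ is a non-face of $L$, and a minimal non-face of $\Delta^{n-1}\ast L$ must therefore lie entirely in $V(L)$ and coincide with a minimal non-face of $L$. Thus $I_{\Delta^{n-1}\ast L}$ is generated by square-free monomials in the $y_j$'s, proving the ``only if'' direction. For the converse, given $I_K$ generated by monomials in the $y_j$'s, define $L$ to be the simplicial complex on the vertex set $\{y_1,\dots,y_m\}$ whose minimal non-faces are the vertex sets of these monomial generators. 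A routine verification then gives $I_K = I_{\Delta^{n-1}\ast L}$ inside $\zZ[x_1,\dots,x_n,y_1,\dots,y_m]$, and $SR(K,\phi)\cong A(n,L)$ because the grading matches by construction.

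For part (2), assuming (1) holds so that $SR(K,\phi)=A(n,L)$ for some $L$, I would simply note that a simplicial complex $L$ on $\{y_1,\dots,y_m\}$ is (the simplicial complex of) a graph exactly when $L$ contains no $2$-simplex, i.e.\ every three-element subset $\{y_i,y_j,y_k\}$ with $i<j<k$ is a non-face of $L$. Equivalently the monomial $y_iy_jy_k$ lies in $I_L=I_K$ for every $i<j<k$. Combining with (1) gives the biconditional for (2).

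There is no real obstacle here: the argument is essentially unwinding the definitions of the join, of Stanley--Reisner ideal, and of what it means for a simplicial complex to be at most one-dimensional. The only point that deserves care is distinguishing between ``$y_iy_jy_k$ belongs to $I_K$'' (which is what the statement asks) and ``$y_iy_jy_k$ is a minimal generator of $I_K$'' (which need not hold, for example if some $y_iy_j$ is already in $I_K$); once this is acknowledged the equivalence with ``$L$ has no $2$-simplices'' is immediate.
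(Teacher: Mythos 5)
Your proof is correct: the identification of the minimal non-faces of $\Delta^{n-1}\ast L$ as exactly the minimal non-faces of $L$, the construction of $L$ from the monomial generators in the converse direction, and the translation of ``$L$ has no $2$-simplices'' into ``$y_iy_jy_k\in I_K$ for all $i<j<k$'' are all sound, and your caveat distinguishing membership in $I_K$ from minimal generation is exactly the right point to flag. The paper records this lemma without proof, and your definition-unwinding argument is evidently the one the authors intended, so there is nothing to compare beyond noting that you have correctly filled in the omitted details.
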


\section{Roughly equivalent versions of span colouring}\label{sec:span_coloring}

In this section we fix a graph $G=(V,E)$  and look at some basic properties of three slightly different versions of span colouring of a graph which all depend on our field $\mathbf{k}$. All three give the same associated chromatic number (Proposition \ref{same-chromatic-number}) and are in that sense roughly equivalent. The first version of the span colouring has already been defined in Section \ref{sec:intro}. It is the most naive and easy to understand.  Recall the set of neighbours of $x$, $N(x)$ from Section \ref{sec:notation}

\begin{definition}[Weak span colouring] 
   A  $n$-{\em weak span colouring} of $G$ over $\mathbf{k}$ is a function $f\colon V(G)\rightarrow \mathbf{k}^n\setminus\{0\}$ such that for every $x\in V$, $f(x)\notin \langle f(N(x)) \rangle$.
      
   The {\em weak span chromatic number} of $G$, $s''_{\mathbf{k}}\chi(G)$ is the least $n$ such that $G$ has an $n$-weak span colouring.
\end{definition}

 The second version is similar to the first but instead uses the projective space. 

\begin{definition}[Intermediate Span colouring] 
   A $n$-{\em intermediate span colouring} of $G$ over $\mathbf{k}$ is a function $f\colon G\rightarrow P\mathbf{k}^n$ such that for every $x\in V$, $f(x)\not\subset \langle f(N(x)) \rangle$.
   
   The {\em intermediate span chromatic number} of $G$, $s'_{\mathbf{k}}\chi(G)$ is the least $n$ such that $G$ has an $n$-intermediate span colouring.
\end{definition}

Note that if $\mathbf{k}=\zZ/2$ these two versions of Span colouring are the same since $\zZ/2^n\setminus\{0\}\rightarrow P\zZ/2^n$ is a bijection. The final version of span colouring is a souped up version of the second which will allow us to construct a graph $A_{\mathbf{k}^n}$ such that $Hom(G,A_{\mathbf{k}^n})$ is bijective with $n$-span colourings of $G$ (see Proposition \ref{span_colouring_graph_hom}). On the surface it is more technical. 

\begin{definition}[Span colouring] 
   An $n$-{\em span colouring} of $G$ over $\mathbf{k}$ is a function $f\colon G\rightarrow P\mathbf{k}^n\times \mathrm{Gr}_{n-1}(\mathbf{k}^n)$  such that for $x\in V$ with $f(x)=(U,V)$, $U\not\subset V$, and for every $(x,y)\in E$, with $f(y)=(U',V')$, $U\subset V'$ and $U'\subset V$.

   The {\em span chromatic number} of $G$, $s_{\mathbf{k}}\chi(G)$ is the least $n$ such that $G$ has an $n$-span colouring. When $\mathbf{k}=\zZ/p$ we write $s_{\mathbf{k}}\chi=s_p\chi$. 
\end{definition}

We will show using a sequence of lemmas that all three chromatic numbers are the same. 

\begin{lemma}\label{span-vs-intermediate}
Suppose $f\colon V\rightarrow  P\mathbf{k}^n\times \mathrm{Gr}_{n-1}(\mathbf{k}^n)$ is an $n$-span colouring. Then $p_1\circ f \colon V\rightarrow P\mathbf{k}^n\times \mathrm{Gr}_{n-1}(\mathbf{k}^n) \rightarrow P\mathbf{k}^n$, 
where $p_1$ is projection onto the first factor, is an $n$-intermediate span colouring. Hence $s'_{\mathbf{k}}\chi(G)\leq s_{\mathbf{k}}\chi(G)$. 
\end{lemma}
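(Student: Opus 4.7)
The plan is to unpack definitions and exploit the fact that the second coordinate $V$ of $f(x)=(U,V)$ is built precisely to serve as a hyperplane witnessing that neighbours span a subspace avoiding $U$. So rather than reconstructing anything, the proof just reads off what the span colouring axioms give.

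First I would fix a vertex $x\in V(G)$ with $f(x)=(U,V)$, so $p_1\circ f(x)=U$. The span colouring axiom says $U\not\subset V$, and for every neighbour $y\in N(x)$ with $f(y)=(U',V')$, the edge condition forces $U'\subset V$. Taking the sum over all $y\in N(x)$ gives
\[
\langle p_1\circ f(N(x))\rangle \;=\; \Bigl\langle U' : y\in N(x),\ f(y)=(U',V')\Bigr\rangle \;\subset\; V.
\]
Since $U\not\subset V$ and the right-hand side is contained in $V$, we conclude $U\not\subset \langle p_1\circ f(N(x))\rangle$, which is exactly the $n$-intermediate span colouring condition for $p_1\circ f$.

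This produces an $n$-intermediate span colouring from any $n$-span colouring, so the inequality $s'_{\mathbf{k}}\chi(G)\leq s_{\mathbf{k}}\chi(G)$ follows by minimising over $n$.

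There is no real obstacle in this direction: the $\mathrm{Gr}_{n-1}(\mathbf{k}^n)$-coordinate of a span colouring packages exactly the auxiliary hyperplane one would otherwise have to construct by hand. The subtler direction, presumably addressed in the subsequent lemmas leading to Proposition~\ref{same-chromatic-number}, is the reverse implication, where from a projective colouring one must manufacture consistent choices of hyperplanes $V$ at every vertex so that the compatibility $U'\subset V$ holds along every edge.
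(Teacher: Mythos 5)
Your proof is correct and is essentially identical to the paper's: both fix $x$ with $f(x)=(U,V)$, use the edge condition to get $\langle p_1 f(N(x))\rangle\subset V$, and conclude from $U\not\subset V$ that the intermediate colouring condition holds. Nothing to add.
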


\begin{proof}
Let $p_2\colon P\mathbf{k}^n\times \mathrm{Gr}_{n-1}(\mathbf{k}^n) \rightarrow \mathrm{Gr}_{n-1}(\mathbf{k}^n)$ be projection into the second factor. For $x\in V$, if 
$y\in N(x)$, then $p_1\circ f(y)\subset p_2f(x)$. Hence $\langle p_1f(N(x))\rangle \subset p_2f(x)$. So $p_1f(x)\not\subset \langle p_1f(N(x))\rangle$ and 
$p_1f$ is an intermediate span colouring. 
\end{proof}

\begin{lemma}\label{intermediate-vs-span}
Suppose $f\colon V\rightarrow  P\mathbf{k}^n$ is an $n$-intermediate span colouring. 
Then  there exists an $n$-span colouring $g\colon V\rightarrow  P\mathbf{k}^n\times \mathrm{Gr}_{n-1}(\mathbf{k}^n)$ such that $p_1\circ g=f$. 
Hence $s_{\mathbf{k}}\chi(G)\leq s'_{\mathbf{k}}\chi(G)$. 
\end{lemma}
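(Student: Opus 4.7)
The plan is to construct, for each vertex $x\in V$, a hyperplane $W(x)\in \mathrm{Gr}_{n-1}(\mathbf{k}^n)$ that contains $\langle f(N(x))\rangle$ but does not contain the line $f(x)$, and then set $g(x)=(f(x),W(x))$. This immediately gives $p_1\circ g=f$, so the content is in verifying the span colouring conditions.

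First I would fix a vertex $x$ and construct $W(x)$. Since $f$ is an $n$-intermediate span colouring, $f(x)\not\subset \langle f(N(x))\rangle$. Let $U=\langle f(N(x))\rangle$, a subspace of $\mathbf{k}^n$ of some dimension $k\leq n-1$ whose sum with the line $f(x)$ has dimension $k+1$. Choose a basis $v_1,\dots,v_k$ of $U$, pick $v_{k+1}$ generating $f(x)$, and extend to a basis $v_1,\dots,v_n$ of $\mathbf{k}^n$. Set
\[
W(x)=\langle v_1,\dots,v_k,v_{k+2},\dots,v_n\rangle.
\]
Then $W(x)$ is a hyperplane containing $U$, and $f(x)\not\subset W(x)$ because $v_{k+1}\notin W(x)$.

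With $g(x)=(f(x),W(x))$ defined in this way, the span colouring axioms are straightforward. By construction $f(x)\not\subset W(x)$ for every vertex $x$. For each edge $(x,y)$, since $y\in N(x)$ we have $f(y)\subset \langle f(N(x))\rangle \subset W(x)$; symmetrically $x\in N(y)$ gives $f(x)\subset W(y)$. These are precisely the conditions that make $g$ an $n$-span colouring. Since every $n$-intermediate span colouring gives rise to an $n$-span colouring, we conclude $s_{\mathbf{k}}\chi(G)\leq s'_{\mathbf{k}}\chi(G)$.

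There is essentially no obstacle here: the one observation needed is that the non-containment condition of an intermediate span colouring leaves exactly enough room to extend $\langle f(N(x))\rangle$ to a hyperplane avoiding $f(x)$, after which the symmetry of the neighbour relation handles both edge conditions at once. The only mildly subtle point is that the data of $g$ packages one hyperplane per vertex which must simultaneously serve all incident edges; the argument above shows that this is automatic because $\langle f(N(x))\rangle$ already contains every $f(y)$ for $y\in N(x)$.
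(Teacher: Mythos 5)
Your proposal is correct and follows the same route as the paper: both choose for each vertex $x$ a hyperplane containing $\langle f(N(x))\rangle$ but not $f(x)$, set $g(x)=(f(x),W(x))$, and verify the edge conditions from $f(y)\subset\langle f(N(x))\rangle\subset W(x)$. Your explicit basis-extension argument for the existence of such a hyperplane is a detail the paper leaves implicit, but the proof is otherwise identical.
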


\begin{proof}
For $x\in V$, let $U_x\in \mathrm{Gr}_{n-1}(\mathbf{k}^n)$ be any $n-1$ dimensional subspace such that  $\langle f(N(x)) \rangle\subset U_x$ and $f(x)\not\subset U_x$, and $g\colon V\rightarrow  P\mathbf{k}^n\times \mathrm{Gr}_{n-1}(\mathbf{k}^n)$ be given by $g(x)=(f(x), U_x)$. 
Then $g$ is an $n$-span colouring.  To see this 
let $x\in V$, and we just need to check that for $y\in N(x)$, $f(y)\subset U_x$. This follows since $f(y)\subset \langle f(N(x)) \rangle\subset U_x$ and $g$ is an $n$-span colouring. 
\end{proof}

The proofs of the next two lemmas we leave as straightforward checks for the reader. 

\begin{lemma}\label{weak-vs-intermediate}
Suppose $f\colon V\rightarrow \mathbf{k}^n\setminus\{0\}$  is an $n$-weak span colouring. Let $g\colon  V\rightarrow  P\mathbf{k}^n$ be given by $g(x)=\langle f(x)\rangle$. 
Then $g$ is an $n$-intermediate span colouring. 
Hence $s'_{\mathbf{k}}\chi(G)\leq s''_{\mathbf{k}}\chi(G)$. 
\end{lemma}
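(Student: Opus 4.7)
The plan is to unpack the two definitions and check that the condition translates directly. First, I would observe that $g$ is well-defined as a map into $P\mathbf{k}^n$: since $f(x)\neq 0$, the subspace $\langle f(x)\rangle$ is genuinely one-dimensional and hence an element of $P\mathbf{k}^n=\mathrm{Gr}_1(\mathbf{k}^n)$.

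Next, I would fix an arbitrary vertex $x\in V$ and show that $g(x)\not\subset \langle g(N(x))\rangle$. The key identity is
\[
\langle g(N(x))\rangle \;=\; \Bigl\langle \bigcup_{y\in N(x)}\langle f(y)\rangle\Bigr\rangle \;=\; \langle f(N(x))\rangle,
\]
because the span of a union of one-dimensional subspaces is the span of any nonzero generators of those subspaces. Now, since $f$ is an $n$-weak span colouring, $f(x)\notin \langle f(N(x))\rangle$. If $\langle f(x)\rangle\subset \langle f(N(x))\rangle$ held, then in particular $f(x)$ itself would lie in $\langle f(N(x))\rangle$, contradicting the weak span condition. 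Therefore $g(x)=\langle f(x)\rangle\not\subset \langle f(N(x))\rangle=\langle g(N(x))\rangle$, which is exactly the intermediate span colouring condition.

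Finally, the inequality $s'_{\mathbf{k}}\chi(G)\leq s''_{\mathbf{k}}\chi(G)$ follows formally: any $n$-weak span colouring produces, via the construction above, an $n$-intermediate span colouring, so the minimum $n$ admitting the latter is no larger than the minimum $n$ admitting the former.

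There is no real obstacle here; as the authors indicate, this is a straightforward check. The only point that requires even a moment's thought is the identity $\langle g(N(x))\rangle=\langle f(N(x))\rangle$, which is immediate once one recalls that spans are unaffected by replacing generators with nonzero scalar multiples (and in particular by the passage from a vector to the line it spans).
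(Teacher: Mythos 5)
Your proof is correct and is exactly the straightforward check the paper intends: the paper omits the proof of this lemma ("we leave as straightforward checks for the reader"), and your argument — the identity $\langle g(N(x))\rangle=\langle f(N(x))\rangle$ plus the observation that containment of the line $\langle f(x)\rangle$ would force $f(x)\in\langle f(N(x))\rangle$ — is the intended verification.
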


\begin{lemma}\label{intermediate-vs-weak}
Suppose $f\colon V\rightarrow P\mathbf{k}^n$ is an $n$-internediate span colouring. For each $x\in V$, pick $v_x\in \mathbf{k}^n$ such that 
$f(x)=\langle v\rangle$, and let $g\colon  V\rightarrow \mathbf{k}^n\setminus\{0\} $ be given by $g(x)=v_x$. Then 
$g$ is an $n$-weak span colouring. 
 Hence $s''_{\mathbf{k}}\chi(G)\leq s'_{\mathbf{k}}\chi(G)$. 
 \end{lemma}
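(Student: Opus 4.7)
The plan is to verify directly that the function $g$ defined from $f$ by picking vector representatives satisfies the weak span colouring condition, with the only input being that $f$ satisfied the intermediate span colouring condition.

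First I would observe that $g$ is well-defined as a map into $\mathbf{k}^n\setminus\{0\}$: since $f(x)\in P\mathbf{k}^n$ is a line, any spanning vector $v_x$ is nonzero. Next, I would fix a vertex $x\in V$ and aim to show $g(x)\notin \langle g(N(x))\rangle$. The argument is by contradiction. Suppose $v_x = g(x) \in \langle g(N(x))\rangle = \langle v_y : y\in N(x)\rangle$.

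The key step is the containment $\langle v_y : y\in N(x)\rangle \subset \langle f(N(x))\rangle$, which holds because each $v_y$ lies in the line $f(y)$, and $f(y)$ is one of the subspaces whose union spans $\langle f(N(x))\rangle$. Combining this with the hypothesis gives $v_x\in \langle f(N(x))\rangle$, hence $f(x) = \langle v_x\rangle \subset \langle f(N(x))\rangle$, contradicting the assumption that $f$ is an intermediate span colouring. Therefore $g(x)\notin\langle g(N(x))\rangle$ for every $x\in V$, so $g$ is an $n$-weak span colouring.

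For the inequality, I would note that starting from any $n$-intermediate span colouring $f$, the above construction produces an $n$-weak span colouring $g$ using the same $n$. So if $n = s'_{\mathbf{k}}\chi(G)$, then $G$ admits an $n$-weak span colouring, giving $s''_{\mathbf{k}}\chi(G)\leq n = s'_{\mathbf{k}}\chi(G)$. There is no substantive obstacle here; the only thing to be careful about is not conflating a set of vectors with the set of lines they span, which is handled by the explicit containment step above.
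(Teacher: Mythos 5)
Your proof is correct and is exactly the straightforward direct verification the paper intends (the paper explicitly leaves this lemma as a check for the reader). The key containment $\langle g(N(x))\rangle \subset \langle f(N(x))\rangle$ is the right observation, and the contradiction argument is sound.
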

 
 The last four lemmas give us, 

\begin{proposition}\label{same-chromatic-number}
$s_{\mathbf{k}}\chi(G)=s'_{\mathbf{k}}\chi(G)=s''_{\mathbf{k}}\chi(G)$.
\end{proposition}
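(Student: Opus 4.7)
The proposition asserts equality of three invariants, and the four preceding lemmas have already done all the real work by establishing the four inequalities
\[
s'_{\mathbf{k}}\chi(G)\leq s_{\mathbf{k}}\chi(G),\qquad s_{\mathbf{k}}\chi(G)\leq s'_{\mathbf{k}}\chi(G),
\]
\[
s'_{\mathbf{k}}\chi(G)\leq s''_{\mathbf{k}}\chi(G),\qquad s''_{\mathbf{k}}\chi(G)\leq s'_{\mathbf{k}}\chi(G),
\]
coming from Lemmas \ref{span-vs-intermediate}, \ref{intermediate-vs-span}, \ref{weak-vs-intermediate}, and \ref{intermediate-vs-weak} respectively. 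So my plan is simply to cite these four lemmas and chain the inequalities.

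Concretely, Lemmas \ref{span-vs-intermediate} and \ref{intermediate-vs-span} together give $s_{\mathbf{k}}\chi(G)=s'_{\mathbf{k}}\chi(G)$, via the construction that forgets the second factor of a span colouring to get an intermediate one, and conversely augments an intermediate colouring by choosing, at each vertex $x$, a hyperplane $U_x$ containing $\langle f(N(x))\rangle$ but not $f(x)$ (which is possible because $f$ was an intermediate span colouring). Similarly, Lemmas \ref{weak-vs-intermediate} and \ref{intermediate-vs-weak} give $s'_{\mathbf{k}}\chi(G)=s''_{\mathbf{k}}\chi(G)$, using that sending a nonzero vector to the line it spans, or conversely lifting each line to any nonzero vector on it, converts between weak and intermediate colourings without changing the parameter $n$.

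Since each lemma is a ``same $n$'' statement — an $n$-colouring of one type is transformed into an $n$-colouring of another — the minima agree, yielding $s_{\mathbf{k}}\chi(G)=s'_{\mathbf{k}}\chi(G)=s''_{\mathbf{k}}\chi(G)$. There is no genuine obstacle here; the proof is a one-line concatenation of four previously established inequalities. The only thing worth flagging is the asymmetry in the definitions (the span version carries extra Grassmannian data, and the weak version uses vectors rather than lines), which was handled once and for all inside the lemmas by the ``choose $U_x$'' and ``choose $v_x$'' constructions.
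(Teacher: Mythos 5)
Your proposal is correct and matches the paper exactly: the paper also deduces the proposition immediately from the four preceding lemmas, which establish the two pairs of opposite inequalities. Nothing further is needed.
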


Having proved the last proposition we only use the notation $s_{\mathbf{k}}\chi$ and discard the other two notations for (what we now know are all) span chromatic number.  Although the chomatic numbers are the same, the number of colourings of each type is slightly different.

\begin{remark}
From Lemma \ref{intermediate-vs-weak} we see that to get from an intermediate span colouring 
to a weak span colouring for each vertex we have to make a choice of a generator for the 
one dimensional subspace corresponding that vertex. Suppose $\mathbf{k}$ is finite 
with  $q$ elements, then there are $(q-1)^{|V|}$ weak span colourings for every intermediate span colouring. 

However the relation between intermediate span colourings and span colourings is more complicated. Lemma \ref{intermediate-vs-span} tells us that to get a span colouring from an intermediate one we have to extend 
$\langle f(N(x))\rangle$ to a certain kind of $n-1$ dimensional subspace. The number of ways of doing this only depends on all the dimensions of the $\langle f(N(x))\rangle$, and these might depend on the particular intermediate colouring. 

\begin{figure}[H]
   \begin{tabular}{cc}
      \begin{minipage}[t]{0.45\hsize}
         \begin{center}
            \begin{tikzpicture}[x=1cm, y=1cm, thick]
            \fill(0:1.9) circle[radius=2.5pt];
            \fill(60:1.9) circle[radius=2.5pt];
            \fill(120:1.9) circle[radius=2.5pt];
            \fill(180:1.9) circle[radius=2.5pt];
            \fill(240:1.9) circle[radius=2.5pt];
            \fill(300:1.9) circle[radius=2.5pt];
    
            \draw(0:1.9)--(60:1.9)--(120:1.9)--(180:1.9)--(240:1.9)--(300:1.9)--(0:1.9);
            \draw(120:1.9)--(240:1.9);

            \draw(0:1.9) node[right]{$\langle e_2\rangle$};
            \draw(60:1.9) node[right]{$\langle e_1\rangle$};
            \draw(120:1.9) node[left]{$\langle e_2\rangle$};
            \draw(180:1.9) node[left]{$\langle e_1\rangle$};
            \draw(240:1.9) node[left]{$\langle e_3\rangle$};
            \draw(300:1.9) node[right]{$\langle e_1\rangle$};
         \end{tikzpicture}
         \caption{}
      \end{center}
      \end{minipage}
      \begin{minipage}[t]{0.45\hsize}
         \begin{center}
         \begin{tikzpicture}[x=1cm, y=1cm, thick]
            \fill(0:1.9) circle[radius=2.5pt];
            \fill(60:1.9) circle[radius=2.5pt];
            \fill(120:1.9) circle[radius=2.5pt];
            \fill(180:1.9) circle[radius=2.5pt];
            \fill(240:1.9) circle[radius=2.5pt];
            \fill(300:1.9) circle[radius=2.5pt];
    
            \draw(0:1.9)--(60:1.9)--(120:1.9)--(180:1.9)--(240:1.9)--(300:1.9)--(0:1.9);
            \draw(120:1.9)--(240:1.9);

            \draw(0:1.9) node[right]{$\langle e_1\rangle$};
            \draw(60:1.9) node[right]{$\langle e_3\rangle$};
            \draw(120:1.9) node[left]{$\langle e_2\rangle$};
            \draw(180:1.9) node[left]{$\langle e_1\rangle$};
            \draw(240:1.9) node[left]{$\langle e_3\rangle$};
            \draw(300:1.9) node[right]{$\langle e_2\rangle$};
         \end{tikzpicture}
         \caption{}
      \end{center}
      \end{minipage}
   \end{tabular}
\end{figure}

Consider the graphs above with the line associated to each vertex being the subspace written beside the vertex. This gives two different  $3$-intermediate span colourings of a graph where in the first colouring $\dim \langle f(N(x))\rangle=2$ for each vertex  while in the second colouring there are two vertices with $\dim \langle f(N(x))\rangle=1$. Thus we see there is exactly one $3$-span colouring corresponding to the first intermediate colouring but $q^2$,
$3$-span colourings corresponding to the second. The number of ways to extend a one dimensional subspace in $\mathbf{k}^3$ to a two dimensional one while still avoiding non-trivial intersection with another one dimensional subspace is $q$, so making such a choice for two vertices gives $q^2$. 

For all intermediate span colourings of a graph satisfying
$\dim \langle f(N(x))\rangle=s_{\mathbf{k}}\chi(G)-1$ for each $x\in V$ seems like some kind of regularity property of the graph. For example could it be that vertex transitive graphs have this property. Of course these properties could also depend on the field. 
\end{remark}

\section{A graph representing span colourings}\label{sec:span_coloringgraph}

Again for this section we fix a graph $G=(V,E)$ and now we also fix a field $\mathbf{k}$. We construct a graph $A_{\mathbf{k}^n}$ which represents $n$-span colourings. In other words such that $\Hom( G, A_{\mathbf{k}^n})$ is in bijection with the $n$-span colourings of $G$. 

Let $K_n$ denote the complete graph on the vertex set $[n]$.
Given a map $f\colon G \rightarrow K_n$, the map $V(f)\colon V(G)\rightarrow V(K_n)=[n]$ is an $n$-colouring of $G$ and also $\theta\colon \Hom(G,K_n)\rightarrow \{Colourings \ of \ G\} $ defined by $\theta(f)=V(f)$ is a bijection. 
We would like to construct a similar graph for span colourings. 

We define a graph $A_{\mathbf{k}^n}$ as follows;
\begin{align*}
   V(A_{\mathbf{k}^n})=&\{(U,V)\in P\mathbf{k}^n\times \mathrm{Gr}_{n-1}(\mathbf{k}^n)| U\not\subset V\},\\
   E(A_{\mathbf{k}^n})=&\{((U,V), (U',V'))| U\subset V', U'\subset V\}.
\end{align*} 
Given a map $f\colon G\rightarrow A_{\mathbf{k}^n}$ for $v\in G$ we can write $f(v)=(U_v, V_v)$. 
Then it is easy to check that $f\colon V\rightarrow P\mathbf{k}^n\times \mathrm{Gr}_{n-1}(\mathbf{k}^n)$, 
is an $n$-span colouring of $G$. 
As before we can define $\theta\colon \Hom(G,A_{\mathbf{k}^n})\rightarrow \{n\text{-span colourings of } G\}$, 
$\phi(f)=V(f)$.

\begin{remark}
The complement of $A_{\mathbf{k}^n}$ in  $P\mathbf{k}^n\times \mathrm{Gr}_{n-1}(\mathbf{k}^n)$ is the partial flag manifold $Fl=\{ U\subset V \subset \mathbf{k}^n | dim(U)=1, dim(V)=n-1\}$. This is 
$Fl= (P\mathbf{k}^n\times \mathrm{Gr}_{n-1}(\mathbf{k}^n))\setminus A_{\mathbf{k}^n}$. When $\mathbf{k}$ is finite this gives the equation $|Fl|+ |A_{\mathbf{k}^n}|=|(P\mathbf{k}^n\times \mathrm{Gr}_{n-1}(\mathbf{k}^n))|$.

\end{remark}

The following two results follow directly.

\begin{proposition}\label{span_colouring_graph_hom}
   $\phi\colon \Hom(G,A_{\mathbf{k}^n})\rightarrow \{n\text{-span colourings of } G\}$ is a bijection. 
\end{proposition}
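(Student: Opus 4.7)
The plan is to observe that Proposition \ref{span_colouring_graph_hom} is essentially a matter of definition chasing: the graph $A_{\mathbf{k}^n}$ has been engineered so that its vertex set encodes the local condition on a span colouring at each vertex, and its edge set encodes the local condition at each edge. Thus I expect no real obstacle, only careful bookkeeping.

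First I would verify that $\phi$ is well-defined. Given $f\in\Hom(G,A_{\mathbf{k}^n})$, the vertex map $V(f)$ lands in $V(A_{\mathbf{k}^n})\subset P\mathbf{k}^n\times\mathrm{Gr}_{n-1}(\mathbf{k}^n)$, so for each $x\in V(G)$ with $V(f)(x)=(U,V)$ we have $U\not\subset V$ by the very definition of $V(A_{\mathbf{k}^n})$. Moreover, for each edge $(x,y)\in E(G)$, the pair $(V(f)(x),V(f)(y))$ lies in $E(A_{\mathbf{k}^n})$, which with $V(f)(x)=(U,V)$ and $V(f)(y)=(U',V')$ is precisely the statement that $U\subset V'$ and $U'\subset V$. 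These are exactly the two conditions required of an $n$-span colouring, so $V(f)$ is a span colouring.

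Next I would argue injectivity of $\phi$: a graph homomorphism from $G$ into any graph is determined by its underlying vertex map, so $V(f_1)=V(f_2)$ forces $f_1=f_2$.

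Finally, for surjectivity, given an $n$-span colouring $g\colon V(G)\to P\mathbf{k}^n\times\mathrm{Gr}_{n-1}(\mathbf{k}^n)$, the vertex condition $U\not\subset V$ at each $x$ ensures $g$ factors through $V(A_{\mathbf{k}^n})$, and the edge condition on $g$ says exactly that $(g(x),g(y))\in E(A_{\mathbf{k}^n})$ whenever $(x,y)\in E(G)$. Hence $g$ extends (uniquely) to a graph homomorphism $f\colon G\to A_{\mathbf{k}^n}$ with $V(f)=g$, which gives $\phi(f)=g$. Combining these three observations yields the bijection.
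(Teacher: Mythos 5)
Your proof is correct and is exactly the definition-chase the paper intends: the paper simply asserts that the proposition "follows directly" from the construction of $A_{\mathbf{k}^n}$, whose vertex and edge sets were defined to mirror the vertex and edge conditions of a span colouring. Your three-step verification (well-definedness, injectivity via the vertex map determining the homomorphism, surjectivity via the colouring conditions certifying a graph homomorphism) fills in precisely the routine details the paper leaves to the reader.
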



\begin{corollary}\label{weak_span_colouring_graph_hom}
   $s_\mathbf{k}\chi(G)\leq n$ if and only if $\Hom(G,A_{\mathbf{k}^n})\neq \emptyset$. 
\end{corollary}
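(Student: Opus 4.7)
The plan is to deduce this directly from Proposition \ref{span_colouring_graph_hom}, which establishes the bijection $\phi\colon \Hom(G,A_{\mathbf{k}^n})\rightarrow \{n\text{-span colourings of } G\}$. The corollary is really just a translation of the bijection into a statement about non-emptiness.

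First, I would unwind the definition of $s_\mathbf{k}\chi(G)$: by definition, $s_\mathbf{k}\chi(G)\leq n$ is equivalent to the assertion that $G$ admits at least one $n$-span colouring, i.e.\ that the set $\{n\text{-span colourings of } G\}$ is non-empty. Then, since $\phi$ is a bijection between $\Hom(G,A_{\mathbf{k}^n})$ and this set, one side is empty if and only if the other is. This gives both implications at once.

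The only mildly non-trivial direction to spell out, if one wanted to be more concrete without invoking the bijection as a black box, is the forward direction: given an $n$-span colouring $f\colon V(G)\rightarrow P\mathbf{k}^n\times \mathrm{Gr}_{n-1}(\mathbf{k}^n)$, one verifies directly from the definitions of $A_{\mathbf{k}^n}$ and of span colouring that $f$ factors through $V(A_{\mathbf{k}^n})$ (the condition $U\not\subset V$ at each vertex) and that every edge of $G$ is sent to an edge of $A_{\mathbf{k}^n}$ (the conditions $U\subset V'$ and $U'\subset V$ at each adjacent pair). Conversely, any graph homomorphism $G\to A_{\mathbf{k}^n}$ manifestly satisfies the span colouring conditions by the definition of the edge set $E(A_{\mathbf{k}^n})$. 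Since no new ideas are required beyond Proposition \ref{span_colouring_graph_hom}, there is no real obstacle — the corollary is essentially formal.
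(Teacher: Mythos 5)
Your proposal is correct and matches the paper, which simply states that the corollary ``follows directly'' from the bijection of Proposition \ref{span_colouring_graph_hom}. The only point you gloss over (as does the paper) is the implicit monotonicity that an $m$-span colouring for $m<n$ yields an $n$-span colouring, needed to identify ``$s_\mathbf{k}\chi(G)\leq n$'' with ``an $n$-span colouring exists''; this is routine and does not affect the argument.
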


   We observe some basic properties about the chromatic numbers.
The following lemma follows directly from the definition of $A_{\mathbf{k}^n}$.
\begin{lemma}\label{clique_A_k}
Let $\{a_1, \cdots , a_n\}$ be a basis for $\mathbf{k}^n$. 
Then $\{ (\langle a_j \rangle, \langle a_1, \cdots \widehat{a_j} \cdots,  a_n \rangle)| 1\leq j \leq n  \}$ is an n-clique in $A_{\mathbf{k}^n}$. 
\end{lemma}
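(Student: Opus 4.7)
The plan is to verify directly from the definitions of $V(A_{\mathbf{k}^n})$ and $E(A_{\mathbf{k}^n})$ that the proposed $n$ vertices lie in $A_{\mathbf{k}^n}$, are pairwise distinct, and are pairwise adjacent. Everything will reduce to the linear independence of the basis $\{a_1,\dots,a_n\}$.

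First, I would check that each $v_j := (\langle a_j\rangle, \langle a_1,\dots,\widehat{a_j},\dots,a_n\rangle)$ actually belongs to $V(A_{\mathbf{k}^n})$. The second coordinate is spanned by $n-1$ elements of a basis, hence is $(n-1)$-dimensional, and it does not contain $a_j$ (again by linear independence of the full basis). Therefore $\langle a_j\rangle\not\subset \langle a_1,\dots,\widehat{a_j},\dots,a_n\rangle$, satisfying the condition $U\not\subset V$. Moreover, the $v_j$ are pairwise distinct because their first coordinates $\langle a_j\rangle$ are distinct lines in $\mathbf{k}^n$.

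Next, I would verify the edge condition. Fix $j\neq k$ and set $v_j=(U_j,V_j)$ and $v_k=(U_k,V_k)$ with the obvious interpretations. Since $j\neq k$, the vector $a_j$ appears in the list defining $V_k=\langle a_1,\dots,\widehat{a_k},\dots,a_n\rangle$, so $U_j=\langle a_j\rangle\subset V_k$. Symmetrically, $a_k$ appears in the spanning set of $V_j$, so $U_k\subset V_j$. Both conditions defining $E(A_{\mathbf{k}^n})$ are therefore satisfied, and $(v_j,v_k)$ is an edge.

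Combining the two previous paragraphs, $\{v_1,\dots,v_n\}$ is a set of $n$ distinct vertices of $A_{\mathbf{k}^n}$ with an edge between every pair, i.e., an $n$-clique. There is no genuine obstacle here: the statement is a direct unpacking of the definition of $A_{\mathbf{k}^n}$ against the linear independence of a basis.
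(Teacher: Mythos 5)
Your proof is correct and is exactly the direct verification the paper has in mind: the paper states this lemma with no proof, remarking only that it ``follows directly from the definition of $A_{\mathbf{k}^n}$,'' and your unpacking of the vertex condition, distinctness, and edge condition via linear independence is precisely that check.
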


Observe that the $n$-clique of the last lemma only depends on the lines containing the $a_i$. Then 
the clique given by  $\{a_1, \cdots , a_n\}$ determines an unordered configuration of $n$ points in $P\mathbf{k}^n$. 
The list of cliques in the lemma will be those configurations that span $\mathbf{k}^n$. 
Put another way if we start out with an ordered basis we can quotient out by free
$(\mathbf{k}^*)^n$ and symmetric group actions to we get the set of 
n-cliques as a subset of the configuration space. We will use the diagonal part of the free $(\mathbf{k}^*)^n$ action in the next section.

\begin{lemma}
Suppose $\{(U_1,V_1), \dots, (U_k, V_k)\}$ is a clique in $A_{\mathbf{k}^n}$. Then
$dim \langle U_1, \dots, U_k\rangle=k$.  If $k=n$ then $V_i=\langle U_1 \dots, \widehat{U_i}, \cdots,  U_n\rangle$.  
\end{lemma}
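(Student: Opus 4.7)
The plan is to extract from the edge and vertex axioms the two facts one needs: the containment $\langle U_j : j \neq i\rangle \subset V_i$ (from edges) and the non-containment $U_i \not\subset V_i$ (from vertices), then play them off each other to force linear independence.

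First I would note that for any two distinct vertices $(U_i, V_i)$ and $(U_j, V_j)$ of the clique, the edge relation gives $U_j \subset V_i$. Taking the union over $j \neq i$, we get
\[
\langle U_j : j \neq i \rangle \subset V_i.
\]
Now suppose for contradiction that $\dim \langle U_1, \dots, U_k\rangle < k$. Since each $U_\ell$ is a line, one of them, say $U_i$, must lie in the span of the others, i.e.\ $U_i \subset \langle U_j : j \neq i\rangle$. Combined with the containment above, this yields $U_i \subset V_i$, contradicting the vertex condition $U_i \not\subset V_i$. Hence $\dim \langle U_1, \dots, U_k\rangle = k$.

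For the second statement, suppose $k = n$. Apply the first part to the sub-clique $\{(U_j, V_j) : j \neq i\}$ of size $n-1$ to obtain $\dim \langle U_j : j \neq i\rangle = n-1$. On the other hand, $\dim V_i = n-1$ by definition of $\mathrm{Gr}_{n-1}(\mathbf{k}^n)$, and we have already seen $\langle U_j : j \neq i\rangle \subset V_i$. Equality of dimensions forces
\[
V_i = \langle U_1, \dots, \widehat{U_i}, \dots, U_n\rangle,
\]
as required.

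Since the whole argument is just an unpacking of the two defining axioms of $A_{\mathbf{k}^n}$, there is no real obstacle; the only thing to be careful about is choosing to single out one $U_i$ when running the dependency argument, so that the non-containment axiom $U_i \not\subset V_i$ applies to the correct index.
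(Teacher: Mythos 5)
Your proof is correct and follows essentially the same route as the paper: both arguments derive $\langle U_j : j\neq i\rangle \subset V_i$ from the edge condition and play it against $U_i\not\subset V_i$ to get linear independence, then use the containment together with $\dim V_i = n-1$ to force equality in the case $k=n$. Your version merely phrases the independence step as a proof by contradiction and spells out a few details the paper leaves implicit.
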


\begin{proof}
Starting with the first part $U_i\not\in \langle U_1,  \cdots \widehat{U_i} \cdots,  U_k \rangle$ so the 
$U_i$ are linearly independent so $\dim\langle (U_1,V_1), \dots, (U_k, V_k)\rangle=k $

Using the first part for the second part 
$\langle U_1,  \cdots \widehat{U_i} \cdots,  U_k \rangle\subset V_i$ and both have dimension $n-1$ so they are equal. 
\end{proof}

The last two lemmas show the next two.

\begin{proposition}\label{cliqueAn}
$clique(A_{\mathbf{k}^n})=n$. 
\end{proposition}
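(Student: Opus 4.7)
The plan is to prove the equality by combining the two preceding lemmas, establishing a lower bound of $n$ and an upper bound of $n$ separately.

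For the lower bound, I would simply invoke Lemma \ref{clique_A_k}: choose any basis $\{a_1, \ldots, a_n\}$ of $\mathbf{k}^n$ (for instance the standard basis $\{e_1, \ldots, e_n\}$), and then $\{(\langle a_j \rangle, \langle a_1, \ldots, \widehat{a_j}, \ldots, a_n \rangle) \mid 1 \leq j \leq n\}$ is an explicit $n$-clique in $A_{\mathbf{k}^n}$. Hence $\text{clique}(A_{\mathbf{k}^n}) \geq n$.

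For the upper bound, I would use the lemma immediately preceding the proposition. Let $\{(U_1, V_1), \ldots, (U_k, V_k)\}$ be any clique in $A_{\mathbf{k}^n}$. That lemma asserts $\dim \langle U_1, \ldots, U_k \rangle = k$. But $\langle U_1, \ldots, U_k \rangle$ is a subspace of $\mathbf{k}^n$, so $k \leq n$. Therefore every clique has size at most $n$, giving $\text{clique}(A_{\mathbf{k}^n}) \leq n$.

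Combining the two bounds yields $\text{clique}(A_{\mathbf{k}^n}) = n$. There is no real obstacle here: both directions are immediate consequences of the two lemmas that have already been proved, and the proposition is essentially a packaging statement. The content was in verifying the dimension formula in the preceding lemma; the proposition itself is a one-line corollary.
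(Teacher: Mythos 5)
Your proof is correct and is exactly the argument the paper intends: the lower bound comes from the explicit clique of Lemma \ref{clique_A_k}, and the upper bound from the dimension count $\dim \langle U_1,\dots,U_k\rangle = k \leq n$ in the immediately preceding lemma. The paper simply states that ``the last two lemmas show the next two'' without spelling this out, so your write-up supplies the same one-line packaging the authors had in mind.
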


\begin{proposition}
All $n$-cliques in $A_{\mathbf{k}^n}$ are of the form in Lemma \ref{clique_A_k}. 
\end{proposition}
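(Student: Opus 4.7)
The plan is to simply assemble the ingredients already laid out in the preceding lemma. Suppose we are given an arbitrary $n$-clique $\{(U_1,V_1), \dots, (U_n, V_n)\}$ in $A_{\mathbf{k}^n}$. The goal is to exhibit a basis $\{a_1, \ldots, a_n\}$ of $\mathbf{k}^n$ such that $(U_i, V_i) = (\langle a_i \rangle, \langle a_1, \ldots, \widehat{a_i}, \ldots, a_n\rangle)$ for each $i$.

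First I would invoke the lemma immediately preceding the proposition. Since $\dim \langle U_1, \dots, U_n \rangle = n$, the one-dimensional subspaces $U_1, \ldots, U_n$ together span $\mathbf{k}^n$ and are in particular pairwise distinct and linearly independent (in the sense that picking any nonzero generator $a_i \in U_i$ produces a linearly independent list). So I would choose, once and for all, a generator $a_i$ of each line $U_i$; the resulting set $\{a_1, \ldots, a_n\}$ is then automatically a basis of $\mathbf{k}^n$, and $U_i = \langle a_i \rangle$ by construction.

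For the second coordinate, the same preceding lemma (the $k=n$ clause) tells us directly that $V_i = \langle U_1, \ldots, \widehat{U_i}, \ldots, U_n \rangle = \langle a_1, \ldots, \widehat{a_i}, \ldots, a_n \rangle$. Putting the two pieces together yields exactly the description from Lemma~\ref{clique_A_k}, which completes the proof.

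There is essentially no obstacle here: all the content is already packaged in the preceding dimension lemma, and the proposition is just the observation that this lemma pins down not only the dimensions but the actual subspaces $V_i$ on the nose once the generators $a_i$ of the lines $U_i$ are selected. The only subtlety worth flagging is that the basis $\{a_1, \ldots, a_n\}$ is not canonical — it depends on the choice of a nonzero vector in each line — but the clique itself is independent of this choice, so the characterisation in Lemma~\ref{clique_A_k} is well-defined up to this ambiguity.
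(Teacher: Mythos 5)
Your argument is correct and is exactly the one the paper intends: the paper simply states that ``the last two lemmas show the next two,'' and the content you spell out --- choosing a generator $a_i$ of each line $U_i$, noting the dimension lemma makes these a basis, and reading off $V_i$ from the $k=n$ clause --- is precisely that deduction. No differences to report.
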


\begin{proposition}\label{important2}
   For any graph $G$, $clique(G)\leq s_{\mathbf{k}}\chi(G)$. 
\end{proposition}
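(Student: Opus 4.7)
The plan is to use the representing graph $A_{\mathbf{k}^n}$ machinery developed in this section, which makes the proof essentially immediate.

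First I would set $n = s_{\mathbf{k}}\chi(G)$. By Corollary \ref{weak_span_colouring_graph_hom}, there exists a graph homomorphism $f\colon G \to A_{\mathbf{k}^n}$. Next, suppose $\{v_1,\ldots,v_k\}$ is a clique in $G$ realizing $clique(G)=k$. Since graph homomorphisms preserve adjacency, $f(v_i)$ and $f(v_j)$ are adjacent in $A_{\mathbf{k}^n}$ whenever $i\neq j$. Because $A_{\mathbf{k}^n}$ is a simple graph (no loops), adjacent vertices are distinct, hence the vertices $f(v_1),\ldots,f(v_k)$ are pairwise distinct and pairwise adjacent in $A_{\mathbf{k}^n}$, i.e.\ they form a $k$-clique in $A_{\mathbf{k}^n}$. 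Applying Proposition \ref{cliqueAn} gives $k\leq clique(A_{\mathbf{k}^n}) = n = s_{\mathbf{k}}\chi(G)$, which is the desired inequality.

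There is no real obstacle here: the whole content of the proposition has been packaged into the two earlier results (Corollary \ref{weak_span_colouring_graph_hom} and Proposition \ref{cliqueAn}), and the only thing that needs checking is the trivial fact that a graph homomorphism between simple graphs restricts to an injection on any clique. If one prefers to avoid invoking the representing graph, an equally short direct argument works: given an $n$-weak span colouring $f$ and a clique $\{v_1,\ldots,v_k\}$, the defining condition $f(v_i)\notin \langle f(N(v_i))\rangle \supset \langle f(v_j):j\neq i\rangle$ for every $i$ rules out any nontrivial linear relation among $f(v_1),\ldots,f(v_k)$, so these are linearly independent vectors in $\mathbf{k}^n$, forcing $k\leq n$. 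Either formulation yields the bound in one short paragraph.
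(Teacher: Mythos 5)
Your proof is correct and follows the same route as the paper: pass a maximum clique of $G$ through the homomorphism $G\to A_{\mathbf{k}^n}$ furnished by Corollary \ref{weak_span_colouring_graph_hom} and apply $clique(A_{\mathbf{k}^n})=n$ from Proposition \ref{cliqueAn}. The added remark on injectivity on cliques and the alternative direct linear-independence argument are both fine but not needed.
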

\begin{proof}
Follows since $clique(A_{\mathbf{k}^n})=n$. If $ s_{\mathbf{k}}\chi(G)=n$ then there is a map 
$f\colon G\rightarrow A_{\mathbf{k}^n}$ so $f$ of any clique in $G$ is a clique in $A_{\mathbf{k}^n}$ so 
$clique(G)\leq clique(A_{\mathbf{k}^n})=n$. 
\end{proof}

\begin{proposition}\label{numberofAn}
$s_{\mathbf{k}}\chi(A_{\mathbf{k}^n})=n$. 
\end{proposition}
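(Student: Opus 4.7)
The plan is to prove the equality by sandwiching $s_{\mathbf{k}}\chi(A_{\mathbf{k}^n})$ between $n$ on both sides, using results already established in this section.

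For the lower bound $s_{\mathbf{k}}\chi(A_{\mathbf{k}^n}) \geq n$, I would invoke Proposition \ref{important2} together with Proposition \ref{cliqueAn}. The first gives $clique(G) \leq s_{\mathbf{k}}\chi(G)$ for every graph $G$, and the second tells us that $clique(A_{\mathbf{k}^n}) = n$. Applying the inequality with $G = A_{\mathbf{k}^n}$ yields the lower bound immediately.

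For the upper bound $s_{\mathbf{k}}\chi(A_{\mathbf{k}^n}) \leq n$, the key observation is that the identity map $\mathrm{id}\colon A_{\mathbf{k}^n} \to A_{\mathbf{k}^n}$ is tautologically a graph homomorphism, so $\Hom(A_{\mathbf{k}^n}, A_{\mathbf{k}^n}) \neq \emptyset$. By Corollary \ref{weak_span_colouring_graph_hom}, this implies $s_{\mathbf{k}}\chi(A_{\mathbf{k}^n}) \leq n$. Combining the two bounds gives the equality.

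No real obstacle arises here: the proposition is essentially a sanity check confirming that the representing graph $A_{\mathbf{k}^n}$ has the expected span chromatic number, and both directions reduce to one-line applications of prior results. The only thing worth being careful about is that the lower bound genuinely uses the nontrivial computation $clique(A_{\mathbf{k}^n}) = n$ from Proposition \ref{cliqueAn} (which in turn relies on exhibiting an explicit $n$-clique via a basis, as in Lemma \ref{clique_A_k}); the upper bound, on the other hand, is formal from the universal property of $A_{\mathbf{k}^n}$ with respect to $n$-span colourings.
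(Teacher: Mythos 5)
Your proposal is correct and follows exactly the paper's own argument: the paper's proof likewise cites Propositions \ref{cliqueAn} and \ref{important2} for the lower bound and Corollary \ref{weak_span_colouring_graph_hom} (via the identity homomorphism) for the upper bound. You have simply spelled out the details that the paper leaves implicit.
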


\begin{proof}
This follows from Propositions \ref{cliqueAn} and \ref{important2} and Corollary \ref{weak_span_colouring_graph_hom}.
\end{proof}

\begin{proposition}\label{important}
   For any graph $G$, $s_{\mathbf{k}}\chi(G)\leq \chi(G)$. 
\end{proposition}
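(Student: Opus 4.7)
The plan is to convert an ordinary $n$-colouring of $G$ into an $n$-(weak) span colouring via the standard basis, which by Proposition \ref{same-chromatic-number} suffices to bound $s_{\mathbf{k}}\chi(G)$.

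First I would let $n = \chi(G)$ and fix a proper colouring $c\colon V(G) \to \{1, 2, \dots, n\}$, so that $c(x) \neq c(y)$ whenever $(x,y) \in E$. Using the standard basis $\{e_1, \dots, e_n\}$ of $\mathbf{k}^n$, I would define $f\colon V(G) \to \mathbf{k}^n \setminus \{0\}$ by $f(v) = e_{c(v)}$.

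Next I would verify that $f$ is an $n$-weak span colouring. For any $x \in V(G)$ and any $y \in N(x)$, we have $c(y) \neq c(x)$, so $f(y) = e_{c(y)} \in \langle e_i : i \neq c(x) \rangle$. Therefore $\langle f(N(x)) \rangle \subset \langle e_i : i \neq c(x) \rangle$, which does not contain $e_{c(x)} = f(x)$. Hence $f(x) \notin \langle f(N(x)) \rangle$, as required.

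Finally I would invoke Proposition \ref{same-chromatic-number} to conclude $s_{\mathbf{k}}\chi(G) = s''_{\mathbf{k}}\chi(G) \leq n = \chi(G)$. Alternatively, one can phrase the same argument through the representing graph: Lemma \ref{clique_A_k} provides an $n$-clique in $A_{\mathbf{k}^n}$, i.e.\ a graph homomorphism $K_n \to A_{\mathbf{k}^n}$; composing with a homomorphism $G \to K_n$ given by a proper $n$-colouring gives an element of $\Hom(G, A_{\mathbf{k}^n})$, and then Corollary \ref{weak_span_colouring_graph_hom} yields the bound. There is no serious obstacle here—the proof is essentially immediate once the three notions of span colouring have been identified, and the only choice to make is whether to phrase it directly in terms of weak span colourings or via the representing graph.
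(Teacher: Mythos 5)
Your proposal is correct and matches the paper's own proof, which likewise gives both arguments: sending each vertex to the (line spanned by the) standard basis vector indexed by its colour and invoking Proposition \ref{same-chromatic-number}, and alternatively composing $G\rightarrow K_n$ with the clique embedding $K_n\rightarrow A_{\mathbf{k}^n}$ from Lemma \ref{clique_A_k}. The only cosmetic difference is that you verify the weak span colouring condition while the paper phrases it as an intermediate span colouring.
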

\begin{proof} We describe two ways of showing this. 

   Given a colouring $f\colon G\rightarrow [n]$ we can consider it as a map $g\colon G\rightarrow P\mathbf{k}^n$ by taking $g(v)$ the one dimensional subspace generated by the basis element $e_{f(v)}$. 
   This gives an intermediate $n$-span colouring. Finally apply Proposition \ref{same-chromatic-number}.

   If $\chi(G)=n$ there is a graph homomorphism $G\rightarrow K_n$, and by Lemma \ref{clique_A_k}, there is a graph homomorphism $K_n\rightarrow A_{\mathbf{k}^n}$. Composing we get $G\rightarrow  A_{\mathbf{k}^n}$ and
   thus by Lemma \ref{span_colouring_graph_hom}, $s_{\mathbf{k}}\chi(G)\leq n$. 
\end{proof}
These two chromatic numbers are equal for small values. Note that $K_0=A_{\mathbf{k}^0}=\emptyset$ is the null graph and 
$K_1=A_{\mathbf{k}^1}$ is one point. Thus only the null graph has chromatic number or span chromatic number $0$, and only discrete graphs have chromatic or span chromatic number $1$. Lets see what happens when the chromatic numbers are $2$. 
\begin{proposition}\label{2_dim_chromatic}
    For any field $\mathbf{k}$, $\chi(A_{\mathbf{k}^2})=2$ and so $\chi(G)=2$ if and only if $s_\mathbf{k} \chi(G)=2$.
\end{proposition}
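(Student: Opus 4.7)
The plan is to first compute $\chi(A_{\mathbf{k}^2})$ directly by analysing the structure of this graph, which becomes trivially simple when $n=2$, and then use Corollary \ref{weak_span_colouring_graph_hom} together with Proposition \ref{important} to deduce the equivalence $\chi(G)=2 \Leftrightarrow s_{\mathbf{k}}\chi(G)=2$.

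The key observation is that when $n=2$ we have $\mathrm{Gr}_{n-1}(\mathbf{k}^2) = \mathrm{Gr}_1(\mathbf{k}^2) = P\mathbf{k}^2$, so both coordinates of a vertex $(U,V)\in V(A_{\mathbf{k}^2})$ are lines in $\mathbf{k}^2$, and the constraint $U\not\subset V$ reduces (by dimension) to $U\neq V$. Now examine the edge condition: $((U,V),(U',V'))$ is an edge iff $U\subset V'$ and $U'\subset V$, but since all four are one-dimensional, these inclusions are equalities $U=V'$ and $U'=V$. Therefore the unique neighbour of a vertex $(U,V)$ is $(V,U)$, so $A_{\mathbf{k}^2}$ is a disjoint union of copies of $K_2$, one for each unordered pair $\{U,V\}$ of distinct lines. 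Such a graph is $2$-colourable (colour $(U,V)$ using any fixed total order on $P\mathbf{k}^2$ to distinguish it from $(V,U)$), and it contains at least one edge as long as $P\mathbf{k}^2$ has two distinct points, so $\chi(A_{\mathbf{k}^2})=2$.

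With $\chi(A_{\mathbf{k}^2})=2$ in hand, the equivalence is almost immediate. If $\chi(G)=2$ then Proposition \ref{important} gives $s_{\mathbf{k}}\chi(G)\leq 2$, and since $\chi(G)=2$ forces $G$ to contain an edge, Proposition \ref{important2} gives $s_{\mathbf{k}}\chi(G)\geq clique(G)\geq 2$, hence equality. Conversely, if $s_{\mathbf{k}}\chi(G)=2$ then by Corollary \ref{weak_span_colouring_graph_hom} there is a graph homomorphism $G\to A_{\mathbf{k}^2}$; composing with a $2$-colouring $A_{\mathbf{k}^2}\to K_2$ yields a $2$-colouring of $G$, so $\chi(G)\leq 2$, and $s_{\mathbf{k}}\chi(G)=2$ already forces $G$ to have an edge (otherwise it would be discrete and have span chromatic number $\leq 1$), so $\chi(G)\geq 2$.

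There is essentially no obstacle here; the whole content is the degeneration of the edge condition of $A_{\mathbf{k}^n}$ when $n=2$ turns inclusions of one-dimensional subspaces into equalities. Everything else is formal from Propositions \ref{important}, \ref{important2} and Corollary \ref{weak_span_colouring_graph_hom}.
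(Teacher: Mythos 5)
Your proof is correct and follows essentially the same route as the paper: both identify that the edge condition degenerates when $n=2$ so that $(U,V)$ is joined only to $(V,U)$, making $A_{\mathbf{k}^2}$ a disjoint union of copies of $K_2$, and then deduce the equivalence by composing graph homomorphisms through $A_{\mathbf{k}^2}$ and $K_2$. Your version merely spells out the degenerate edge cases (existence of an edge, discrete graphs) a bit more explicitly than the paper does.
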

\begin{proof}
   For any field each vertex of $A_{\mathbf{k}^2}$ has degree $1$ with $(U,V)$ connected to $(V,U)$. 
   The implies $A_{\mathbf{k}^2}$ is a disjoint union of $K_2$. Hence $\chi(A_{\mathbf{k}^2})=2$. The second statement follows by arguing as in the second proof of the last proposition.
   
\end{proof}
\begin{example}
   The graph $A_{(\zZ/2)^2}$ is the following.
   \begin{figure}[H]
      {
      \begin{tikzpicture}[x=0.7cm, y=0.7cm, thick]
        \draw(0,0)--(4,0);
        \draw(0,2)--(4,2);
        \draw(0,4)--(4,4);
        \fill(0,0) circle[radius=2.5pt];
        \fill(0,2) circle[radius=2.5pt];
        \fill(0,4) circle[radius=2.5pt];
        \fill(4,0) circle[radius=2.5pt];
        \fill(4,2) circle[radius=2.5pt];
        \fill(4,4) circle[radius=2.5pt];
        \draw(-5,4) node[right]{$((1,0),\langle(0,1)\rangle)$};
        \draw(9,4) node[left]{$((0,1),\langle(1,0)\rangle)$};
        \draw(-5,2) node[right]{$((1,0),\langle(1,1)\rangle)$};
        \draw(9,2) node[left]{$((1,1),\langle(1,0)\rangle)$};
        \draw(-5,0) node[right]{$((1,1),\langle(0,1)\rangle)$};
        \draw(9,0) node[left]{$((0,1),\langle(1,1)\rangle)$};
      \end{tikzpicture}}
    \end{figure}
\end{example}

Note that the clique number of an odd cycle is $2$, but its chromatic and span-chromatic numbers are $3$. 
In general the two chromatic numbers $\chi$ and $s_\mathbf{k} \chi$ do not coincide.
We will prove this in the next section.







\section{Chromatic number of $A_{\mathbf{k}^n}$}\label{sec:chromatic_number}

For this section we let $\mathbf{k}$ be a finite field of cardinality $q$.

For a basis $B=\{a_1, \cdots , a_n\}$  of $\mathbf{k}^n$, we let $f_B\colon K_n\rightarrow A_{\mathbf{k}^n}$ be the map corresponding to the clique of Lemma \ref{clique_A_k}.

Let $\mathcal B$ be the set of bases of $\mathbf{k}^n$and $F'=\coprod_{B\in \mathcal{B}} f_B \colon \coprod_{B\in \mathcal{B}} K_n \rightarrow A_{\mathbf{k}^n}$.

Notice that $\mathcal B$ has a free action of $\mathbf{k}^*$ given by coordinate wise multiplication, 
$\alpha\cdot \{a_1, \cdots , a_n\}=\{\alpha a_1, \cdots , \alpha a_n\}$, and let $\overline{\mathcal B}$ denote the quotient. 
    Since $f_B(\alpha\cdot B)=f_B(B)$ we get an extension of $F'$ to 
    $F=\coprod_{B\in \overline{\mathcal{B}}}\overline{f}_B \colon \coprod_{B\in \overline{\mathcal{B}}} K_n \rightarrow A_{\mathbf{k}^n}$. 
    Note there is a larger free action of $(\mathbf{k}^*)^n$ on $\mathcal B$, but we quotient out by the smaller actions since it makes the formulas slightly easier. Quotienting out by the larger action may give better results.

Next something well known, 

\begin{lemma}\label{basis_count}
$|\mathcal{B}|=\frac{1}{n!}\prod_{i=0}^{n-1}(q^{n}-q^i)$
\end{lemma}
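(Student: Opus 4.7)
The plan is to count ordered bases first and then pass to unordered bases by dividing by $n!$. The key observation is that the elements of any basis of $\mathbf{k}^n$ are automatically distinct (since they must be linearly independent, hence in particular nonzero and pairwise non-proportional is not required, but distinctness follows from linear independence of a list of $n$ vectors), so each unordered basis corresponds to exactly $n!$ distinct orderings.

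To count ordered bases $(a_1, \ldots, a_n)$ of $\mathbf{k}^n$, I would build them greedily: the first vector $a_1$ can be any nonzero vector, contributing $q^n - 1 = q^n - q^0$ choices; having chosen a linearly independent $a_1, \ldots, a_i$, the next vector $a_{i+1}$ must lie outside the $i$-dimensional subspace $\langle a_1, \ldots, a_i\rangle$, which contains exactly $q^i$ elements, leaving $q^n - q^i$ choices. Multiplying across $i = 0, 1, \ldots, n-1$ gives $\prod_{i=0}^{n-1}(q^n - q^i)$ ordered bases, and dividing by $n!$ for the number of orderings of each unordered basis yields the claimed formula.

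There is no real obstacle here; the only subtlety is the bookkeeping convention for $\mathcal{B}$ as a set of unordered bases (as suggested by the set notation $B = \{a_1, \ldots, a_n\}$ in the text), which is what forces the factor of $\tfrac{1}{n!}$.
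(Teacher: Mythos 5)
Your proposal is correct and follows essentially the same route as the paper: count ordered bases (the paper identifies them with invertible $n\times n$ matrices, whose count $\prod_{i=0}^{n-1}(q^n-q^i)$ is obtained by exactly the greedy argument you spell out) and then divide by $n!$ to pass to unordered bases, using that the vectors of a basis are distinct. The only difference is that you derive the ordered count directly rather than quoting the order of $GL_n(\mathbf{k})$, which is a matter of exposition, not substance.
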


\begin{proof}
Ordered bases are in bijection with $n$ by $n$ matrices with non-zero determinant this 
gives $\prod_{i=0}^{n-1}(q^{n}-q^i)$, next going from ordered to unordered divides the result by $n!$. 
\end{proof}

\begin{lemma}\label{inverseAn}
For any $(U,V)\in V(A_{\mathbf{k}^n})$, $|F^{-1}((U,V))|=\frac{1}{(n-1)!}\prod_{i=0}^{n-2}(q^{n-1}-q^i)$
\end{lemma}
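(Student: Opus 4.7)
The plan is to interpret $F^{-1}((U,V))$ concretely by exploiting that each $\overline{f}_B$ is injective on vertices, reducing the count to a basis-counting problem on the hyperplane $V$ and the line $U$, and finally accounting for the $\mathbf{k}^*$-quotient.

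First I would observe that on the copy of $K_n$ labelled by $[B] \in \overline{\mathcal{B}}$, the map $\overline{f}_{[B]}$ is a bijection from $V(K_n)$ onto the $n$-clique of Lemma \ref{clique_A_k} associated to $B$. Hence vertices of the source mapping to $(U,V)$ are in bijection with the classes $[B] \in \overline{\mathcal{B}}$ whose clique contains $(U,V)$. Since $U$ is a line, $V$ is a hyperplane, and $U \not\subset V$, we have $U \cap V = 0$ and $U \oplus V = \mathbf{k}^n$. Consequently, the clique of $B = \{a_1,\dots,a_n\}$ contains $(U,V)$ if and only if exactly one $a_j$ lies in $U\setminus\{0\}$ and the remaining $n-1$ vectors form a basis of $V$ (the $a_j$ is unique because the other $a_i$'s lie in $V$, and $U \cap V = 0$).

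Next I would count ordered data. There are $q-1$ choices for a nonzero vector spanning $U$, and $\prod_{i=0}^{n-2}(q^{n-1}-q^i)$ ordered bases of the $(n-1)$-dimensional space $V$. This produces $(q-1)\prod_{i=0}^{n-2}(q^{n-1}-q^i)$ ordered tuples $(u, b_1,\dots,b_{n-1})$. Each unordered basis $B$ with the required property arises from exactly $(n-1)!$ such tuples, since the distinguished vector in $U$ is forced by $U\cap V = 0$, and the remaining data only differs by permutation of the $V$-basis. Thus the number of unordered bases in $\mathcal{B}$ whose clique contains $(U,V)$ equals
\[
   \frac{(q-1)\prod_{i=0}^{n-2}(q^{n-1}-q^i)}{(n-1)!}.
\]

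Finally I would pass to $\overline{\mathcal{B}} = \mathcal{B}/\mathbf{k}^*$. The diagonal $\mathbf{k}^*$-action preserves both $\langle a_j\rangle$ and $\langle a_1,\dots,\widehat{a_j},\dots,a_n\rangle$, so it preserves the property of the clique containing $(U,V)$; it is also free on $\mathcal{B}$, so orbits have size $q-1$. Dividing by $q-1$ gives
\[
   |F^{-1}((U,V))| = \frac{1}{(n-1)!}\prod_{i=0}^{n-2}(q^{n-1}-q^i),
\]
as desired. The main subtlety is the bookkeeping in the middle step: checking that the ``distinguished'' vector in $U$ is recoverable from the unordered basis (which is immediate from $U\cap V = 0$) so that the $(n-1)!$ overcounting factor is correct, rather than $n!$.
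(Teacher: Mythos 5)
Your proof is correct and follows essentially the same route as the paper: count the unordered bases whose associated clique contains $(U,V)$ via the $q-1$ choices spanning $U$ and the $\frac{1}{(n-1)!}\prod_{i=0}^{n-2}(q^{n-1}-q^i)$ unordered bases of $V$, then quotient by the free diagonal $\mathbf{k}^*$-action. Your added care in checking that each copy of $K_n$ contributes at most one preimage vertex and that the overcounting factor is $(n-1)!$ rather than $n!$ is a welcome refinement but not a different argument.
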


\begin{proof}
Given a $1$-dimensional subspace $U\subset  \mathbf{k}^n$ and an $(n-1)$-dimensional subspace $V\subset  \mathbf{k}^n$ such that $U\cap V=0$. We first count the number of unordered bases $\{ a_1, \cdots, a_n\}$ such that $\langle a_1 \rangle=U$ and $\langle a_2, \cdots , a_n \rangle =V$. There are $q-1$ choices that will give $U$ and using Lemma 
\ref{basis_count} there are $\frac{1}{(n-1)!}\prod_{i=0}^{n-2}(q^{n-1}-q^i)$ choices that will give $V$, and so there are $\frac{q-1}{(n-1)!}\prod_{i=0}^{n-2}(q^{n-1}-q^i)$ total choices. Quotienting out by the free $\mathbf{k}^*$ action gives the result. 

\end{proof}

\begin{lemma}\label{divides}
Suppose there is a map $g\colon A_{\mathbf{k}^n} \rightarrow K_n$, then $n|(q^n-1)q^{n-1}$.  
\end{lemma}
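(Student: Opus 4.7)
The plan is to prove the sharper statement that $n$ divides $|V(A_{\mathbf{k}^n})|$ and then to deduce the claim from the explicit count $|V(A_{\mathbf{k}^n})|=\frac{(q^n-1)q^{n-1}}{q-1}$. The key ingredient is the uniformity of the fibres of $F$ recorded in Lemma \ref{inverseAn}.

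First, I would observe that by Lemma \ref{clique_A_k} each $\bar{f}_B\colon K_n\rightarrow A_{\mathbf{k}^n}$ is an injective graph homomorphism onto an $n$-clique. Post-composing with $g$ yields a graph homomorphism $g\circ \bar{f}_B\colon K_n\rightarrow K_n$ between complete graphs on $n$ vertices, which must be a bijection. Hence, for every colour $c\in [n]$ and every $B\in \overline{\mathcal{B}}$, there is exactly one $i\in [n]$ with $g(\bar{f}_B(i))=c$.

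Second, I would double count the set $S_c=\{(B,i)\in \overline{\mathcal{B}}\times [n]\mid g(\bar{f}_B(i))=c\}$ for a fixed colour $c$. Summing over $B$ first gives $|S_c|=|\overline{\mathcal{B}}|$ by the previous paragraph. Re-indexing by $v=\bar{f}_B(i)\in V(A_{\mathbf{k}^n})$ and using that $|F^{-1}(v)|=N$ is independent of $v$ by Lemma \ref{inverseAn}, the same count becomes $|S_c|=|g^{-1}(c)|\cdot N$. Thus $|g^{-1}(c)|=|\overline{\mathcal{B}}|/N$ does not depend on $c$, so the $n$ colour classes of $g$ all have the same size. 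Since they partition $V(A_{\mathbf{k}^n})$, this forces $n\mid |V(A_{\mathbf{k}^n})|$.

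Finally, I would compute $|V(A_{\mathbf{k}^n})|$ directly: there are $\frac{q^n-1}{q-1}$ lines in $\mathbf{k}^n$, and for each line $U$ the number of hyperplanes $V$ with $U\not\subset V$ equals the total number of hyperplanes $\frac{q^n-1}{q-1}$ minus those containing $U$, which by passage to $\mathbf{k}^n/U$ equals $\frac{q^{n-1}-1}{q-1}$. Subtracting gives $q^{n-1}$, so $|V(A_{\mathbf{k}^n})|=\frac{(q^n-1)q^{n-1}}{q-1}$, and $n\mid|V(A_{\mathbf{k}^n})|$ implies $n\mid (q^n-1)q^{n-1}$. There is no serious obstacle here; the only content is the equidistribution of colour classes, which is delivered directly by the fibrewise uniformity in Lemma \ref{inverseAn}.
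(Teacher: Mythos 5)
Your proof is correct and follows essentially the same route as the paper: both arguments rest on the clique covering $F$, the forced bijectivity of $g\circ \bar{f}_B\colon K_n\rightarrow K_n$, and the uniform fibre count of Lemma \ref{inverseAn} to show that the colour classes of $g$ all have the same size. The only difference is cosmetic --- the paper divides $|\mathcal{B}|$ by the fibre size and cancels the two product formulas directly, whereas you package the same equidistribution as $n\mid |V(A_{\mathbf{k}^n})|$ and finish with a direct count of the vertices, which is a marginally cleaner way to close.
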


\begin{proof}
Since every graph homomorphism $K_n\rightarrow K_n$ is an isomorphism, the composition $g \circ F\colon \coprod_{B\in \mathcal{B}} K_n \rightarrow K_n$, must be an isomorphism when restricted to each $K_n$ in the coproduct. Hence using 
Lemma \ref{basis_count}
we see that 
for any $x\in V(K_n)$, $|(g\circ F)^{-1}(x)|=|\mathcal{B}|=\frac{1}{n!}\prod_{i=0}^{n-1}(q^{n}-q^i)=\frac{1}{n!}(q^n-1)(q^{n-1})\prod_{i=0}^{n-2}(q^{n-1}-q^i)$. 

Next using Lemma \ref{inverseAn}, 
\begin{align*}
|(g\circ F)^{-1}(x)|=&\Sigma_{(U,V)\in g^{-1}(x)} F^{-1}(U, V)\\
=&\Sigma_{(U,V)\in g^{-1}(x)} \frac{1}{(n-1)!}\prod_{i=0}^{n-2}(q^{n-1}-q^i)\\
=&|g^{-1}(x)|\frac{1}{(n-1)!}\prod_{i=0}^{n-2}(q^{n-1}-q^i).
\end{align*}

So we get that $\frac{1}{(n-1)!}\prod_{i=0}^{n-2}(q^{n-1}-q^i)$ divides $\frac{1}{n!}(q^n-1)(q^{n-1})\prod_{i=0}^{n-2}(q^{n-1}-q^i)$, multiplying both by $n!$ and canceling terms we get the result. 
\end{proof}

\begin{proposition}\label{cong}
   Let $p$ by a prime. If $q\not\equiv 0,1 \ (mod \ p)$ then there are no maps $A_{\mathbf{k}^p}\rightarrow K_p$ and so $\chi(A_{\mathbf{k}^p})>p=s_p\chi(A_{\mathbf{k}^p})$. 
\end{proposition}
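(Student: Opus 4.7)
The plan is to apply Lemma \ref{divides} with $n = p$ and then use the primality of $p$ together with Fermat's little theorem to unpack the divisibility condition.

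First I would assume, aiming at the contrapositive, that there exists a graph homomorphism $g\colon A_{\mathbf{k}^p} \to K_p$. Lemma \ref{divides} then gives $p \mid (q^p - 1) q^{p-1}$. Since $p$ is prime, Euclid's lemma forces either $p \mid q^{p-1}$ or $p \mid q^p - 1$.

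In the first case, $p \mid q^{p-1}$ together with the primality of $p$ gives $p \mid q$, i.e.\ $q \equiv 0 \pmod p$. In the second case, Fermat's little theorem yields $q^p \equiv q \pmod p$, hence $q^p - 1 \equiv q - 1 \pmod p$, and so $p \mid q - 1$, i.e.\ $q \equiv 1 \pmod p$. Taking the contrapositive, the hypothesis $q \not\equiv 0, 1 \pmod p$ rules out both possibilities, so no homomorphism $A_{\mathbf{k}^p} \to K_p$ can exist.

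Finally I would translate this conclusion into the chromatic inequality. The nonexistence of a graph homomorphism $A_{\mathbf{k}^p} \to K_p$ says exactly that $A_{\mathbf{k}^p}$ admits no proper $p$-colouring, so $\chi(A_{\mathbf{k}^p}) > p$. Combined with Proposition \ref{numberofAn}, which gives $s_p \chi(A_{\mathbf{k}^p}) = p$, this yields $\chi(A_{\mathbf{k}^p}) > p = s_p \chi(A_{\mathbf{k}^p})$, as desired. There is no real obstacle here: the substantive counting has already been done in Lemma \ref{divides}, and what remains is the elementary number-theoretic extraction above.
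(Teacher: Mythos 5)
Your proposal is correct and follows essentially the same route as the paper: apply Lemma \ref{divides} with $n=p$, use primality of $p$ and Fermat's little theorem to derive $q\equiv 0$ or $1 \pmod p$ from the divisibility $p\mid (q^p-1)q^{p-1}$, and then invoke Proposition \ref{numberofAn} for the equality $s_p\chi(A_{\mathbf{k}^p})=p$. The only cosmetic difference is that you phrase the argument as a contrapositive while the paper runs it as a direct contradiction; the content is identical.
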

\begin{proof}
   From Lemma \ref{divides} if there were a map $A_{\mathbf{k}^p}\rightarrow K_p$ we would have $p|(q^n-1)q^{n-1}$. 
   Thus if $p\not| q$ then $p|(q^n-1)$. 
   Fermat's Theorem says that  $q^n\equiv q \ (mod \ p)$ and so $q\equiv 1 \ (mod \ p)$.
   Now $q\not\equiv 0,1 \ (mod \ p)$, it is contradiction, and there is no map $A_{\mathbf{k}^p}\rightarrow K_p$ which proves the inequality. The equality is just Proposition \ref{numberofAn}.
\end{proof}

By using these properties we obtain the following.
\begin{proposition}\label{chromatic_An}
   For any odd prime $p$, $\chi(A_{(\zZ/2)^p})>p=s_2\chi(A_{(\zZ/2)^p})$.
\end{proposition}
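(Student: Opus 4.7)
The plan is to recognize this proposition as essentially a direct specialization of Proposition \ref{cong} to the field $\mathbf{k} = \zZ/2$. So I would simply check the hypothesis of Proposition \ref{cong}, namely $q \not\equiv 0, 1 \pmod{p}$, holds in this case, with $q = 2$.

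First I would verify $2 \not\equiv 0 \pmod{p}$: this requires $p \nmid 2$, which holds since $p$ is assumed to be an odd prime, and in particular $p \geq 3$. Next I would verify $2 \not\equiv 1 \pmod{p}$: this requires $p \nmid 1$, which holds for any prime. Having confirmed both congruences fail, Proposition \ref{cong} immediately yields $\chi(A_{(\zZ/2)^p}) > p$, and the equality $p = s_2\chi(A_{(\zZ/2)^p})$ is exactly Proposition \ref{numberofAn} applied to $\mathbf{k} = \zZ/2$.

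There is no real obstacle here; this is a short corollary whose only content is checking the numerical hypothesis. The substantive mathematical work has already been packaged into Lemma \ref{divides} and Proposition \ref{cong}, and this proposition exists mainly to point out the smallest and most natural instance where the chromatic gap $\chi > s_{\mathbf{k}}\chi$ occurs, namely over the binary field.
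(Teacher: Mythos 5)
Your proposal is correct and matches the paper's own proof exactly: the paper likewise observes that $2\not\equiv 0,1 \pmod{p}$ for any odd prime $p$ and invokes Proposition \ref{cong}, whose conclusion already packages both the strict inequality and the equality $p=s_2\chi(A_{(\zZ/2)^p})$ via Proposition \ref{numberofAn}. Nothing further is needed.
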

\begin{proof}
   As $2\not\equiv 0,1 \ (mod \ p)$ for any odd prime, we use Proposition \ref{cong}.
   \end{proof}

\begin{example}\label{smallestexample}
 From the proposition $\chi(A_{(\zZ/2)^3})>3$, and from Proposition \ref{numberofAn}
 $s_{\mathbf{2}}\chi(A_{(\zZ/2)^3})=3$. 
\end{example}

\begin{proposition}\label{chromatic_difference}
   For any finite field $\mathbf{k}$, there is a graph $G$ such that
   \[
      s_{\mathbf{k}}\chi(G)<\chi(G).
   \]
\end{proposition}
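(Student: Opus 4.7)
The plan is to use the graph $G = A_{\mathbf{k}^p}$ for a suitably chosen prime $p$ and then invoke Proposition~\ref{cong} directly. In effect, Proposition~\ref{chromatic_difference} is just Proposition~\ref{cong} stripped of its congruence hypothesis on $q$, so the task reduces to showing that for every finite field $\mathbf{k}$ there exists a prime $p$ satisfying that hypothesis.

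First, I would let $\mathbf{k}$ be a finite field of cardinality $q$, so $q \geq 2$. I then need to produce a prime $p$ with $q \not\equiv 0, 1 \pmod{p}$, i.e.\ with $p \nmid q$ and $p \nmid (q-1)$. Equivalently, I want a prime $p$ that does not divide the integer $q(q-1)$. Since $q(q-1)$ is a fixed positive integer with only finitely many prime divisors, and there are infinitely many primes, such a $p$ exists; for concreteness, any prime $p > q$ works, since such a $p$ cannot divide either $q$ or $q-1$.

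Second, with this choice of $p$, Proposition~\ref{cong} applies to give
\[
   \chi(A_{\mathbf{k}^p}) > p,
\]
and Proposition~\ref{numberofAn} gives
\[
   s_{\mathbf{k}}\chi(A_{\mathbf{k}^p}) = p.
\]
Combining these, the graph $G = A_{\mathbf{k}^p}$ satisfies $s_{\mathbf{k}}\chi(G) < \chi(G)$, as required.

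There is really no serious obstacle here; the content has been packaged into Propositions~\ref{cong} and~\ref{numberofAn}. The only mildly non-trivial point is the elementary number-theoretic observation that some prime avoids the finite set of divisors of $q(q-1)$, which is immediate from infinitude of primes. If one wanted an explicit $p$, one could just take the smallest prime exceeding $q$.
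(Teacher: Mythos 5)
Your proof is correct and follows essentially the same route as the paper: take $G=A_{\mathbf{k}^p}$ for a prime $p$ with $q\not\equiv 0,1 \pmod p$ and combine Propositions~\ref{cong} and~\ref{numberofAn}. The only difference is that you spell out the (elementary) existence of such a prime, e.g.\ any $p>q$, which the paper leaves implicit.
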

\begin{proof}
   Since $q$ is no less than $2$, there is a prime number $p$ such that $q\not\equiv 0,1 \ (mod \ p)$. Then for the case $G=A_{\mathbf{k}^p}$, there is an inequation $s_{\mathbf{k}}\chi(G)=p<\chi(G)$.
\end{proof}

There is also a characteristic $0$ example. 

\begin{example}\label{God}
Consider the two sphere $S^2\subset \mathbb{R}^3$. So $S^2=\{ x\in \mathbb{R}^3| |x|=1\}$. 
We can turn $S^2$ into a graph with 

$$V(S^2)=S^2$$
$$ E(S^2)=\{ (x,y)| x\cdot y=0\}$$

So the neighborhood of $x$, $N(x)$ consists of the great circle orthogonal to $x$. 
Define $f\colon S^2 \rightarrow A_{\mathbb{R}^3}$ by $f(x)=(\langle x\rangle, \langle N(x)\rangle)$. 
This gives a 3-span colouring of $S^2$ over $\mathbb{R}$ and thus $s_{\mathbb{R}}\chi(S^2)=3$. 
On the other hand it is observed in \cite{GZ} that $\chi(S^2)=4$. 

Note this colouring lands in the subgraph $B$ of $A_{\mathbb{R}^3}$ consisting of $(U,V)$ with $U$ and $V$ being orthogonal. Is there a retraction  $A_{\mathbb{R}^3}\rightarrow B$ of this inclusion? In other words do both graphs give colourings with the same chromatic number. 
\end{example}

\section{Span colouring and Steenrod algebra}\label{sec:Span_colouring_and_Steenrod_algebra}

In the rest of the paper, we will mostly be concerned with the realization problem for the algebras $A(n,L)$. In this section 
for any graph $G$, we show that Steenrod algebra actions on $A(n,G)$ 
give rise to $n$-span colourings on $G$ (Theorem 
\ref{Steenrod_alg_chromatic_number}). First we recall the Steenrod Algebra.

\subsection{The Steenrod Algebra and its action on Stanley-Reisner rings}

For a prime $p$, let $\SA_p$ denote the mod-$p$ Steenrod algebra. 
Let $A^*$ be a commutative graded algebra with unit over $\zZ/p$.
When $A^*$ has an action of $\SA_p$, we call $A^*$ an algebra over the (mod-$p$) Steenrod algebra.
Unless noted we will assume the prime is $2$. 

For an odd prime number $p$, an algebra over the mod-$p$ Steenrod algebra $A^*$ is {\em unstable} if and only if for all homogeneous elements $x\in A^{2d}$, there is an equation
\[
   \beta^\epsilon \Pe^{k}(x)=
      \begin{cases}
          x^p \quad &(2k=2d,\epsilon=0)\\
          0 \quad &(2k+\epsilon>2d),
      \end{cases}
\]
where $\mathcal{P}^k$ is the Steenrod power operation and $\beta$ is the Bockstein homomorphism.
Similarly we can define an algebra over the mod-$2$ Steenrod algebra $A^*$ is {\em unstable} if and only if for all homogeneous elements $x\in A^{d}$, there is an equation
\[
      \Sq^{k}(x)=\begin{cases}
        x^2 \quad &k=d\\
        0 \quad &k>d,
    \end{cases}
\]
where $\Sq^k$ is the Steenrod square.
The interaction of the multiplication of $A^*$ and the action is described by the Cartan formula:

\[
\Pe^{k}(xy)=\Sigma_{0\leq i\leq k} \Pe^i (x) \cup\Pe^{k-i}(y)
\]

for odd characteristic and, 

\[
\Sq^{k}(xy)=\Sigma_{0\leq i\leq k} \Sq^i (x) \cup\Sq^{k-i}(y)
\]

for the prime $2$, where $\Pe^0=\Sq^0=id$. We assume all our actions of $\SA_p$ are unstable and satisfy the Cartan formula. 
Please see \cite{AW,HJS} for more information.

\subsection{Steenrod action on Stanley-Reisner and $P_{\max}(K)$}

Let $K$ be a simplicial complex and $\phi\colon V(K)\rightarrow 2\zN$. We regard $K$ as a poset and
recall $P_{\max}(K)$ is the subposet of $K$ such that for $\sigma \in K$, $\sigma \in P_{\max}(K)$ if and only if there exist maximal simplices $\sigma_1, \dots \sigma_n \in K$ such that $\bigcap \sigma_i =\sigma$.

The following lemma is equivalent to Lemma 6.2 in \cite{Ta}.
\begin{lemma}\label{quotient_to_sum_of_maximal_cell}
   Let $SR(K,\phi)$ be a graded Stanley-Reisner ring for a simplicial complex $K$ with the vertex set $V$ and $\phi\colon V\rightarrow 2\zZ_{>0}$.
   We assume that for a prime number $p$, $SR(K,\phi)\otimes \zZ/p$ has an $\SA_p$ action. 
   Then for $\sigma \in P_{\max}(K)$, the ideal $(V\setminus \sigma)$ in $SR(K,\phi)\otimes \zZ/p$ preserves the action of the mod-$p$ Steenrod algebra.

    Also for any $U\subset P_{\max}(K)$ , the ideal 
   $
     \bigcap_{\tau \in U}(V\setminus \tau)
   $
   in $SR(K,\phi)\otimes \zZ/p$ preserves the action of the mod-$p$ Steenrod algebra.

\end{lemma}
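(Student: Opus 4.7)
The plan is to deduce both statements from the special case where $\sigma$ is itself a maximal simplex, using purely formal closure properties of sub-$\SA_p$-modules to reduce everything to a single hard fact borrowed from \cite[Lemma~6.2]{Ta}, to which, as noted just above, the present lemma is equivalent.

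I would first dispose of the second statement. An arbitrary intersection of sub-$\SA_p$-modules of a Steenrod-algebra-module is again a sub-$\SA_p$-module, so once the first statement is known, each $(V\setminus \tau)$ with $\tau \in U \subset P_{\max}(K)$ is Steenrod-invariant, and $\bigcap_{\tau \in U}(V\setminus \tau)$ inherits invariance immediately.

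Next I would reduce the first statement to the case where $\sigma$ is itself a maximal simplex. If $\sigma \in P_{\max}(K)$, then by definition of $P_{\max}(K)$ there exist maximal simplices $\sigma_1,\dots,\sigma_n \in K$ with $\sigma = \sigma_1 \cap \cdots \cap \sigma_n$, so that $V\setminus \sigma = \bigcup_i (V\setminus \sigma_i)$. At the level of ideals this gives
\[
(V\setminus \sigma) \;=\; \sum_{i=1}^{n} (V\setminus \sigma_i).
\]
A finite sum of sub-$\SA_p$-modules is a sub-$\SA_p$-module, so invariance of each $(V\setminus \sigma_i)$ implies invariance of $(V\setminus \sigma)$.

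It remains to treat a maximal simplex $\sigma$, and this is the only non-formal step. The key observation is the intrinsic description
\[
(V \setminus \sigma) \;=\; \mathrm{Ann}\bigl(\textstyle\prod_{v \in \sigma} v\bigr)
\]
inside $SR(K,\phi)\otimes \zZ/p$: maximality of $\sigma$ forces $v \cdot \prod_{w \in \sigma} w \in I_K$ for every $v \notin \sigma$, because $\sigma \cup \{v\}$ strictly contains the maximal simplex $\sigma$ and so cannot be a face, while $\prod_{v \in \sigma} v$ has nonzero image in the integral domain $SR(K,\phi)\otimes \zZ/p \,/\, (V\setminus \sigma) \cong \zZ/p[\sigma]$. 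The fact that this annihilator is Steenrod-closed is the content of \cite[Lemma~6.2]{Ta}; the underlying argument applies a Steenrod operation to the relation $x \cdot \prod_{w \in \sigma} w = 0$, expands by the Cartan formula, and inducts on the operation degree using control of $\SA_p$ applied to $\prod_{w \in \sigma} w$. This last step is the main obstacle: the combinatorial reductions above are formal, whereas Steenrod-closure of the annihilator of the top monomial requires the substantive inductive argument imported from the prior work of the second author.
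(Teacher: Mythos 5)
Your proof is correct and takes essentially the same route as the paper's: both reduce to the case of a maximal simplex via the intersection description of $P_{\max}(K)$ (the paper does this generator-by-generator plus the Cartan formula, you do it by writing $(V\setminus\sigma)=\sum_i(V\setminus\sigma_i)$), both dispose of the second statement by stability of intersections of sub-$\SA_p$-modules, and both import the maximal-simplex case as a black box from Takeda's earlier work (the paper cites Lemma~6.1 of \cite{Ta} for $\theta(x)\in(V\setminus\sigma_j)$ when $x\notin\sigma_j$ with $\sigma_j$ maximal, whereas you cite Lemma~6.2). Your identification of $(V\setminus\sigma)$ with $\mathrm{Ann}\bigl(\prod_{v\in\sigma}v\bigr)$ for maximal $\sigma$ is a correct but optional repackaging of that imported step.
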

\begin{proof}
   There are maximal cells $\sigma_1,\sigma_2,\dots \sigma_n$ such that $\sigma=\sigma_1\cap \sigma_2 \cap \dots \cap \sigma_n$.
   For $x_i\in V\setminus \sigma$ there is $1 \leq j\leq n$ such that $x_i\in V\setminus \sigma_j$.
   By Lemma 6.1 in \cite{Ta}, for any element of the Steenrod algebra $\theta$, $\theta(x_i)$ is in $(V\setminus \sigma_i)\subset (V\setminus \sigma)$.
   Thus the generators in $(V\setminus \sigma)$ preserve the action of the mod-$p$ Steenrod algebra, and using the Cartan formula the ideal $(V\setminus \sigma)$ preserves the action showing the first statement. 
   
   Therefore the intersection of them also preserves the action and the second statement is also true. 
\end{proof}

We relate these intersections to the ideals $I_K$.  Recall that a simplex $\sigma\in K$ is also considered as a subcomplex
$\sigma\subset K$. 

\begin{lemma}\label{wipeout}
Let $K$ be a simplicial complex and $U\subset K$ a subset that contains all maximal simplices of $K$, 
in other words such that $\bigcup_{\sigma\in U}\sigma=K$. Then 
$I_K=\bigcap_{\sigma\in U}(V\setminus \sigma)$.
Suppose $U'\subset U$. Let
$K'=\bigcup_{\sigma\in U'}\sigma\subset K$
and $I=\bigcap_{\sigma\in U'}(V\setminus \sigma)\subset SR(K,\phi)$, then 
$I$ is the kernel of the induced map $SR(K,\phi)\rightarrow SR(K'\phi)$. 
\end{lemma}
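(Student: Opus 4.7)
The plan is to establish both statements by passing to monomial ideals, where everything can be checked at the level of individual monomials. Both $I_K$ and $\bigcap_{\sigma\in U}(V\setminus\sigma)$ are monomial ideals in $\zZ[V]$ (the latter as an intersection of ideals generated by subsets of the variables), so it suffices to determine which monomials each ideal contains. A monomial $m$ with support $S\subset V$ lies in $I_K$ exactly when $S\notin K$ and in $(V\setminus\sigma)$ exactly when $S\not\subset\sigma$, so $m\in\bigcap_{\sigma\in U}(V\setminus\sigma)$ iff $S$ is not contained in any $\sigma\in U$. Because $U$ contains every maximal simplex of $K$ and every face of $K$ sits inside a maximal one, this condition is equivalent to $S\notin K$, proving the first claim.

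For the second claim, the strategy is to apply the first claim to the pair $(K',U')$ and then quotient by $I_K$. First I would check that $U'$ contains every maximal simplex of $K'$: if $\tau$ is maximal in $K'=\bigcup_{\sigma\in U'}\sigma$, then $\tau\subset\sigma$ for some $\sigma\in U'$, but this $\sigma$ is itself a face of $K'$, so maximality of $\tau$ forces $\tau=\sigma\in U'$. The first claim then yields $I_{K'}=\bigcap_{\sigma\in U'}(V\setminus\sigma)$ as ideals in $\zZ[V]$.

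The kernel of the map $SR(K,\phi)=\zZ[V]/I_K\to\zZ[V]/I_{K'}=SR(K',\phi)$ is $I_{K'}/I_K$. Since each $\sigma\in U'$ lies in $K$, any monomial in $I_K$ has support outside $\sigma$, so $I_K\subset(V\setminus\sigma)$ for every $\sigma\in U'$. With $I_K$ contained in each of the ideals being intersected, the quotient commutes with the intersection, and one obtains $I_{K'}/I_K=\bigl(\bigcap_{\sigma\in U'}(V\setminus\sigma)\bigr)/I_K=\bigcap_{\sigma\in U'}\bigl((V\setminus\sigma)/I_K\bigr)=I$, which is the desired kernel. The only point requiring care is tracking whether each ideal is being considered in $\zZ[V]$ or in $SR(K,\phi)$ and verifying the intersection–quotient commutation; no serious obstacle is anticipated.
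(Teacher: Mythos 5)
Your proof is correct and follows essentially the same route as the paper: both arguments reduce everything to monomials and their supports, characterizing membership in $(V\setminus\sigma)$ by $\mathrm{supp}(\alpha)\not\subset\sigma$ and membership in $I_K$ by the support not being a face. Your treatment of the second claim (applying the first claim to $(K',U')$ and using that intersections of ideals containing $I_K$ commute with the quotient) just makes explicit what the paper dismisses as ``similar.''
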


\begin{proof}
Let $\alpha\in SR(K)$ be a monomial. The support of $\alpha$, $supp(\alpha)\subset V(K)$ is defined as 
the elements that appear with non-zero exponent in $\alpha$. 
The monomial $\alpha$ is in $(V\setminus \sigma)$ if and only if $supp(\alpha)\not\subset \sigma$.  
Also $\alpha\in I_K$ if and only if for every $\sigma\in K$, 
$supp(\alpha)\not\subset \sigma$. So $\alpha\in I_K$ if and only if
for every $\sigma \in K$, $\alpha \in (V\setminus \sigma)$ if and only if
$\alpha\in \bigcap_{\sigma\in K}(V \setminus \sigma)$. 

Since both $I_K$ and $\bigcap_{\sigma\in K}(V \setminus \sigma)=\bigcap_{\sigma\in U}(V\setminus \sigma)$ are determined by the monomials they contain, the first statement of the lemma follows. The proof of the second statement is similar. 
\end{proof}

\begin{example}
Let $\sigma \in K$ be a simplex, then $K_{\sigma}=\sigma$ and the last lemma says that 
$SR(K)/(V\setminus \sigma)\cong SR(\sigma, \phi)=\zZ[\sigma]$.
\end{example}

Combing the last two lemmas we get:

\begin{proposition}\label{quotSt}
Suppose $K$ is a simplicial complex and $U\subset P_{\max}(K)$ and let $K'=\bigcup_{\sigma\in U}\sigma$. If 
$SR(K,\phi)\otimes \zZ/p$ has an $\SA_p$ action then there is a unique $\SA_p$ action 
on $SR(K',\phi)\otimes \zZ/p$ making the quotient map $SR(K,\phi)\otimes \zZ/p\rightarrow SR(K',\phi)\otimes \zZ/p$ a map of algebras over the Steenrod algebra. 

In particular if $\sigma\in P_{\max}(K)$ then $\zZ/p[\sigma]$ has an induced action of $\SA_p$. 
\end{proposition}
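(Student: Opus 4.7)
The plan is to combine Lemmas \ref{quotient_to_sum_of_maximal_cell} and \ref{wipeout} in an essentially mechanical way: the kernel of the quotient map will turn out to be exactly an ideal already known to be $\SA_p$-invariant, so the Steenrod action descends.

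First I would identify the kernel of $SR(K,\phi) \rightarrow SR(K',\phi)$. Every maximal simplex $\sigma$ of $K$ lies in $P_{\max}(K)$ (as the intersection of the one-element family $\{\sigma\}$), so I can enlarge $U$ to $\widetilde{U} = U \cup \{\text{maximal simplices of } K\} \subset P_{\max}(K)$, which satisfies the hypothesis of the first part of Lemma \ref{wipeout}. Applying the second part of that lemma with $U' = U \subset \widetilde{U}$, and noting that $\bigcup_{\sigma \in U}\sigma = K'$ by definition, I conclude that $\bigcap_{\sigma \in U}(V \setminus \sigma)$ is the kernel of $SR(K,\phi) \rightarrow SR(K',\phi)$. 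Since Stanley-Reisner rings are free abelian, tensoring with $\zZ/p$ is exact and the analogous kernel-quotient description holds mod $p$.

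Next I would invoke Lemma \ref{quotient_to_sum_of_maximal_cell}: since $U \subset P_{\max}(K)$, the ideal $\bigcap_{\sigma \in U}(V \setminus \sigma) \subset SR(K,\phi) \otimes \zZ/p$ is preserved by $\SA_p$. A Steenrod action descends uniquely to a quotient by any invariant ideal, and by construction the induced action makes the quotient map equivariant. Uniqueness is then immediate from the surjectivity of the quotient: any two compatible actions on the target must agree on the images of all elements, which is to say everywhere.

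Finally, the ``in particular'' assertion is the specialization $U = \{\sigma\}$ for a single $\sigma \in P_{\max}(K)$, giving $K' = \sigma$ viewed as a subcomplex and $SR(\sigma,\phi) \otimes \zZ/p = \zZ/p[\sigma]$. I do not anticipate any real obstacle, since the heavy lifting is already in the ingredient lemmas; the only point requiring slight care is the enlargement of $U$ needed to meet the hypothesis of Lemma \ref{wipeout} and keeping track of which ring each ideal is considered in.
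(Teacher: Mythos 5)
Your proposal is correct and follows exactly the paper's own argument, which also deduces the result directly from Lemmas \ref{quotient_to_sum_of_maximal_cell} and \ref{wipeout} and obtains uniqueness from surjectivity of the quotient map. The only difference is that you spell out the details the paper leaves implicit (enlarging $U$ by the maximal simplices to meet the hypothesis of Lemma \ref{wipeout}, and the compatibility of the kernel description with $-\otimes \zZ/p$), which is a faithful elaboration rather than a different route.
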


\begin{proof}
This follows directly from Lemmas \ref{quotient_to_sum_of_maximal_cell} and \ref{wipeout} with uniqueness following since the quotient map is surjective. 
\end{proof}


\subsection{Steenrod actions on Stanley-Reisner algebras generated in degrees $4$ and $6$}

Now we restrict our attention Stanley-Reisner algebras generated in degrees $4$ and $6$.
Let $SR(K,\phi)=\zZ[x_1,\dots x_n,y_1,\dots y_m]/I_K$ be a Stanley-Reisner ring with $|x_i|=4,|y_i|=6$. By the unstable condition, an unstable mod-$2$ Steenrod algebra action on $SR(K,\phi)$ is determined by $\Sq^2(x_i),\Sq^2(y_i),\Sq^4(y_i)$, because mod-$2$ Steenrod algebra is generated by $\Sq^{2^i}$, and by the unstable condition $\Sq^{2^j}(x_i)=\Sq^{2^j}(y_i)=0$ for $j\geq 3$ and $\Sq^{4}(x_i)={x_i}^2$.
In other words, if there exist two unstable actions on $SR(K,\phi)$ such that $\Sq^2(x_i),\Sq^2(y_i),\Sq^4(y_i)$ are same, then using the Cartan formula we see that the two actions are the same. 
This is just saying that the action is determined by what it does on algebra generators. In the next section to see particular definitions of the action on generators extend to the whole algebra we will take advantage of a limit description of Stanley-Reisner rings.

At first we consider an unstable mod-$2$ Steenrod algebra action on polynomial rings generated by degree $4$ and $6$ generators. 
The following result follows from the result of Sugawara and Toda \cite{ST}.
\begin{theorem}\label{Sugawara_Toda}
   The polynomial ring $\zZ/2[x_1,\dots x_n,y_1,\dots y_m]$ with $|x_i|=4,|y_i|=6$ has an $\SA_2$ action if and only if $n\geq m$.
\end{theorem}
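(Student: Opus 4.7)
The plan is to handle the two implications separately, with the only-if direction being the substantive part.

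For the if direction, suppose $n\geq m$. I would exhibit the required action by realizing the algebra topologically: the mod-$2$ cohomology of $BSU(2)^{n-m}\times BSU(3)^m$ is, as an $\SA_2$-algebra, isomorphic to $\zZ/2[x_1,\dots,x_n,y_1,\dots,y_m]$ with the stated degrees, since $H^*(BSU(2);\zZ/2)=\zZ/2[c_2]$ with $|c_2|=4$ and $H^*(BSU(3);\zZ/2)=\zZ/2[c_2,c_3]$ with $|c_3|=6$. This automatically satisfies Cartan, Adem, and unstability.

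For the only-if direction, assume $R=\zZ/2[x_1,\dots,x_n,y_1,\dots,y_m]$ carries an unstable $\SA_2$-action and derive $n\geq m$. The first observation is that $R$ is concentrated in even degrees, so $\Sq^1\equiv 0$; then by unstability and degree counting, the action is determined by $\Sq^2(x_j)\in R^6=\langle y_1,\dots,y_m\rangle$, $\Sq^2(y_i)\in R^8=\mathrm{Sym}^2\langle x_1,\dots,x_n\rangle$, and $\Sq^4(y_i)\in R^{10}=\langle x_jy_k\rangle$. Writing
\[
\Sq^2(x_j)=\sum_i \alpha_{ji}\, y_i,\qquad \Sq^4(y_i)=\sum_{j,k}\gamma^{(i)}_{jk}\, x_j y_k,
\]
I record the $n\times m$ matrix $M=(\alpha_{ji})$ and the $n\times m$ matrices $C^{(i)}=(\gamma^{(i)}_{jk})$.

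The crucial input is the Adem relation $\Sq^2\Sq^4=\Sq^6+\Sq^5\Sq^1$, which because $\Sq^1=0$ on $R$ collapses to $\Sq^2\Sq^4=\Sq^6$; combined with unstability $\Sq^6(y_i)=y_i^2$, this yields $\Sq^2\Sq^4(y_i)=y_i^2$ for every $i$. Expanding the left side via Cartan and extracting the component lying in $\zZ/2[y_1,\dots,y_m]$ gives
\[
\sum_{j,k,l}\gamma^{(i)}_{jk}\,\alpha_{jl}\,y_l y_k=y_i^2,
\]
which is precisely the statement that the $m\times m$ matrix $N^{(i)}:=(C^{(i)})^T M$ is symmetric over $\zZ/2$ with diagonal the $i$-th standard basis vector $e_i$.

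The conclusion is then pure linear algebra. If $n<m$ then $\ker M\subset\zZ/2^m$ contains a nonzero $\lambda$. Since $N^{(i)}\lambda=(C^{(i)})^T M\lambda=0$ and $N^{(i)}$ is symmetric in characteristic $2$,
\[
0=\lambda^T N^{(i)}\lambda=\sum_k \lambda_k N^{(i)}_{kk}=\lambda_i,
\]
using $N^{(i)}_{kk}=\delta_{ki}$. Running $i$ over all indices forces $\lambda=0$, a contradiction, so $n\geq m$. The step I expect to require the most care is selecting the right Adem relation and keeping track of which piece of $\Sq^2\Sq^4(y_i)$ lands in the $y$-subalgebra versus the mixed $x$-$y$ part; once the identity $N^{(i)}=(C^{(i)})^TM$ symmetric with diagonal $e_i$ is isolated, the kernel-of-$M$ argument closes the proof immediately.
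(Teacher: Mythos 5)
Your proposal is correct. Be aware, though, that the paper does not actually prove this statement: it is quoted as following from Sugawara--Toda \cite{ST}, and only the ``if'' half is reconstructed in the text (Lemma \ref{Steenrod_structure_on_SU(3)}), by exactly the device you use --- identifying the ring with $\bigotimes^m H^*(BSU(3);\zZ/2)\otimes\bigotimes^{n-m}H^*(BSU(2);\zZ/2)$ via the Wu formula. Your ``only if'' argument is therefore the genuinely new content, and it holds up: since $4a+6b=6,8,10$ have unique solutions, the shapes you assign to $\Sq^2(x_j)$, $\Sq^2(y_k)$, $\Sq^4(y_i)$ are forced; $\Sq^1=0$ in even degrees collapses the Adem relation to $\Sq^2\Sq^4=\Sq^6$; and in degree $12$ the summands $\mathrm{Sym}^3\langle x\rangle$ and $\mathrm{Sym}^2\langle y\rangle$ are independent, so projecting $\Sq^2\Sq^4(y_i)=y_i^2$ onto the $y$-part legitimately isolates the identity that $N^{(i)}=(C^{(i)})^TM$ is symmetric with diagonal $e_i$. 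The closing computation $\lambda^TN^{(i)}\lambda=\sum_k N^{(i)}_{kk}\lambda_k=\lambda_i$ for $\lambda\in\ker M$ (valid over $\zZ/2$ because the off-diagonal terms pair off and $\lambda_k^2=\lambda_k$) then forces $\ker M=0$, i.e.\ $m\le n$. It is worth noting that your argument is structurally a polynomial-ring analogue of the paper's own Lemma \ref{neighborhood_independent}: both pivot on $\Sq^2\Sq^4=\Sq^6+\Sq^5\Sq^1$ and the forced equation $\Sq^2\Sq^4(y_i)=y_i^2$. The difference is that in the Stanley--Reisner setting the paper can first establish $\Sq^4(y_i)\in(y_i)$ (Lemma \ref{preserve_pre}) and work with a single linear form per generator, whereas in the free polynomial ring no such normalization is available, so you must carry the full matrices $C^{(i)}$; the payoff is a short, self-contained proof of the cited theorem that the paper otherwise outsources.
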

We would like to construct a Steenrod algebra action on polynomial ring generated by degree $4$ and $6$ generators.
\begin{lemma}\label{Steenrod_structure_on_SU(3)}
   Let $\zZ/2[x_1,\dots x_n,y_1,\dots y_m]$ be a polynomial ring  with $|x_i|=4,|y_i|=6$ and $n\geq  m$.
   Then there exists a Steenrod algebra action on this polynomial ring such that
   \[
      \Sq^4(y_i)=y_i x_i,\Sq^2(y_i)=0, \text{ and } \Sq^2(x_i)=
      \begin{cases}
         y_i \quad &(i\leq m)\\
         0 \quad &(m< i),
      \end{cases}
   \]
   and $\zZ/2[x_1,\dots x_n,y_1,\dots y_m]$ becomes an unstable algebra over mod-$2$ Steenrod algebra induced by this action.
\end{lemma}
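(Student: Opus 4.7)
The plan is to identify the polynomial ring with the mod-$2$ cohomology of a product of classifying spaces and borrow the Steenrod action from there. Since $n\geq m$, decompose
\[
\zZ/2[x_1,\dots,x_n,y_1,\dots,y_m] \;\cong\; \bigotimes_{i=1}^{m} \zZ/2[x_i,y_i] \;\otimes\; \bigotimes_{i=m+1}^{n}\zZ/2[x_i],
\]
and identify $\zZ/2[x_i,y_i]$ with $H^*(BSU(3);\zZ/2)=\zZ/2[c_2,c_3]$ and $\zZ/2[x_i]$ with $H^*(BSU(2);\zZ/2)=\zZ/2[c_2]$ via $x_i\mapsto c_2$ and $y_i\mapsto c_3$. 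Then the whole ring equals $H^*(X;\zZ/2)$ for
\[
X \;=\; \prod_{i=1}^{m} BSU(3) \,\times\, \prod_{i=m+1}^{n} BSU(2),
\]
and the tautological action of $\SA_2$ on $H^*(X;\zZ/2)$ is unstable and satisfies the Cartan formula automatically. Transporting it along the isomorphism gives an $\SA_2$-action on the polynomial ring of the required kind.

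What remains is to check that this action matches the formulas in the lemma. Using the Wu formula for Chern classes modulo $2$, together with the facts that $c_1=0$ in $BSU(3)$ and that higher Chern classes $c_4,c_5$ vanish, one computes in $H^*(BSU(3);\zZ/2)$ that $\Sq^2 c_2 = c_1c_2+c_3 = c_3$, $\Sq^2 c_3 = c_1c_3 = 0$, and $\Sq^4 c_3 = c_2c_3$. In $BSU(2)$ the degree-$6$ cohomology is zero, hence $\Sq^2 c_2 = 0$. Under the identifications above these translate on each tensor factor to exactly the stated formulas for $\Sq^2(x_i)$, $\Sq^2(y_i)$, and $\Sq^4(y_i)$, and the Cartan formula propagates them to the full tensor product.

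The main point requiring care is the Chern class computation via the Wu formula; once those identities are in hand, unstability, the Adem relations, and the Cartan formula all come for free because $H^*(-;\zZ/2)$ of any space carries such a structure. The tensor product decomposition is the organizational device that lets us bypass a direct verification of these algebraic identities and reduces the problem to two small, well-known cases.
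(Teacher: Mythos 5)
Your proposal is correct and is essentially the paper's own proof: the paper likewise identifies the ring with $\bigotimes_{i=1}^m H^*(BSU(3);\zZ/2)\otimes\bigotimes_{i=1}^{n-m}H^*(BSU(2);\zZ/2)$ and reads off the action from the Wu formula. The only difference is that you spell out the Chern class computation ($\Sq^2c_2=c_3$, $\Sq^2c_3=0$, $\Sq^4c_3=c_2c_3$ using $c_1=0$ and vanishing of higher Chern classes), which the paper simply cites.
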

\begin{proof}
   By Wu formula \cite{Sh}, the cohomology of the classifying space of $SU(3)$, $H^*(BSU(3),\zZ/2)\cong \zZ/2[x,y]$ is the unstable algebra over Steenrod algebra defined by
   \[
      \Sq^4(y)=y x, \,\Sq^2(y)=0, \text{ and } \Sq^2(x)=y.
   \]
   Similarly, the unstable algebra structure over Steenrod algebra on $H^*(BSU(2);\zZ/2)\cong \zZ/2[x]$ is  defined by $\Sq^2(x)=0$.
   Therefore there is an isomorphism as unstable algebra over Steenrod algebras
   \[
      \zZ/2[x_1,\dots x_n,y_1,\dots y_m]\cong \bigotimes_{i=1}^m H^*(BSU(3);\zZ/2)\otimes \bigotimes_{i=1}^{n-m} H^*(BSU(2);\zZ/2).
   \]
\end{proof}

\subsection{When $A(n,G)$ has a Steenrod algebra action}\label{abSteen}

 For the rest of the section we fix a graph $G$ and assume $A(n,G)\otimes \zZ/2$ has an $\SA_2$ action.

\begin{lemma}\label{preserve_pre}
  Assume that the minimum degree of the graph $G$ is no less than $2$, i.e. each vertex in $G$ has at least $2$ adjacent vertices.
   Then for all $i$, $\Sq^4(y_i)$ is in the ideal $(y_i)$ in $A(n,G)\otimes \zZ/2$.
\end{lemma}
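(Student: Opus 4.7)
The plan is to pin down $\Sq^4(y_i)$ by expanding it in a Stanley--Reisner basis and then using one Steenrod-equivariant quotient per remaining vertex of $G$ to kill the unwanted coefficients.

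First I would observe that in $A(n,G)\otimes \zZ/2$ the only monomials of degree $10$ are the products $x_k y_l$, since writing a degree $10$ element as $4|a|+6|b|$ forces $|a|=|b|=1$ (and products of two $y$'s already land in degree $12$). Hence there are unique $C_{k,l}\in\zZ/2$ with
\[
\Sq^4(y_i)=\sum_{k,l} C_{k,l}\, x_k y_l,
\]
and the conclusion $\Sq^4(y_i)\in (y_i)$ is equivalent to the vanishing of $C_{k,l}$ for every $l$ with $y_l\neq y_i$.

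The key observation is that under the minimum-degree-two hypothesis, for \emph{every} vertex $y_l\in V(G)$ the simplex $\sigma_l=\Delta^{n-1}*\{y_l\}$ lies in $P_{\max}(\Delta^{n-1}*G)$: indeed $\sigma_l$ is the common intersection of the maximal simplices $\Delta^{n-1}*\{y_l,y_{l'}\}$ as $y_{l'}$ ranges over the (at least two) neighbours of $y_l$. Proposition \ref{quotSt} then furnishes a Steenrod-equivariant surjection
\[
q_l\colon A(n,G)\otimes\zZ/2 \longrightarrow SR(\sigma_l,\phi)\otimes \zZ/2 = \zZ/2[x_1,\ldots,x_n,y_l]
\]
which sends $y_{l'}\mapsto 0$ for every $l'\neq l$.

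For each $l\neq i$ the map $q_l$ kills $y_i$, so $q_l(\Sq^4(y_i))=\Sq^4(q_l(y_i))=0$; but this image is also equal to $\sum_k C_{k,l}\,x_k y_l$, and the monomials $x_k y_l$ are part of a $\zZ/2$-basis of the target polynomial ring, forcing $C_{k,l}=0$ for every $k$. Letting $l$ run over $V(G)\setminus\{y_i\}$ eliminates every off-diagonal coefficient and leaves $\Sq^4(y_i)=\sum_k C_{k,i}\,x_k y_i\in (y_i)$. The only substantive step is recognising that $\sigma_l\in P_{\max}$, which is precisely where the minimum-degree-two assumption enters (and where the argument would break down for degree-zero or degree-one vertices); everything else is just linear algebra in the Stanley--Reisner basis, so I do not anticipate a serious obstacle.
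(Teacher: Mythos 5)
Your proof is correct, and it routes through the same technical engine as the paper (Lemma \ref{quotient_to_sum_of_maximal_cell} / Proposition \ref{quotSt}) but uses it differently. The paper takes $U$ to be the set of \emph{maximal} simplices not containing $y_i$, uses the minimum-degree-two hypothesis to see that $\bigcup_{\tau\in U}\tau=K_{V(K)\setminus\{y_i\}}$, identifies $\bigcap_{\sigma\in U}(V\setminus\sigma)$ with $I_{K_{V(K)\setminus\{y_i\}}}\subset (y_i)+(y_jy_k)$ via Lemma \ref{wipeout}, notes that this Steenrod-closed ideal contains $y_i$ and hence $\Sq^4(y_i)$, and finishes with a degree argument (degree $10<12$). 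You instead observe that minimum degree two forces $\Delta^{n-1}\ast\{y_l\}\in P_{\max}(K)$ for each vertex $y_l$ --- as the intersection of two maximal simplices $\Delta^{n-1}\ast\{y_l,y_{l'}\}$ --- and then read off the coefficients of $\Sq^4(y_i)$ one projection $q_l$ at a time. Your identification of where the hypothesis is genuinely needed is exactly right: for a degree-one vertex $y_l$ the simplex $\Delta^{n-1}\ast\{y_l\}$ is not an intersection of maximal simplices, which matches the paper's second counterexample following the lemma. What your version buys is a cleaner endgame (linear algebra in an explicit basis of the degree-$10$ part rather than an ideal-containment plus degree argument); what the paper's version buys is a single intersection of ideals that it can reuse structurally elsewhere (the same $U$-indexed intersections reappear in Lemma \ref{remove_degree_0_1}). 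Either argument is complete.
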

\begin{proof}
 Using Lemma \ref{46abthings} (1), all maximal simplices in $K$ contain $\{x_1,\dots x_n\}$ as a subset.
   Next by Lemma \ref{46abthings} (2), there is no triple $1\leq i<j<k\leq m$ such that $y_iy_jy_k\ne 0$.
   Thus for $i,j$ with $y_iy_j\ne 0$ $\sigma=\{x_1,\dots x_n,y_i,y_j\}$ is a maximal simplex, and $(V\setminus \sigma)=(\{y_1,y_2,\dots y_n\}\setminus\{y_i,y_j\})$.

   Let $U$ be the set of all maximal simplices not containing $y_i$.
  Since each vertex of $G$ has degree at least $2$, for any $j\not=i$ there exists $\tau\in U$ with $y_j\in \tau$. 
   So $\bigcup_{\tau\in U}\tau=K_{V(K)-\{y_i\}}$ and using Lemma \ref{wipeout} we obtain the inclusion

   \[
      \bigcap_{\sigma\in U} (V\setminus\sigma)
      =I_{\bigcup_{\tau\in U}\tau}
      \subset (y_i)+ ( y_jy_k\mid 1\leq j < k\leq m).
   \]

   Since by Lemma \ref{quotient_to_sum_of_maximal_cell}, $\Sq^4(y_i)$ is in the ideal 
   \[
      \bigcap_{\sigma\in U} (V\setminus\sigma),
   \]
   we obtain $\Sq^4(y_i)$ is in $(y_i)$ for degree reasons.
\end{proof}
\begin{example}
   We cannot prove this lemma in the general for algebras of the form $A(n,L)$. 
   
      Let $\zZ[x_1,x_2,x_3,y_1,y_2,y_3]$ be a polynomial ring and we define 
   \begin{align*}
      &\Sq^2(x_1)=y_1 &\quad &\Sq^2(x_2)=y_2&\quad &\Sq^2(x_3)=y_1+y_3\\
      &\Sq^4(y_1)=y_1x_1 &\quad &\Sq^4(y_2)=y_2x_2& \quad &\Sq^4(y_3)=y_1x_1+y_1x_3+y_3x_3\\
      &\Sq^2(y_1)=0 &\quad&\Sq^2(y_2)=0&\quad &\Sq^2(y_3)=0.
   \end{align*}
   Then there is an equation $\Sq^4(y_1+y_3)=(y_1+y_3)x_3$.
   When we take $\{x_1,x_2,x_3y_1,y_2,y_1+y_3\}$ as generators of this ring, by Lemma \ref{Steenrod_structure_on_SU(3)} this ring is isomorphic to $H^*(BSU(3))\otimes H^*(BSU(3))\otimes H^*(BSU(3))$.
   Therefore this ring satisfies all condition in the lemma other than this condition, but $\Sq^4(y_3)$ is not in $(y_3)$.

   Similarly, we cannot prove this lemma without the condition: ``the minimum degree of the graph $G$ is no less than
2".
   Let $\zZ[x_1,x_2,y_1,y_2]$ be a polynomial ring and we define 
   \begin{align*}
      &\Sq^2(x_1)=y_1& \quad &\Sq^2(x_2)=y_1+y_2\\
      &\Sq^4(y_1)=y_1x_1& \quad &\Sq^4(y_2)=y_1x_1+y_1x_2+y_2x_2\\
      &\Sq^2(y_1)=0&\quad &\Sq^2(y_2)=0.
   \end{align*}
   Then by a similar argument, we obtain that this ring satisfies all condition in the lemma other than this condition, but $\Sq^4(y_2)$ is not in $(y_2)$.
\end{example}

Recalling the notation of this section and considering degrees, let $J_i\subset [m]$ be such that $\Sq^4(y_i)=y_i(\sum_{j \in J_i} x_j)$.
We define the map $f\colon \{y_1,\dots y_m\}\rightarrow \langle x_1,\dots ,x_n\rangle$ as $f(y_i)=\sum_{j \in J_i} x_j$.
For a degree $6$ generator $y_i$, let $N(y_i)=N_{L_{\phi^{-1}(6)}^1}(y_i)\subset V$ be the neighborhood of $y_i$ in $L_{\phi^{-1}(6)}^1$.
\begin{lemma}\label{neighborhood_independent}
   Assume that $G$ has minimal degree at least $2$, and assume $A(n,G)\otimes \zZ/2$ has an $\SA_2$ action and use the notation above.
    Then for $1\leq i\leq m$ the cokernel of the linear inclusion
    \[
        \left\langle f(y_j) \Bigm\vert y_j \in N(y_i) \right\rangle \rightarrow \left\langle f(y_j) \Bigm\vert y_j \in N(y_i) \cup \{y_i\} \right\rangle
    \]
    is non-trivial.
    Thus $f$ is a $n$-span colouring of $G$ over $\zZ/2$.
\end{lemma}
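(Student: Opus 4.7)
The plan is to apply Steenrod squares to the identity $\Sq^4(y_i)=y_if(y_i)$ from Lemma \ref{preserve_pre} and extract enough structure on $f$ that the span colouring condition falls out of a single coefficient comparison.

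First I would note that since all generators of $A(n,G)$ have even degree, $A(n,G)\otimes\zZ/2$ is concentrated in even degrees and therefore $\Sq^1\equiv 0$ on it. The Adem relation $\Sq^2\Sq^4=\Sq^6+\Sq^5\Sq^1$ then reduces to $\Sq^2\Sq^4=\Sq^6$, and combined with the unstable axiom $\Sq^6(y_i)=y_i^2$ and the Cartan formula this produces the key identity
\[
y_i^2 \;=\; \Sq^2(y_i)\,f(y_i) \;+\; y_i\,\Sq^2(f(y_i)).
\]

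The second step is a monomial analysis in degree $12$. Because $|x_j|=4$ and $|y_j|=6$, every degree-$12$ monomial in $A(n,G)\otimes\zZ/2$ is either a pure-$x$ cubic or a product $y_jy_k$, and the latter is nonzero precisely when $j=k$ or $y_k\in N(y_j)$. Matching the pure-$x$ parts of the identity yields $\Sq^2(y_i)\,f(y_i)=0$ inside the polynomial subring $\zZ/2[x_1,\dots,x_n]$, a domain; ruling out $f(y_i)=0$ (which would force $y_i^2=\Sq^2\Sq^4(y_i)=0$, impossible) then gives $\Sq^2(y_i)=0$. Matching the pure-$y$ parts yields $y_i^2=y_i\,\Sq^2(f(y_i))$, and using the characterisation of nonzero $y_iy_j$, this forces the coefficient of $y_i$ in $\Sq^2(f(y_i))$ to be $1$ and the coefficient of $y_j$ to be $0$ for every $y_j\in N(y_i)$.

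Finally, to show $f(y_i)\notin\langle f(N(y_i))\rangle$ I would argue by contradiction: if $f(y_i)=\sum_{y_j\in N(y_i)}a_j f(y_j)$, then applying $\Sq^2$ and reading off the coefficient of $y_i$, the left-hand side gives $1$ by the previous step, while on the right each $\Sq^2(f(y_j))$ has $y_i$-coefficient $0$ (because $y_j\in N(y_i)$ implies $y_i\in N(y_j)$), so the right-hand side gives $0$. Combined with $f(y_i)\neq 0$, this shows $f$ is an $n$-weak span colouring and hence an $n$-span colouring by Proposition \ref{same-chromatic-number}. The main technical hurdle is the degree-$12$ monomial bookkeeping in the middle step; note that the minimum-degree-$2$ hypothesis enters essentially through Lemma \ref{preserve_pre}, which is what allows us to write $\Sq^4(y_i)$ in the clean form $y_i\cdot(\text{linear in the }x_j)$ to begin with.
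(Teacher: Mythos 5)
Your proposal is correct and follows essentially the same route as the paper: both derive $\Sq^2(y_if(y_i))=y_i^2$ from the Adem relation $\Sq^2\Sq^4=\Sq^6+\Sq^5\Sq^1$ together with the vanishing of $\Sq^1$ in even degrees, then use the Cartan formula and the structure of nonzero degree-$12$ monomials to read off that the $y_i$-coefficient of $\Sq^2(f(y_i))$ is $1$ while the $y_j$-coefficients vanish for neighbours, and conclude by the same coefficient-of-$y_i$ contradiction. Your version merely makes explicit a couple of points the paper leaves implicit (the direct-sum split of degree $12$ into $x$-cubics and $y_jy_k$'s, and the resulting $\Sq^2(y_i)=0$), which is a harmless refinement rather than a different argument.
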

\begin{proof}
   
   There is an equation $\Sq^2\Sq^4=\Sq^6+\Sq^5\Sq^1$ by the Adem relation. 
   Since there is no odd degree part in $SR(L,\phi)$, by this equation and the definition of $f$ for all $j$ we obtain $\Sq^2(y_jf(y_j))=\Sq^2(\Sq^4(y_j))=y_j^2$. Since we work in a square free monomial ideal ring, 
   by this equation and the Cartan formula, we obtain that  $\Sq^2(f(y_j))$ includes $y_j$ for all $j$.
   More precisely for degree reasons we can write $\Sq^2(f(y_j))=\Sigma_{i\in [m]} c_i y_i$ and we must have that 
   $c_j=1$. 
   
   We assume that the cokernel is trivial for an integer $i$.
   Then there exists an equation 
   \[
      f(y_i)=\sum_{j\in N(y_i)}a_{j}f(y_j)
   \] 
   for $a_{j}\in \zZ/2$.
   Since $\Sq^2(f(y_i))$ includes $y_i$, for some $y_j \in N(y_i)$ $\Sq^2(f(y_j))$ also includes $y_i$.
   Then $\Sq^2(y_j f(y_j))=y_j\Sq^2(f(y_j))+\Sq^2(y_j)f(y_j)$ must contain the term $y_jy_i$ which is not $0$ since 
   $y_i\in N(y_j)$. 
   This contradicts the equation $y_{j}^2=\Sq^2(y_jf(y_j))$.
   Therefore the assumption is wrong, and we obtain that the cokernel is non-trivial.
\end{proof}

Next we will eliminate the condition about the minimal degree of $G$ in Lemma \ref{neighborhood_independent} 
and Lemma \ref{preserve_pre}.
For a graph $G$, there exists a unique maximal subgraph $2G$ of $G$ such that the minimal degree is no less than $2$.
Note that $2G$ can be empty. 
\begin{lemma}
Suppose $G$ is a graph, and 
\begin{equation}\label{happy}
      G=G_0\supset G_1\supset \dots \supset G_l=H
   \end{equation}
   a sequence of subgraphs such that $G_{i+1}$ is gotten from $G_i$ by removing some vertices of degree $0$ and $1$, and such that $H$ as minimal degree no less then $2$. Then $H=2G$. Also such a sequence always exist such that going from $G_{i+1}$ to $G_i$ removes exactly one vertex.
\end{lemma}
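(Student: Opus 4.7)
The plan is to prove two things: first that any sequence as in \eqref{happy} terminates at $2G$, and second that such a sequence exists which removes exactly one vertex per step. Both parts rest on the simple observation that a vertex of $2G$ cannot get its degree dropped below $2$ by removing other vertices of $2G$, so it can never become a candidate for removal.

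The main step is to show by induction on $i$ that $2G \subseteq G_i$. The base case $i = 0$ is immediate. For the inductive step, suppose $2G \subseteq G_i$ and let $v$ be any vertex of $2G$. Because $2G$ has minimum degree at least $2$, there exist at least two neighbours $w_1, w_2 \in V(2G) \subseteq V(G_i)$ of $v$. Since the subgraphs appearing in \eqref{happy} are induced subgraphs (removing a vertex drops all incident edges, but keeps every other edge of the ambient graph), $w_1$ and $w_2$ remain neighbours of $v$ in $G_i$, so $\deg_{G_i}(v) \geq 2$. Consequently $v$ is not among the vertices removed in passing from $G_i$ to $G_{i+1}$, and the same reasoning applied edge-by-edge gives $2G \subseteq G_{i+1}$. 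Applying this to $i = l$ yields $2G \subseteq H$.

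For the reverse inclusion, $H$ itself is a subgraph of $G$ with minimum degree at least $2$, so by the maximality in the definition of $2G$ we get $H \subseteq 2G$. Combining the two inclusions gives $H = 2G$, which proves uniqueness of the terminal graph independent of the choices made along the sequence.

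Finally, for the existence claim, I would simply proceed greedily: starting from $G_0 = G$, if the current graph $G_i$ has any vertex of degree $0$ or $1$, choose one such vertex and let $G_{i+1}$ be $G_i$ with that single vertex removed. Since $|V(G_i)|$ strictly decreases, this process must terminate after finitely many steps at some graph $G_l$ with minimum degree at least $2$ (possibly empty), and by the first part $G_l = 2G$. The anticipated obstacle is essentially nil once one notices that the induced-subgraph convention is the right reading of ``removing vertices''; without that, one could worry about edge sets, but the convention used throughout the paper makes the argument a clean induction.
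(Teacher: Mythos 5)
Your proof is correct and follows essentially the same route as the paper's: the key observation in both is that a vertex lying in a subgraph of minimum degree at least $2$ retains degree at least $2$ in every $G_i$ containing that subgraph, so it is never a candidate for removal, giving $2G\subseteq H$, while maximality gives $H\subseteq 2G$; the existence claim is the same greedy termination argument the paper leaves as "easy to see." No further comment needed.
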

\begin{proof}
The graph $H$ is a subgraph of $G$ with minimal degree no less than $2$. Given any other such subgraph $H'$ can not have any of the vertices removed to get to $H$ and so $H'\subset H$ is a subgraph. Thus $H=2G$. It is easy to see that such a sequence exists . 
\end{proof}

Recall the set up from the start of Section \ref{abSteen}.

\begin{lemma}\label{remove_degree_0_1}
   Assume that $2G$ is not the null graph. Then the projection  $A(n,G)\otimes \zZ/2 \rightarrow A(n,2G)\otimes \zZ/2$
induces a Steenrod algebra action on 
  $A(n,2G)\otimes \zZ/2$.
  
  \end{lemma}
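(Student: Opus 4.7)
The plan is to apply Proposition \ref{quotSt} to $K = \Delta^{n-1} \ast G$ with a carefully chosen $U \subset P_{\max}(K)$ so that $\bigcup_{\sigma \in U} \sigma = \Delta^{n-1} \ast 2G$. Since $A(n, 2G) = SR(\Delta^{n-1} \ast 2G, \phi)$, the proposition would then supply a unique $\SA_2$ action on $A(n, 2G) \otimes \zZ/2$ making the projection $A(n,G)\otimes \zZ/2 \to A(n,2G)\otimes \zZ/2$ a map of algebras over the Steenrod algebra, which is exactly what is claimed.

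First I would take
\[
U = \{\Delta^{n-1} \ast e \mid e \text{ is an edge of } 2G\}.
\]
Because $G$ is treated as a simplicial complex of dimension at most one, every edge of $2G$ is a maximal simplex of $G$; joining with $\Delta^{n-1}$ gives a maximal simplex of $K$, and any maximal simplex trivially lies in $P_{\max}(K)$ (take the intersection of a singleton family of maximal simplices). Hence $U \subset P_{\max}(K)$.

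Next I would check $\bigcup_{\sigma \in U} \sigma = \Delta^{n-1} \ast 2G$. Since joins distribute over unions, this reduces to verifying that the edges of $2G$ cover $2G$ as a simplicial complex. Every edge is trivially covered by itself, and every vertex of $2G$ is contained in some edge of $2G$ because $2G$ has minimum degree at least $2$ by construction. The hypothesis that $2G$ is nonempty ensures $U$ is nonempty, so Proposition \ref{quotSt} applies and produces the desired induced Steenrod action.

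There is no serious technical obstacle in this argument; it is essentially a direct invocation of Proposition \ref{quotSt}. The only real content is the observation that the minimum-degree-two condition defining $2G$ forces each of its vertices to be contained in one of its edges, which is precisely what lets us present $\Delta^{n-1}\ast 2G$ as a union of maximal simplices of $\Delta^{n-1}\ast G$ rather than having to chase harder intersections of maximal simplices inside $P_{\max}(K)$.
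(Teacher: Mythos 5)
Your proof is correct, and it takes a genuinely more direct route than the paper's. The paper proves this lemma iteratively: it strips off degree~$0$ and degree~$1$ vertices of $G$ one (or two) at a time, producing a chain $G=G_0\supset G_1\supset\dots\supset G_l=2G$ and applying Proposition~\ref{quotSt} at each step, with a small case analysis (degree~$0$ vertex; degree~$1$ vertex whose neighbour also has degree~$1$; degree~$1$ vertex whose neighbour has higher degree). You instead apply Proposition~\ref{quotSt} once, with $U=\{\Delta^{n-1}\ast e\mid e\in E(2G)\}$. The two facts that make this work are exactly the ones you isolate: since $G$ is one-dimensional, every edge of $2G$ is a maximal simplex of $G$, hence $\Delta^{n-1}\ast e$ is a maximal simplex of $K=\Delta^{n-1}\ast G$ and so lies in $P_{\max}(K)$; and since $2G$ has minimum degree at least~$2$, every vertex of $2G$ lies in some edge, so $\bigcup_{e}(\Delta^{n-1}\ast e)=\Delta^{n-1}\ast 2G$, whose Stanley--Reisner ring is $A(n,2G)$ by definition. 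The non-null hypothesis guarantees $U\neq\emptyset$. Your version buys brevity and eliminates the case analysis; what the paper's version buys is the intermediate chain of subgraphs itself (Equation~\ref{happy}), which is reused in the proof of Theorem~\ref{Steenrod_alg_chromatic_number} to extend a span colouring of $2G$ back up to one of $G$, so the iterative bookkeeping is not wasted even though it is unnecessary for this lemma alone.
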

\begin{proof}
If $G=2G$ there is nothing to prove. Otherwise there is $y\in V(G)$ such that $deg(y)\leq 1$. 
   When $y$ is a degree $0$ vertex in $G$, then $\sigma_1=\{x_1,\dots x_n,y\}$ is the only maximal simplex of $K$ containing $y$. Since $H$ is not the null complex, it is not the only maximal simplex. Let 
   $\sigma_1,\dots , \sigma _k$ be the maximal simplices of $L$. Directly compute that 
   $K_{V\setminus \{y\}}=\sigma_2\cup \dots \cup \sigma_k$. 
   Also $K_{V\setminus \{y\}}=\Delta^{n-1}\ast G_{V(G)\setminus \{y\}}=A(n,G_{V(G)\setminus \{y\}})$. 
   So $A(n,G_{V(G)\setminus \{y\}})\otimes \zZ/2$ has an induced action of the Steenrod Algebra by Proposition
   \ref{quotSt}

   Similarly when $y$ is a degree $1$ vertex in $L^1$ and $y'$ satisfies 
   $yy'\ne 0$, then $\sigma_1=\{x_1,\dots x_n,y, y'\}$ is the only maximal simplex containing $y$ and let $\sigma_2, \dots ,  \sigma_k$ be the rest of the maximal simplices of 
   $K$. As before $k>1$ since $2G$ is not the null graph. If $y'$  is also of degree one then again by direct computation 
   $\sigma_2\cup \dots \cup \sigma_k=K_{V\setminus \{y, y'\}}$. On the other hand if the degree of $y'$ is larger than $1$, 
    $\sigma_2\cup \dots \cup \sigma_k=K_{V\setminus \{y\}}$.
   
   Arguing as in the degree $0$ case, we get that  $A(n,G_{V(G)\setminus \{y\}})\otimes \zZ/2$ or 
   $A(n,G_{V(G)\setminus \{y,y'\}})\otimes \zZ/2$ have an induced action of the Steenrod Algebra.
   
   If degree $0$ or $1$ vertices remain we can repeat the process and so will get a sequence of the form of Equation 
   \ref{happy} together with the sequence of projections $A(n,G)=A(n,G_0)\rightarrow A(n,G_1) \rightarrow \dots\rightarrow 
   A(n,G_l)=A(n,2G)$ with a Steenrod action on $A(n,G_i)\otimes \zZ/2$ inducing one on 
   $A(n,G_{i+1})\otimes \zZ/2$. Thus the lemma follows. 
\end{proof}

The following is the main result in this section.
The following result is for any $A(n,G)$. 
\begin{theorem}\label{Steenrod_alg_chromatic_number} 
For any graph $G$, 
 if $A(n,G)\otimes \zZ/2$ has an $\SA_2$ action then $s_2\chi(G)\leq n$.
\end{theorem}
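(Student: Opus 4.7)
The plan is to move between $G$ and its minimum-degree-$\geq 2$ core $2G$: descend on the algebraic side using Lemma \ref{remove_degree_0_1}, read off an $n$-span colouring of $2G$ from Lemma \ref{neighborhood_independent}, and then lift the colouring back to $G$ combinatorially.

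Assume first that $2G$ is non-empty. Iterating Lemma \ref{remove_degree_0_1} transports the given $\SA_2$ action on $A(n,G)\otimes \zZ/2$ to one on $A(n,2G)\otimes \zZ/2$, and since $2G$ has minimum degree at least $2$, Lemma \ref{neighborhood_independent} supplies an $n$-span colouring $f\colon V(2G)\to (\zZ/2)^n$. To extend $f$ to $G$ I would fix a chain $2G=G_l\subset G_{l-1}\subset\cdots \subset G_0=G$ in which each $G_i$ arises from $G_{i+1}$ by adjoining a single vertex $v$ of degree at most one in $G_i$, and then extend $f$ one vertex at a time. If $v$ has no neighbour in $G_i$, any non-zero vector suffices; if $v$ has a unique neighbour $u$, set $W=\langle f(N_{G_{i+1}}(u))\rangle$ and observe that the values of $f(v)$ that would violate the span condition at either $v$ or $u$ form the set $\{0\}\cup (\langle W,f(u)\rangle\setminus W)$, of size $1+2^{\dim W}\leq 1+2^{n-1}$, leaving at least $2^{n-1}-1$ legal choices whenever $n\geq 2$.

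It remains to confirm that $n\geq 2$ whenever a degree-$1$ extension is actually performed, and to dispatch the case $2G=\emptyset$. If $G$ has any edge $\{y_i,y_j\}$, then by Lemma \ref{46abthings}(2) the maximal cliques of $G$ have size at most two, so $\sigma=\{x_1,\dots,x_n,y_i,y_j\}$ is a maximal simplex of $\Delta^{n-1}\ast G$; Proposition \ref{quotSt} endows $\zZ/2[\sigma]$ with an unstable $\SA_2$ action, and Theorem \ref{Sugawara_Toda} then yields $n\geq 2$, precisely the bound required by the extension count. The case $2G=\emptyset$ means $G$ is a forest: if $G$ is edgeless then $s_2\chi(G)\leq 1$, with $n\geq 1$ following from a direct Adem-relation argument using $\Sq^2\Sq^4=\Sq^6+\Sq^5\Sq^1$ to rule out $n=0$ once a degree-$6$ generator is present; if $G$ has an edge then $\chi(G)=2$, so $s_2\chi(G)\leq 2\leq n$ by Proposition \ref{important} combined with the Sugawara--Toda bound.

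The main obstacle I anticipate is the bookkeeping of the extension step: when $v$ is reintroduced, one must simultaneously satisfy the span condition at $v$ and preserve it at the unique previously-coloured neighbour $u$ once $v$ is added to $u$'s neighbourhood. The combinatorial count shows this is achievable precisely for $n\geq 2$, and the Sugawara--Toda obstruction applied to a maximal simplex carrying an edge guarantees that this hypothesis is always met in the only configurations where a degree-$1$ extension actually occurs.
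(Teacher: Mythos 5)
Your proposal is correct and follows essentially the same route as the paper: reduce to the minimum-degree-$\geq 2$ core $2G$ via Lemmas \ref{remove_degree_0_1} and \ref{neighborhood_independent}, extend the colouring back along the vertex-removal chain one degree-$0$ or degree-$1$ vertex at a time, and dispatch the forest/small-$n$ cases with Theorem \ref{Sugawara_Toda}. The only differences are cosmetic: you verify the extension step by counting admissible vectors in the weak span colouring model (the paper instead writes down an explicit $A_{(\zZ/2)^n}$-valued extension $(U',W\oplus U)$), and you obtain $n\geq 2$ from Sugawara--Toda on a maximal simplex containing an edge rather than from $n\geq s_2\chi(2G)\geq 2$.
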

\begin{proof}
   Recall $2G$, the maximal subgraph of $G$ such that minimal degree no less than $2$. Then by Lemma \ref{neighborhood_independent} and \ref{remove_degree_0_1}, we obtain that $s_2\chi(2G)\leq n$.

   First suppose $2G$ is not the empty set, then $s_2\chi(2G)\geq 2$.   Take an $n$-span colouring $f\colon 2G\rightarrow A_n$. We will build a $n$-span colouring of $G$ using a sequence \ref{happy}. 
  such that going from $G_{i+1}$ to $G_i$ we add a single degree $0$ or $1$ vertex. Supposing we have 
  $f_{i+1}\colon G_{i+1}\rightarrow A_n$. We set $f_i|_{G_i}=f_{i+1}$. On a degree $0$ vertex, $v$ we can define $f_i(v)$ to be any element of $A_n$. 
  If $v$ has degree $1$, let $w$ be the one vertex it is connected to. We have $f_{i+1}(w)=(U,V)$ were $U$ is a line and since $n\geq 2$, $V$ is a subspace of dimension at least one that does not contain $U$. So pick any line $U'\subset V$ and $W$ a complementary subspace of $U'$ in $V$. Define $f_i(v)=(U', W\oplus U)$. Putting these together we have defined $f_i\colon G_i\rightarrow A_n$. Doing this same procedure repeatedly give us a map
  $G\rightarrow A_n$ and so $s_2\chi(G)\leq n$.   
     
   Next we consider the case that $2G$ is the emptyset. Either by following the argument above or by observing that $G$ is a forest (ie a disjoint union of trees) and applying Proposition \ref{important}
   we see that $s_2\chi(G)$ is at most $2$. Thus if $n\geq 2$ we are done and we only have to deal with the cases $n=0 \ {\rm or } \ 1$. 
   Let $\sigma$ be a maximal simplex of $K$. If $n=0$ then $\sigma=K_\sigma$ is a simplex with no degree $4$ vertices and so by Theorem \ref{Sugawara_Toda} it also has no degree $6$ simplices this implies $G$  is empty so $s_2\chi(G)=0=n$
If $n=1$,  $K_\sigma$ is a simplex with one degree $4$ vertex and so again by  Theorem \ref{Sugawara_Toda}, $\sigma$ has at most one degree $6$ vertex. This implies that $G$ has no edges and so $s_2\chi(G)\leq 1=n$. 
     
 By combining these we can obtain $s_2\chi(G)\leq n$.
\end{proof}

\begin{corollary}\label{realimpspan}
For any graph $G$, if $A(n,G)$ is realizable then $s_2\chi(G)\leq n$.
\end{corollary}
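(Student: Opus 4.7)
The plan is to deduce this corollary directly from Theorem \ref{Steenrod_alg_chromatic_number} by showing that realizability forces an unstable $\SA_2$-action on $A(n,G)\otimes \zZ/2$. The argument is essentially a two-line reduction, so the proposal is mostly about verifying the hypotheses of the earlier theorem are met.

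First I would fix a space $X$ realizing $A(n,G)$, meaning a chosen isomorphism $A(n,G) \cong H^*(X;\zZ)$ of graded rings. The next step is to identify the mod $2$ cohomology of $X$. Since $A(n,G) = SR(\Delta^{n-1} \ast G, \phi)$ is a quotient of a polynomial ring by an ideal generated by squarefree monomials, it is free as a graded abelian group (a monomial basis can be read off from the simplices of $\Delta^{n-1}\ast G$ together with exponent data). In particular $A(n,G)$ is torsion-free, so the universal coefficient theorem gives a natural isomorphism of graded rings
\[
H^*(X;\zZ/2) \;\cong\; H^*(X;\zZ) \otimes \zZ/2 \;\cong\; A(n,G)\otimes \zZ/2.
\]

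Once this identification is made, the mod $2$ cohomology of any topological space carries a natural unstable action of the Steenrod algebra $\SA_2$ satisfying the Cartan formula. Transporting this action across the ring isomorphism above endows $A(n,G)\otimes \zZ/2$ with the structure of an unstable algebra over $\SA_2$. At this point the hypothesis of Theorem \ref{Steenrod_alg_chromatic_number} is exactly met, so applying that theorem yields $s_2\chi(G) \leq n$, which is the desired conclusion.

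There is no real obstacle in this corollary, since all of the work has already been carried out in Theorem \ref{Steenrod_alg_chromatic_number}; the only subtlety to be careful about is the torsion-freeness of $A(n,G)$, which ensures that mod $2$ reduction of the integral cohomology ring genuinely agrees with the mod $2$ cohomology ring rather than only being related to it by a short exact sequence.
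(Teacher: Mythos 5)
Your proof is correct and is essentially identical to the paper's: both use torsion-freeness of $A(n,G)$ to identify $A(n,G)\otimes \zZ/2$ with $H^*(X;\zZ/2)$, transport the unstable $\SA_2$-action, and invoke Theorem \ref{Steenrod_alg_chromatic_number}. No issues.
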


\begin{proof}
Suppose $A(n,G)$ is realized by the space $X$, then 
$A(n,G)\otimes \zZ/2\cong H^*(X, \zZ)\otimes  \zZ/2\cong H^*(X, \zZ/2)$ with the second isomorphism following since $A(n,G)$ is torsion free. Since $H^*(X, \zZ/2)$ has an unstable Steenrod action so does $A(n,G)\otimes \zZ/2$ and then 
Theorem \ref{Steenrod_alg_chromatic_number} implies $s_2\chi(G)\leq n$.
\end{proof}

\section{Construction of Steenrod algebra structure}\label{sec:Construction_of_Steenrod_algebra_structure}

In this section we construct an unstable Steenrod algebra action on $A(n,L)$ 
using an $n$-span colouring on its one 
skeleton $L^1$ (Theorem \ref{construct_Steenrod_mod_str}) which gives a converse to 
Theorem \ref{Steenrod_alg_chromatic_number} when $L=L^1$.  
We first use poset indexed limits to construct Steenrod algebra actions on a 
general graded Stanley-Riesner ring $SR(K, \phi)$.

\subsection{Steenrod actions on limits of polynomial algebras}
We sometimes regard a poset $P$ as a category.
The objects of this category are the elements in $P$, and for $a,b\in P$ there is at most one morphism $a\rightarrow b$ and there exist a morphism $a\rightarrow b$ if and only if $a\leq b$.
Let $F$ be a functor from $P^{op}$ to the category of unstable algebras over mod-$p$ Steenrod algebra.
We define $A$ as 
\[
   A=\left\{(x_a)\in \prod_{a\in P} F(a)\mid F(a>b)(x_a)=x_{b}\right\}.
\]
This $A$ is the algebra over $\zZ/p$ induced by the algebra structure of each $F(a)$ for $a\in P$.

We define the Steenrod algebra action on $x=(x_a)\in A$ as 
\[
   \mathcal{P}^i(x)=(\mathcal{P}^i(x_{a})),\quad \beta(x)=(\beta(x_a))
\]
when $p\geq 3$, and 
\[
   \Sq^i(x)=(\Sq^i(x_{a}))
\]
when $p=2$.

The action of the Steenrod algebra on $A$ as an module is induced by these maps.
Then the projection $A\rightarrow F(a)$ becomes a homomorphism of algebras over mod-$p$ Steenrod algebra.
Thus by considering the each component, we can see that this action satisfies the Cartan formula and the unstable condition.
Therefore $A$ becomes an unstable algebra over mod-$p$ Steenrod algebra.
Regarding this $A$ we have the following lemma.
\begin{lemma}\label{limit_alg_Steenrod_geberal}
   In the situation above, $A$ is the limit of $F$.
   Moreover $A$ is also the limit as a graded algebra over $\zZ/p$. 
   In other words, the forgetful functor from the category of unstable algebras over mod-$p$ Steenrod algebra to graded algebras over $\zZ/p$ preserves the limit.
\end{lemma}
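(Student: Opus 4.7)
The plan is to verify the universal property of a limit directly, in two stages. I would first note that the construction of $A$ as a subset of $\prod_{a \in P} F(a)$ cut out by the compatibility conditions $F(a > b)(x_a) = x_b$ is the standard set-theoretic limit, and with componentwise operations it is the limit in the category of graded $\zZ/p$-algebras, since the forgetful functor from graded $\zZ/p$-algebras to sets creates limits (the algebra operations are induced componentwise, and compatibility is preserved because each $F(a > b)$ is an algebra map). In particular the projections $\pi_a \colon A \to F(a)$ form the universal cone in graded algebras.

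Next I would upgrade this to a limit in the category of unstable algebras over $\SA_p$. First, to see that the componentwise action on $A$ is well defined, I observe that since each transition map $F(a > b)$ is a map of unstable $\SA_p$-algebras, for $(x_a) \in A$ and any $\theta \in \SA_p$ we have $F(a > b)(\theta x_a) = \theta F(a > b)(x_a) = \theta x_b$, so $(\theta x_a) \in A$. The Cartan formula, instability, and the Adem relations all hold in $A$ because they hold componentwise in each $F(a)$, so $A$ is indeed an unstable $\SA_p$-algebra, and the projections $\pi_a$ are morphisms in that category.

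To finish the universal property, let $(g_a \colon B \to F(a))_{a \in P}$ be any cone in the category of unstable $\SA_p$-algebras. The underlying cone of graded algebras admits a unique factorization $g \colon B \to A$ with $\pi_a \circ g = g_a$, given explicitly by $g(b) = (g_a(b))_{a \in P}$; this is automatically a map of graded $\zZ/p$-algebras by the first stage. I only need to check that $g$ respects the Steenrod action: for any $\theta \in \SA_p$ and $b \in B$,
\[
   g(\theta b) = (g_a(\theta b))_a = (\theta g_a(b))_a = \theta (g_a(b))_a = \theta g(b),
\]
using that each $g_a$ is $\SA_p$-linear and that $\SA_p$ acts componentwise on $A$. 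Uniqueness is inherited from uniqueness in the graded algebra category. Hence $A$ is the limit of $F$ in unstable $\SA_p$-algebras.

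The second assertion is then immediate: the object and the projections witnessing the $\SA_p$-limit are the same as those witnessing the graded $\zZ/p$-algebra limit, so the forgetful functor sends the limit to the limit. There is no genuine obstacle; the content of the lemma is the routine but necessary bookkeeping that the componentwise Steenrod action is well defined on $A$ and that morphisms of unstable $\SA_p$-algebras into the factors assemble into a morphism into $A$, both of which follow from the transition maps being $\SA_p$-equivariant.
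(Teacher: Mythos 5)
Your proposal is correct and follows essentially the same route as the paper: both verify the universal property directly using the explicit componentwise construction of $A$ and of the induced map $g(b)=(g_a(b))_a$, with the second assertion following because the same object and projections serve as the limit cone in both categories. Your additional remark that $(\theta x_a)_a$ lands back in $A$ because the transition maps are $\SA_p$-equivariant is a small point of care the paper leaves implicit, but it does not change the argument.
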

\begin{proof}
   Let $B$ be an algebras over mod-$p$ Steenrod algebra with maps $f_a\colon B\rightarrow F(a)$ for all $a \in P$ that makes the following diagram commutative for all $a <b \in P$
   \[
      \xymatrix{
         B\ar^{f_b}[r] \ar_{f_{a}}[rd]& F(b)\ar^{F(b>a)}[d]\\
          & F(a). \\
      }
   \]
   We define the map $f\colon B\rightarrow A$ as $f(y)=(f_a(y))$.
   By the commutative diagram, the image of $f$ is in $A$ and this map is well-defined.
   Since for $y,y'\in B$ there are equations
   \begin{align*}
      f(yy')=(f_a(yy'))=(f_a(y)f_a(y'))=f(y)f(y')\\
      f(\theta(y))=(f_a(\theta(y)))=\theta(f_a(y))=\theta f(y),
   \end{align*}
   where $\theta$ is a Steenrod operation, the map $f$ is a homomorphism of algebras over mod-$p$ Steenrod algebra.
   Therefore there exists a commutative diagram in the category of algebras over mod-$p$ Steenrod algebra
   \[
      \xymatrix{
         B\ar^{f}[r] \ar_{f_a}[rd]& A\ar[d]\\
          & F(a). \\
      }
   \]
   Moreover if a map $g\colon B\rightarrow A$ makes the diagram above commutative instead of $f$, then $g$ must satisfy $g(y)=(f_{a}(y))$.
   Thus $g=f$ and we obtain the uniqueness of such a map $f$.
   Therefore $A$ is the limit of the functor $F$.

   When we regard $A$ as an algebra over $\zZ/p$, we can prove that $A$ is the limit of the composition of $F$ and the forgetful functor similarly.
\end{proof}
By this lemma, we describe limits in the category of unstable algebras over mod-$p$ Steenrod algebra.
Next we construct Stanley-Reisner rings as a limit.
The following lemma is well known in category theory as the property of initial functor (ref. Chapter IX, Section 3 in \cite{M}).
\begin{lemma}\label{final_functor_limit}
   Let $\mathcal{C}$ be a category, $P$ be a poset, and $F\colon P^{op} \rightarrow \mathcal{C}$ be a functor.
   If a subposet of $P$, $P'$, satisfies the following condition;
   \begin{enumerate}
      \item For $a\in P$ there exists $b\in P'$ with $a\leq b$
      \item For $a\in P$ and a pair $b,b'\in P'$ with $a\leq b,b'$, there exists a sequence in $P'$, $b_1,b_2,\dots b_i$ such that $a\leq b_1,b_2,\dots b_i$ and $b\geq b_1\leq b_2 \geq \dots \geq b_i \leq b'$.
   \end{enumerate}
   Then there is an isomorphism
   \[
      \lim_{P^{op}} F \rightarrow \lim_{(P')^{op}}F,
   \]
   where $\lim_{(P')^{op}}F$ is the limit of $F$ restricted to $(P')^{op}$.
\end{lemma}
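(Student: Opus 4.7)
The plan is to construct mutually inverse maps between $\lim_{P^{op}} F$ and $\lim_{(P')^{op}} F$; this is the classical argument that an initial inclusion of categories (which is exactly what conditions (1) and (2) encode for $P' \hookrightarrow P$) induces an isomorphism of limits. The restriction of cones along $(P')^{op} \hookrightarrow P^{op}$, applied to the universal cone of $\lim_{P^{op}} F$, produces the comparison map $\Phi\colon \lim_{P^{op}} F \to \lim_{(P')^{op}} F$, and the remaining task is to build an explicit inverse $\Psi$.

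Given any cone $(x_b)_{b\in P'}$ on the restricted diagram with apex $X$, I would extend it to a cone $(y_a)_{a\in P}$ with apex $X$ as follows. For each $a \in P$ choose, via condition (1), some $b \in P'$ with $a \leq b$, and set $y_a := F(b > a) \circ x_b$. When $a$ itself lies in $P'$, the choice $b = a$ gives $y_a = x_a$, so the extension restricts back to the original cone. The hard part is then showing that $y_a$ is independent of the chosen $b$, and this is where condition (2) is indispensable. Given another $b' \in P'$ with $a \leq b'$, invoke (2) to obtain a zigzag $b \geq b_1 \leq b_2 \geq \cdots \leq b'$ in $P'$ with each intermediate $b_j \geq a$. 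On every leg of this zigzag the cone identity $F(c > d) \circ x_c = x_d$ (valid for $d \leq c$ in $P'$), together with the functoriality of $F$, identifies $F(b_j > a) \circ x_{b_j}$ with the value coming from the next vertex of the zigzag, producing the chain
\[
F(b > a) \circ x_b \;=\; F(b_1 > a) \circ x_{b_1} \;=\; F(b_2 > a) \circ x_{b_2} \;=\; \cdots \;=\; F(b' > a) \circ x_{b'},
\]
so $y_a$ is well defined.

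It then remains to verify that $(y_a)$ really is a cone over $F$ and that $\Phi, \Psi$ are mutually inverse. For the cone condition, given $a \leq a'$ in $P$, pick $b \in P'$ with $a' \leq b$; this $b$ also witnesses $a \leq b$, and functoriality gives $F(a' > a) \circ y_{a'} = F(a' > a) \circ F(b > a') \circ x_b = F(b > a) \circ x_b = y_a$. The compositions $\Phi \circ \Psi$ and $\Psi \circ \Phi$ are identities essentially by construction: $y_a = x_a$ when $a \in P'$, and for a cone $(y_a)$ on $F$ the reconstructed value $F(b > a) \circ y_b$ already equals $y_a$ by the original cone condition. The only substantive step in the whole argument is the zigzag well-definedness; without condition (2) two choices of upper bound $b,b'$ could not be compared, and everything else is routine bookkeeping.
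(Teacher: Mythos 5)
Your proof is correct. Note that the paper itself gives no argument for this lemma: it simply cites the initial-functor theorem from Mac Lane (Chapter IX, Section 3) and remarks that conditions (1) and (2) say the inclusion $P'\hookrightarrow P$ is final (so that limits over the opposite categories agree). Your argument is exactly the standard proof of that cited theorem specialized to posets --- condition (1) is nonemptiness and condition (2) is connectedness of the comma categories $a\downarrow P'$, and the zigzag well-definedness of the extended cone is precisely where connectedness enters --- so you have supplied, correctly and in full, the proof the paper outsources to the reference.
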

The condition in this lemma means that the inclusion $P'\rightarrow P$ is final.
Therefore the limit is preserved by the inclusion between opposite category of them.

Let $K$ be a simplicial complex with the vertex set $V$ and $\phi\colon V\rightarrow 2\zZ_{\geq 1}$. 
$K$ and $P_{\max}(K)$ are posets, so we also regard these as categories.
We define a functor from the opposite category of $K$, $K^{op}$, to the category of graded commutative algebras over $\zZ/p$ for a prime number $p$ as follows.
\begin{enumerate}
   \item For $\sigma\in K$, $F(\sigma)$ is the graded polynomial ring generated by vertices in $\sigma$, $\zZ/p[\sigma]$, with $|x|=\phi(x)$.
   \item For $\sigma <\tau$ in $P$, the morphism $F(\sigma<\tau)$ is the natural projection $\zZ/p[\tau]\rightarrow \zZ/p[\sigma]$.
\end{enumerate}
Then the following lemma holds.
\begin{lemma}\label{limit_alg_Stanley_Reisner_pre}
   About the functor above, there is an isomorphism
   \[
      \lim_{K^{op}}F\cong SR(K,\phi)\otimes \zZ/p.
   \]
\end{lemma}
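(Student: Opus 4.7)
The plan is to identify the Stanley-Reisner ring with the limit by writing down an explicit comparison morphism and then checking it is a bijection degree by degree. First I would construct $\psi\colon SR(K,\phi)\otimes\zZ/p \to \lim_{K^{op}} F$. For each $\sigma\in K$, the $\zZ/p$-algebra projection $\zZ/p[V]\to \zZ/p[\sigma]$ that kills the vertices outside $\sigma$ sends every squarefree generator $m$ of $I_K$ to $0$: since $\mathrm{supp}(m)\notin K$ and $\sigma\in K$, we must have $\mathrm{supp}(m)\not\subset\sigma$. Hence it descends to a graded algebra map $\pi_\sigma\colon SR(K,\phi)\otimes\zZ/p\to F(\sigma)$. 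The $\pi_\sigma$ are manifestly compatible with the structure maps $F(\sigma<\tau)$, so they assemble into the desired $\psi$.

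For injectivity, I would use the fact that $SR(K,\phi)\otimes\zZ/p$ has a $\zZ/p$-basis consisting of monomials $m$ with $\mathrm{supp}(m)\in K$. Suppose $f=\sum c_m m$ satisfies $\psi(f)=0$. For any $m$ with $c_m\neq 0$ one could take $\sigma=\mathrm{supp}(m)\in K$; then $m$ survives in $\pi_\sigma(f)$ (no other basis monomial in $f$ collapses onto it under $\pi_\sigma$ without sharing the same support), and $\pi_\sigma(f)=0$ forces $c_m=0$. Hence $f=0$.

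For surjectivity, given a compatible family $(x_\sigma)\in\lim F$, I would assemble its preimage one coefficient at a time. For each monomial $m$ with $\mathrm{supp}(m)\in K$ let $c_m$ be the coefficient of $m$ in $x_{\mathrm{supp}(m)}$. Compatibility along $\mathrm{supp}(m)\le\sigma$ implies $c_m$ equals the coefficient of $m$ in $x_\sigma$ whenever $\mathrm{supp}(m)\subset\sigma$, and monomials with $\mathrm{supp}(m)\not\subset\sigma$ do not appear in $x_\sigma\in\zZ/p[\sigma]$ at all. Since each homogeneous component involves only finitely many such monomials, $f=\sum c_m m$ is a well-defined homogeneous element of $SR(K,\phi)\otimes\zZ/p$ in every degree, and by construction $\pi_\sigma(f)=x_\sigma$ for all $\sigma$, giving $\psi(f)=(x_\sigma)$.

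There is no real obstacle here; the only subtlety worth flagging is that the limit must be understood degree by degree (which is fine because all transition maps $F(\sigma<\tau)$ are graded), so that the assembled $f$ lives in the graded algebra $SR(K,\phi)\otimes\zZ/p$ rather than in some completion. Everything else reduces to direct coefficient bookkeeping against the monomial basis indexed by faces of $K$.
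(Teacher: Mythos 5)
Your proof is correct, but it takes a genuinely different route from the paper's. The paper argues by induction on the number of simplices of $K$: removing a maximal simplex $\sigma$ expresses both $SR(K,\phi)\otimes\zZ/p$ and $\lim_{K^{op}}F$ as pullbacks of the corresponding data for $K\setminus\{\sigma\}$, $\zZ/p[\sigma]$ and $SR(\partial\sigma,\phi)\otimes\zZ/p$, and the isomorphism is deduced by comparing the two pullback squares (the verification that these really are pullbacks is left to the reader). You instead construct the canonical comparison map $\psi$ directly and check bijectivity against the monomial basis of $SR(K,\phi)\otimes\zZ/p$ indexed by monomials with support a face of $K$. Your key observations are all sound: the generators of $I_K$ die under every $\pi_\sigma$ because their supports are non-faces; a monomial $m$ with $c_m\neq 0$ is detected by $\pi_{\operatorname{supp}(m)}$ since projections never merge distinct monomials; and for surjectivity the compatibility condition along $\operatorname{supp}(m)\leq\sigma$ pins down every coefficient of every $x_\sigma$, using that a face of a face is a face. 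What your approach buys is a self-contained, induction-free argument that makes explicit exactly where gradedness and finiteness of each homogeneous component enter (so the assembled element lives in the graded ring and not a completion); what the paper's approach buys is a shorter write-up and a structural decomposition that mirrors how one builds $K$ cell by cell. Either proof is acceptable.
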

\begin{proof}
   We prove this lemma by induction on the number of simplices in $K$.
   Let $\sigma$ be a maximal simplex in $K$.
   Let $\partial \sigma$ be the boundary of $\sigma$.
   Then by the assumption of the induction, $\lim_{(K \setminus \{\sigma\})^{op}}F \cong SR(K\setminus \{\sigma\},\phi)\otimes \zZ/p$, $\lim_{(K_{\leq \sigma})^{op}}F \cong \zZ/p[\sigma]$ and $\lim_{(K_{<\sigma})^{op}}F \cong \left( SR(K\setminus\{\sigma\},\phi)\otimes \zZ/p \right)/(V\setminus \sigma)\cong SR(\partial \sigma,\phi)\otimes \zZ/p$.
   It is easy to see that the following diagrams are pullback diagrams
   \[
      \xymatrix{
         SR(K,\phi)\otimes \zZ/p \ar[r] \ar[d] & SR(K\setminus \{\sigma\},\phi)\otimes \zZ/p \ar[d] & & \lim_{K^{op}}F \ar[r] \ar[d]& \lim_{(K \setminus \{\sigma\})^{op}}F \ar[d] \\
         \zZ/p[\sigma] \ar[r]& SR(\partial \sigma,\phi)\otimes \zZ/p, & & \lim_{(K_{\leq \sigma})^{op}}F \ar[r]& \lim_{(K_{< \sigma})^{op}}F,
      }
   \]
   and the isomorphisms induces the maps between these square. So we obtain the isomorphism
   \[
      \lim_{P^{op}}F\cong SR(K,\phi)\otimes \zZ/p.
   \]
\end{proof}
By using Lemma \ref{final_functor_limit}, we can prove that the functor $F$ restricted to $P_{\max}(K)$ has same limit.
\begin{lemma}\label{limit_alg_Stanley_Reisner}
   About the functor above, there is an isomorphism
   \[
      \lim_{P_{\max}(K)^{op}}F\cong SR(K,\phi)\otimes \zZ/p.
   \]
\end{lemma}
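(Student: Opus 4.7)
The plan is to combine Lemma \ref{limit_alg_Stanley_Reisner_pre} (which identifies $SR(K,\phi)\otimes \zZ/p$ as the limit of $F$ over all of $K^{op}$) with Lemma \ref{final_functor_limit} (which says limits are preserved by final inclusions) applied to the inclusion $P_{\max}(K)\subset K$. So all the real work is verifying that the inclusion $P_{\max}(K)\subset K$ satisfies the two hypotheses of Lemma \ref{final_functor_limit}.

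For condition (1), I would observe that every simplex $\sigma\in K$ is contained in some maximal simplex $\tau$. Since $\tau$ can be written as the intersection of a single maximal simplex (itself), $\tau\in P_{\max}(K)$. Thus $\sigma\leq \tau\in P_{\max}(K)$, giving condition (1).

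For condition (2), the key observation is that $P_{\max}(K)$ is closed under intersections: if $\tau=\sigma_1\cap\cdots\cap \sigma_k$ and $\tau'=\sigma'_1\cap\cdots\cap \sigma'_l$ with each $\sigma_i,\sigma'_j$ maximal, then $\tau\cap \tau'=\sigma_1\cap\cdots\cap \sigma_k\cap \sigma'_1\cap\cdots\cap \sigma'_l$ is again an intersection of maximal simplices, hence lies in $P_{\max}(K)$. So given $\sigma\in K$ and $\tau,\tau'\in P_{\max}(K)$ with $\sigma\leq \tau,\tau'$, let $b_1=\tau\cap \tau'$. Since $\sigma\subset \tau$ and $\sigma\subset \tau'$ we have $\sigma\leq b_1$, and $b_1\leq \tau$, $b_1\leq \tau'$ by construction. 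This gives the length-one zigzag $\tau\geq b_1\leq \tau'$ required by condition (2).

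With both finality conditions verified, Lemma \ref{final_functor_limit} yields $\lim_{K^{op}}F\cong \lim_{P_{\max}(K)^{op}}F$, and combining this with Lemma \ref{limit_alg_Stanley_Reisner_pre} gives the stated isomorphism $\lim_{P_{\max}(K)^{op}}F\cong SR(K,\phi)\otimes \zZ/p$. There is no real obstacle here; the only thing to be careful about is the closure of $P_{\max}(K)$ under intersections, which is really just the statement that an intersection of intersections is an intersection.
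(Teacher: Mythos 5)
Your proof is correct and follows essentially the same route as the paper: the paper likewise verifies the two finality conditions of Lemma \ref{final_functor_limit} by noting that $P_{\max}(K)$ contains all maximal simplices and is closed under intersections, and then combines this with Lemma \ref{limit_alg_Stanley_Reisner_pre}. Your write-up is in fact slightly more careful than the paper's, which contains a typo in the direction of the inequality in condition (1).
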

\begin{proof}
   Since $P_{\max}(K)\subset K$ contains all maximal cells, for $\sigma\in K$ there exists $\tau \in P_{\max}(K)$ with $\tau \leq \sigma$.
   Since $P_{\max}(K)\subset K$ is closed under intersection, for $\sigma \in K$ and $\tau,\tau' \in P_{\max}(K)$ with $\sigma \leq \tau,\tau'$, the intersection of $\tau$ and $\tau'$, $\tau\cap \tau'$, satisfies $\sigma \leq \tau\cap \tau'\leq \tau,\tau'$ and $\tau\cap \tau'\in  P_{\max}(K)$. 
   Therefore $P_{\max}(K)\subset K$ satisfies the condition in Lemma \ref{final_functor_limit}, and by combining Lemma \ref{final_functor_limit} and Lemma \ref{limit_alg_Stanley_Reisner_pre} we can obtain this lemma. 
\end{proof}

\begin{proposition}\label{limit_Steenrod_Stanley_Reisner}
   Let $K$ be a simplicial complex with vertex set $V$ and $\phi\colon V\rightarrow \zZ_{\geq 1}$.
Suppose that:
   \begin{enumerate}
      \item For all $\sigma \in P_{\max}(K)$, $\zZ/p[\sigma]$ has an unstable algebra structure over mod-$p$ Steenrod algebra.
      \item For $\tau, \sigma \in P_{\max}(K)$ with $\tau \leq \sigma$, the projection $\zZ/p[\sigma]\rightarrow \zZ/p[\tau]$ is a homomorphism of unstable algebra over the Steenrod algebra
   \end{enumerate}
   Then $SR(K,\phi)\otimes \zZ/p$ has an unstable algebra structure over mod-$p$ Steenrod algebra.
\end{proposition}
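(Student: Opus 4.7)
The plan is to combine the two limit lemmas just established. By Lemma \ref{limit_alg_Stanley_Reisner}, we already know that $SR(K,\phi)\otimes \zZ/p$ is the limit of the functor $F\colon P_{\max}(K)^{op}\rightarrow \mathbf{GrAlg}_{\zZ/p}$ sending $\sigma$ to $\zZ/p[\sigma]$ and inclusions to the obvious projections. So the task reduces to upgrading this limit to a limit in the category of unstable algebras over the mod-$p$ Steenrod algebra.

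First I would use hypothesis (1) and (2) to refine $F$ to a functor $\widetilde{F}\colon P_{\max}(K)^{op}\rightarrow \mathbf{Unst}\SA_p$ into the category of unstable algebras over $\SA_p$, whose composition with the forgetful functor to $\mathbf{GrAlg}_{\zZ/p}$ recovers $F$. Hypothesis (1) ensures each object $\zZ/p[\sigma]$ can be lifted, and hypothesis (2) ensures each morphism lifts to a morphism of unstable algebras. Functoriality on the morphism side is automatic because the underlying graded algebra maps are already functorial in $F$, and a map of unstable algebras is determined by its map of underlying algebras.

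Next I would apply Lemma \ref{limit_alg_Steenrod_geberal} to $\widetilde{F}$ to produce a limit $A = \lim_{P_{\max}(K)^{op}} \widetilde{F}$ in the category of unstable algebras over $\SA_p$. The key content of that lemma is that the forgetful functor from unstable $\SA_p$-algebras to graded $\zZ/p$-algebras preserves such limits; therefore the underlying graded algebra of $A$ is $\lim_{P_{\max}(K)^{op}} F$, which by Lemma \ref{limit_alg_Stanley_Reisner} is exactly $SR(K,\phi)\otimes \zZ/p$. Transporting the unstable $\SA_p$-action on $A$ along this isomorphism equips $SR(K,\phi)\otimes \zZ/p$ with the desired unstable algebra structure over the Steenrod algebra.

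There is not really a hard step here: the whole point of developing the two preceding lemmas was so that this proposition becomes a direct assembly. The only thing to be mindful of is that the lift $\widetilde{F}$ is genuinely a functor, and this is trivial because all data (objects and morphisms) are given by hypothesis and composability is inherited from $F$. So the proof is essentially one paragraph: lift $F$ to $\widetilde{F}$ using (1) and (2), take the limit using Lemma \ref{limit_alg_Steenrod_geberal}, and identify the underlying algebra using Lemma \ref{limit_alg_Stanley_Reisner}.
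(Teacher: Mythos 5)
Your proposal is correct and follows exactly the paper's argument: lift $F$ to a functor into unstable algebras over $\SA_p$ using hypotheses (1) and (2), form the limit via Lemma \ref{limit_alg_Steenrod_geberal} (which is preserved by the forgetful functor), and identify the underlying graded algebra with $SR(K,\phi)\otimes \zZ/p$ by Lemma \ref{limit_alg_Stanley_Reisner}. No differences worth noting.
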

\begin{proof}
   By the condition in this statement, we can define a functor $F$ from $P_{\max}(K)^{op}$ to the category of unstable algebras over the Steenrod algebra as $F(\sigma)\cong \zZ/p[\sigma]$ and $F(\sigma<\tau)$ the projection for $\sigma<\tau$ in $P_{\max}(K)$.
   By Lemma \ref{limit_alg_Steenrod_geberal} there exists a limit of $F$ and the limit is preserved by the forgetful functor to the category of algebra over $\zZ/p$.
   By Lemma \ref{limit_alg_Stanley_Reisner} the limit of $F$ is isomorphic to $SR(K,\phi)$ as algebra over $\zZ/p$.
   Therefore we obtain that $SR(K,\phi)$ has an unstable algebra structure over mod-$p$ Steenrod algebra.
\end{proof}

\subsection{Steenrod actions from span colourings}

\begin{lemma}\label{construct_s}
Suppose $G$ is a graph with vertex set $\{ y_1,\dots y_m \}$, and let 
$f\colon \{ y_1,\dots y_m \} \rightarrow \zZ/2\langle x_1,\dots x_n \rangle$ be an $n$-span colouring. 
 Then there are maps 
   $s_{i}\circ p\colon \zZ/2\langle x_1,\dots x_n \rangle \rightarrow \zZ/2\langle y_i \rangle$ such that $s_i\circ f(y_i)=y_i$ and for all $y_j\in N(y_i)$, $s_i\circ f(y_j)=0$.
\end{lemma}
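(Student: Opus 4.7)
The plan is to reduce the lemma to elementary linear algebra over $\zZ/2$ using the defining property of a weak span colouring. By Proposition \ref{same-chromatic-number} we may regard the given $n$-span colouring $f$ as a weak span colouring, so for every vertex $y_i$ we have $f(y_i)\notin \langle f(N(y_i))\rangle$ as vectors in $\zZ/2\langle x_1,\dots,x_n\rangle$. The construction of $s_i\circ p$ will then be a standard dual-basis/quotient argument applied one vertex at a time.

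First, for each fixed $i$ I would set $W_i = \langle f(N(y_i))\rangle \subset \zZ/2\langle x_1,\dots,x_n\rangle$. Since $f(y_i)\notin W_i$, the image of $f(y_i)$ in the quotient $\zZ/2\langle x_1,\dots,x_n\rangle/W_i$ is nonzero. I choose a $\zZ/2$-linear functional
\[
p_i\colon \zZ/2\langle x_1,\dots,x_n\rangle \longrightarrow \zZ/2
\]
which vanishes on $W_i$ and sends $f(y_i)$ to $1$; concretely, extend a basis of $W_i$ by $f(y_i)$ to a basis of $\zZ/2\langle x_1,\dots,x_n\rangle$ and let $p_i$ be the dual coordinate corresponding to $f(y_i)$.

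Next, I define $s_i\colon \zZ/2 \to \zZ/2\langle y_i\rangle$ to be the $\zZ/2$-linear map $1\mapsto y_i$, and take the desired map to be $s_i\circ p_i$. By construction $s_i\circ p_i(f(y_i)) = s_i(1) = y_i$, and for every $y_j \in N(y_i)$ one has $f(y_j)\in W_i$ so $s_i\circ p_i(f(y_j)) = s_i(0) = 0$, as required. There is essentially no obstacle: the lemma is a direct translation of the weak span colouring condition \emph{``$f(y_i)$ is not in the span of its neighbours''} into the dual statement \emph{``there is a linear functional separating $f(y_i)$ from its neighbours''}, and the only content is packaging the resulting functional as a map into $\zZ/2\langle y_i\rangle$.
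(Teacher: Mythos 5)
Your proposal is correct and is essentially the same argument as the paper's: the paper forms the quotient $M=\zZ/2\langle x_1,\dots,x_n\rangle/\langle f(N(y_i))\rangle$ and picks a functional $\overline{s_i}$ on $M$ sending the (nonzero) image of $f(y_i)$ to $y_i$, which is exactly your linear functional vanishing on $W_i$ and hitting $f(y_i)$, packaged through the quotient map. No substantive difference.
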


\begin{proof}
Consider the exact sequence 
   \[
   0\rightarrow \zZ/2\langle  f( N(y_i))\rangle \rightarrow \zZ/2\langle x_1,\dots x_n \rangle 
   \stackrel{p}{\rightarrow} M\rightarrow 0
   \]
   By the definition of span colouring $p\circ f(y_i)\in M$ is not $0$ 
   so there is a linear map $\overline{s_i}\colon M\rightarrow \zZ/2\langle y_i \rangle$ such that 
   $ \overline{s_i}\circ p\circ f(y_i)=y_i$. So letting 
    $s_{i}=\overline{s_i}\circ p\colon \zZ/2\langle x_1,\dots x_n \rangle \rightarrow \zZ/2\langle y_i \rangle$ we get that $s_i\circ f(y_i)=y_i$ and for all $y_j\in N(y_i)$, $s_i\circ f(y_j)=0$.
\end{proof}

\begin{theorem}\label{construct_Steenrod_mod_str}
Suppose $L$ is a simplicial complex such that 
    $s_2\chi(L^1)\leq n$. 
   Then $A(n,L) \otimes \zZ/2$ has an $\SA_2$ action. 
\end{theorem}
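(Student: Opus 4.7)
The plan is to invoke Proposition \ref{limit_Steenrod_Stanley_Reisner} with $K = \Delta^{n-1} \ast L$. First observe that every maximal simplex of $K$ contains $\{x_1, \ldots, x_n\}$, and since $P_{\max}(K)$ consists of intersections of maximal simplices, every $\sigma \in P_{\max}(K)$ has the form $\{x_1, \ldots, x_n\} \cup \tau$ for some simplex $\tau$ of $L$. The vertices of $\tau$ form a clique in $L^1$, so by the defining property of the given $n$-span colouring $f$ of $L^1$ the vectors $\{f(y) : y \in \tau\}$ are linearly independent in $\zZ/2\langle x_1, \ldots, x_n\rangle$; in particular $k := |\tau| \leq n$.

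Apply Lemma \ref{construct_s} once, globally, to fix linear maps $s_i\colon \zZ/2\langle x_1, \ldots, x_n\rangle \to \zZ/2\langle y_i\rangle$ with $s_i(f(y_i)) = y_i$ and $s_i(f(y_j)) = 0$ for every $y_j$ adjacent to $y_i$ in $L^1$. For $\sigma = \{x_1, \ldots, x_n, y_{i_1}, \ldots, y_{i_k}\} \in P_{\max}(K)$, define a mod-$2$ Steenrod action on $\zZ/2[\sigma]$ by specifying on generators
\[
\Sq^2(x_l) \;=\; \sum_{j=1}^{k} s_{i_j}(x_l), \qquad \Sq^2(y_{i_j}) \;=\; 0, \qquad \Sq^4(y_{i_j}) \;=\; y_{i_j}\, f(y_{i_j}),
\]
and extending via the Cartan formula and unstability.

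To see these prescriptions extend to a well-defined unstable algebra over $\SA_2$ I change basis. Writing $u_j := f(y_{i_j})$, the $u_j$ are linearly independent, and since $(s_{i_1}, \ldots, s_{i_k})\colon \zZ/2\langle x_1, \ldots, x_n\rangle \to \zZ/2\langle y_{i_1}, \ldots, y_{i_k}\rangle$ is surjective with an $(n-k)$-dimensional kernel, I choose a basis $v_{k+1}, \ldots, v_n$ of this kernel so that $\{u_1, \ldots, u_k, v_{k+1}, \ldots, v_n\}$ is a basis of $\zZ/2\langle x_1, \ldots, x_n\rangle$. Direct calculation using the clique property of $\tau$ and the defining equations of the $s_i$ gives $\Sq^2(u_j) = \sum_{j'} s_{i_{j'}}(f(y_{i_j})) = y_{i_j}$ and $\Sq^2(v_l) = 0$, so the action matches the standard structure on $\bigotimes_{j=1}^k H^*(BSU(3); \zZ/2) \otimes \bigotimes_{l=k+1}^n H^*(BSU(2); \zZ/2)$ supplied by Lemma \ref{Steenrod_structure_on_SU(3)}; this is an unstable $\SA_2$-algebra.

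Compatibility of these actions with the projections in $P_{\max}(K)$ is transparent: for $\sigma' \subset \sigma$ the $y$-vertices of $\sigma'$ are a subset of those of $\sigma$, the formula for $\Sq^4(y_{i_j})$ is independent of $\sigma$, and the sum defining $\Sq^2(x_l)$ for $\sigma$ restricts under the quotient map that kills the surplus $y_{i_j}$'s to the analogous sum for $\sigma'$. Proposition \ref{limit_Steenrod_Stanley_Reisner} then yields an unstable $\SA_2$-structure on $A(n,L) \otimes \zZ/2$. I expect the crux to be the basis-change argument of the third paragraph---verifying that our ad hoc Steenrod prescription really matches a tensor product of $BSU(3)$'s and $BSU(2)$'s---after which the rest is bookkeeping.
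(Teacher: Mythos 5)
Your proposal is correct and follows essentially the same route as the paper: both define the action by $\Sq^4(y_i)=y_if(y_i)$, $\Sq^2(y_i)=0$, $\Sq^2(x_l)=\sum_j s_j(x_l)$ using the sections from Lemma \ref{construct_s}, identify each $\zZ/2[\sigma]$ for $\sigma\in P_{\max}(K)$ with a tensor product of copies of $H^*(BSU(3);\zZ/2)$ and $H^*(BSU(2);\zZ/2)$ via the basis change $\{f(y_{i_1}),\dots,f(y_{i_k}),v_{k+1},\dots,v_n\}$ as in Lemma \ref{Steenrod_structure_on_SU(3)}, check compatibility with the projections, and conclude by Proposition \ref{limit_Steenrod_Stanley_Reisner}. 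The only cosmetic difference is that the paper writes the global $s=\sum_i s_i$ first and then projects, whereas you sum only over the $y$'s in each $\sigma$; these agree after projection.
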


\begin{proof}
   By assumption there is a weak span colouring of $L^1$, $f\colon V(L^1)=\{ y_1,\dots y_m \} \rightarrow \zZ/2\langle x_1,\dots x_n \rangle$.

   Let $s\colon \zZ/2\langle x_1,\dots x_n \rangle \rightarrow \zZ/2\langle y_1,\dots y_m \rangle$ be defined by $s(x)=\sum_{i} s_i(x)$, where the $s_i$ come from Lemma \ref{construct_s}
   Now give $SR(K,\phi)$ the structure of an unstable algebra over mod-$2$ Steenrod algebra by defining 
  \begin{equation}\label{steen!}
  \Sq^4(y_i)=y_i f(y_i),\Sq^2(y_i)=0, \ {\rm and}\  \Sq^2(x_i)=s(x_i).
  \end{equation}

   As observed after Lemma \ref{quotient_to_sum_of_maximal_cell} these equations would determine an action if the equations are compatible with relations, we show this indirectly using our limit description of $SR(K,\phi)$.
   
   Let $\sigma=\{x_1,\dots ,x_n,y_{j_1},\dots,y_{j_l}\} \in P_{\max}(K)$, and 
   $\pi\colon \zZ/2\langle y_1,\dots y_m\rangle\rightarrow \zZ/2\langle y_{j_1},\dots ,y_{j_l} \rangle$ 
   be the projection. We define an action on 
    $\zZ/2[\sigma]$ using formulas \ref{steen!} which can easily seen to be well defined on the projection. 
   Since the $y_{j_1},\dots,y_{j_l}$ are all in neighborhoods of each other, $\pi\circ s\circ f$ is identity on $\zZ/2\langle y_{j_1},\dots,y_{j_l} \rangle$ and let
   $\{x'_1,\dots x'_{n-l}\}$ be a basis of the kernel of the composition of $\pi\circ s$.
   
   Then there is a isomorphism
   \[
      \zZ/2[\sigma]\cong \zZ/2[f(y_{i_1}),\dots , f(y_{j_l}), x'_1,\dots x'_{n-l}, y_{j_1},\dots,y_{j_l}],
   \]
   and the Steenrod algebra action defined above induces the action same as in Proposition \ref{Steenrod_structure_on_SU(3)}.
   Therefore the formulas \ref{steen!} give $\zZ/2[\sigma]$ an unstable algebra structure over mod-$2$ Steenrod algebra. We just need to observe that these formulas are compatible with maps in our limit diagram. 
   
   For $\sigma>\tau \in P_{\max}(K)$, the projection $\pi\colon \zZ/2[\sigma]\rightarrow \zZ/2[\tau]$ is an algebra map, so we just need to see it commutes with the Steenrod algebra action. Since $ \zZ/2[\sigma]$ has a basis of monomials it is enough to show for any $k$ and any monomial 
   $\alpha=x_1^{\alpha_1} \dots x_n^{\alpha_n}y_1^{\beta_1}\dots y_m^{\beta_m}$, 
   $\Sq^k(\pi(\alpha))=\pi\Sq^k(\alpha)$. To see this expand $\Sq^k(\alpha)$ using the Cartan formula. The result will be a sum of products of terms of the form $\Sq^l(y_i)$ with $l=0,2,4$ and $\Sq^l(x_i)$ with $l=0,2$. For such terms we can directly check using \ref{steen!} that $\pi\Sq^l(y_i)=\Sq^l(\pi(y_i))$ and $\pi\Sq^l(x_i)=\Sq^l(\pi(x_i))$. Also using the Cartan formulas with the other side gives the desired identity, and shows the projection is a map of unstable algebras over mod-$2$ Steenrod algebra. 
   
   Thus this case satisfies the condition of Lemma \ref{limit_Steenrod_Stanley_Reisner}, and we obtain that $SR(K,\phi)$ has an $\SA_2$ action by Lemma \ref{limit_Steenrod_Stanley_Reisner}.
 \end{proof}
 
 When the simplicial complex is a graph the last theorem is the converse to Theorem \ref{Steenrod_alg_chromatic_number}, so we get a theorem from the introduction. 
 
\begin{theorem}\label{sameSteen}
For any graph $G$, 
$A(n,G)\otimes \zZ/2$ has an $\SA_2$ action if and only if $s_2\chi(G)\leq n$. 
\end{theorem}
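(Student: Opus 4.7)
The plan is to observe that this biconditional is essentially the conjunction of two theorems already established earlier in the paper, so the proof amounts to unpacking how they fit together when the simplicial complex is a graph.

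For the forward implication, I would simply invoke Theorem \ref{Steenrod_alg_chromatic_number}: it already asserts that for any graph $G$, if $A(n,G)\otimes \zZ/2$ carries an unstable $\SA_2$ action, then $s_2\chi(G)\leq n$. No extra work is needed here.

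For the reverse implication, I would apply Theorem \ref{construct_Steenrod_mod_str} to the simplicial complex $L=G$. The only thing to notice is that when $G$ is viewed as a simplicial complex (with only $0$- and $1$-simplices), its $1$-skeleton $L^1$ is $G$ itself, so the hypothesis $s_2\chi(L^1)\leq n$ becomes exactly $s_2\chi(G)\leq n$. The conclusion of that theorem then produces the desired $\SA_2$ action on $A(n,G)\otimes \zZ/2$.

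There is no real obstacle in this step since the substantive work was done in the two cited theorems: Theorem \ref{Steenrod_alg_chromatic_number} is the hard direction (extracting a span colouring from a Steenrod action via the Adem relation $\Sq^2\Sq^4 = \Sq^6+\Sq^5\Sq^1$ applied to each $y_i$, together with the reduction to the minimum-degree-$\geq 2$ subgraph), while Theorem \ref{construct_Steenrod_mod_str} is the constructive direction (using the colouring and the functions $s_i$ from Lemma \ref{construct_s} to define $\Sq$-operations compatibly across the limit description of $SR(K,\phi)\otimes \zZ/2$). The present theorem is simply the packaged statement of their conjunction in the special case where the simplicial complex equals its $1$-skeleton.
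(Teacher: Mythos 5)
Your proposal is correct and matches the paper's proof exactly: the paper also deduces Theorem \ref{sameSteen} by combining Theorem \ref{Steenrod_alg_chromatic_number} (Steenrod action implies $s_2\chi(G)\leq n$) with Theorem \ref{construct_Steenrod_mod_str} applied to $L=G$, where $L^1=G$. Your additional remark that the hypothesis $s_2\chi(L^1)\leq n$ specializes to $s_2\chi(G)\leq n$ for a graph is the only detail worth making explicit, and you have done so.
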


\begin{proof}
Theorems  \ref{Steenrod_alg_chromatic_number} and \ref{construct_Steenrod_mod_str} 
provide the two directions. 
\end{proof}

In a similar way to Theorem \ref{construct_Steenrod_mod_str} we can prove the following theorem.
\begin{theorem}\label{construct_Steenrod_mod_str_p_ver}
   Let $p$ be a prime number and with $p\equiv 5 \mod 6$ and $L$ be a simplicial complex such 
   that $s_p\chi(L^1)\leq n$.
   Then $A(n,L) \otimes \zZ/p$ has an $\SA_p$ action. 
\end{theorem}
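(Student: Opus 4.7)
The plan is to mirror the proof of Theorem \ref{construct_Steenrod_mod_str}, replacing the Steenrod squares by the reduced power operations $\Pe^k$ and the Wu formula for $H^*(BSU(3); \zZ/2)$ by its mod-$p$ analogue. The condition $p \equiv 5 \bmod 6$ keeps us away from the primes $2$ and $3$ dividing the order of the Weyl group of $SU(3)$, so $H^*(BSU(3); \zZ/p) \cong \zZ/p[c_2, c_3]$ with $|c_2|=4$, $|c_3|=6$ carries the generic polynomial $\SA_p$-structure, and the Bockstein acts trivially since everything is in even degree.

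First I would establish the mod-$p$ analogue of Lemma \ref{construct_s}. Given a $p$-span colouring $f \colon \{y_1, \ldots, y_m\} \to \zZ/p \langle x_1, \ldots, x_n\rangle$, the same short exact sequence argument, applied verbatim over the field $\zZ/p$, produces linear maps $s_i \colon \zZ/p \langle x_1, \ldots, x_n\rangle \to \zZ/p\langle y_i \rangle$ with $s_i(f(y_i)) = y_i$ and $s_i(f(y_j)) = 0$ whenever $y_j \in N(y_i)$. Set $s = \sum_i s_i$. Next I would record the explicit formulas for $\Pe^k$ on $\zZ/p[c_2,c_3]$, derived from Cartan and $\Pe^1(x) = x^p$ on the formal Chern roots together with $c_1 = 0$: this gives $\Pe^k(c_3) = c_3 \cdot Q_k(c_2, c_3)$ for $k=1,2$ (with $\Pe^3(c_3) = c_3^p$ and $\Pe^k(c_3) = 0$ for $k > 3$), and $\Pe^1(c_2) = R(c_2, c_3)$ (with $\Pe^2(c_2) = c_2^p$ and $\Pe^k(c_2)=0$ for $k > 2$).

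With these formulas in hand, I would define an unstable $\SA_p$-action on $A(n,L) \otimes \zZ/p$ on generators by the recipe $\Pe^k(y_i) = y_i \cdot Q_k(f(y_i), y_i)$ (mirroring the $BSU(3)$ formula with $c_3 \leftrightarrow y_i$, $c_2 \leftrightarrow f(y_i)$) and $\Pe^k(x_j) = R_k(x_j, s(x_j))$ (mirroring the formula for $c_2$, with the $s_i$'s encoding how $x_j$ interacts with each $y_i$). Exactly as in the proof of Theorem \ref{construct_Steenrod_mod_str} and Lemma \ref{Steenrod_structure_on_SU(3)}, on each maximal simplex $\sigma = \{x_1, \ldots, x_n, y_{j_1}, \ldots, y_{j_l}\}$ the linear independence of $f(y_{j_1}), \ldots, f(y_{j_l})$ (which is forced by the clique $\{y_{j_1}, \ldots, y_{j_l}\}$ being span-coloured) lets us choose a complementary basis $x'_1, \ldots, x'_{n-l}$ of $\ker(\pi \circ s)$, identify $\zZ/p[\sigma]$ with a tensor product of $l$ copies of $H^*(BSU(3); \zZ/p)$ and $n-l$ copies of $H^*(BSU(2); \zZ/p)$, and check that the defining formulas transport to the canonical BSU Steenrod action.

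The main obstacle, as in the mod-$2$ case, is to verify well-definedness globally: that the formulas give consistent $\SA_p$-actions on each $\zZ/p[\sigma]$ and that the projections $\zZ/p[\sigma] \to \zZ/p[\tau]$ for $\tau \le \sigma$ in $P_{\max}(K)$ are maps of unstable algebras over $\SA_p$. The former is handled by the tensor-product identification above; the latter follows because the formulas are defined globally in terms of $f$ and $s$, and applying the Cartan formula to any monomial and then projecting yields the same answer as first projecting and then applying Cartan, since $\pi \Pe^k(y_i) = \Pe^k(\pi(y_i))$ and $\pi \Pe^k(x_j) = \Pe^k(\pi(x_j))$ can be checked term by term. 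Invoking Proposition \ref{limit_Steenrod_Stanley_Reisner} then assembles these local actions into the desired unstable $\SA_p$-action on $A(n,L) \otimes \zZ/p$, completing the proof.
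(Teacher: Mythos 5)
Your overall strategy --- mod-$p$ Wu/Girard formulas for $H^*(BSU(3);\zZ/p)$, local structures on the polynomial rings $\zZ/p[\sigma]$ for $\sigma\in P_{\max}(K)$, assembly via Proposition \ref{limit_Steenrod_Stanley_Reisner} --- is exactly the paper's, which records the explicit formulas as Lemma \ref{Steenrod_structure_on_SU(3)_modp} and declares the rest ``similar'' to the mod-$2$ proof. However, there is a genuine gap at the one point where you commit to a concrete formula. The definition $\Pe^k(x_j)=R_k(x_j,s(x_j))$ does \emph{not} transport to the canonical $BSU(3)^{\otimes l}\otimes BSU(2)^{\otimes(n-l)}$ structure under the change of basis to $\{f(y_{j_1}),\dots,f(y_{j_l}),x'_1,\dots,x'_{n-l}\}$. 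Unlike $\Sq^2(c_2)=c_3$, the mod-$p$ formula $\Pe^1(c_2)=R_1(c_2,c_3)$ contains the term $c\,c_2^{(p+1)/2}$ and is nonlinear in $c_2$; since $\Pe^1$ is additive, writing $f(y_i)=\sum_j\lambda_jx_j$ gives $\Pe^1(f(y_i))=\sum_j\lambda_jR_1(x_j,s(x_j))$, which is not $R_1(f(y_i),y_i)$. Concretely, for $p=5$ one has $\Pe^1(c_2)=2c_2^3+2c_3^2$ and $\Pe^1(c_3)=2c_2^2c_3$; take $n=2$, a single vertex $y$ with $f(y)=x_1+x_2$, $s(x_1)=y$, $s(x_2)=0$. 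Your recipe gives $\Pe^1(x_1)=2x_1^3+2y^2$, $\Pe^1(x_2)=2x_2^3$, $\Pe^1(y)=2(x_1+x_2)^2y$, and then the Cartan formula yields $\Pe^1\Pe^1(x_1)=2x_1^5+2x_1^2y^2+3(x_1+x_2)^2y^2$, whereas instability forces $\Pe^2(x_1)=x_1^5$, so the Adem relation $\Pe^1\Pe^1=2\Pe^2$ fails (the discrepancy $\bigl(x_1x_2+3x_2^2\bigr)y^2$ is nonzero in the polynomial ring). So the operations you write down are not an $\SA_5$-action at all, and the step ``check that the defining formulas transport to the canonical BSU Steenrod action'' is exactly the step that breaks.

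The repair is to abandon a single global formula on the degree-$4$ generators: define the action on each $\zZ/p[\sigma]$ with respect to the adapted basis (declare $\Pe^1(f(y_{j_a}))=R_1(f(y_{j_a}),y_{j_a})$, give the complementary generators the $BSU(2)$ action, and extend additively over $\langle x_1,\dots,x_n\rangle$), and then verify directly that each projection $\zZ/p[\sigma]\to\zZ/p[\tau]$ is a map of $\SA_p$-algebras, using that $f(y_s)\in\ker(\pi_\tau\circ s)$ whenever $y_s\in\sigma\setminus\tau$ (because $s_t(f(y_s))=0$ for neighbours $y_t$ of $y_s$). This compatibility check --- which is nontrivial precisely because the mod-$p$ $BSU(2)$ structure $\Pe^1(x')=\pm2(x')^{(p+1)/2}$ is itself nonlinear, so the local structure depends on the chosen splitting --- is the genuinely new content of the mod-$p$ case, and it is missing from your write-up (the paper, to be fair, only asserts it as well). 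As written, your proof does not go through.
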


The statement in this theorem is trivial when $p=3$ or $p\equiv 1 \mod 6$, because in these cases the polynomial ring generated by degree $6$ elements has an unstable algebra structure over mod-$p$ Steenrod algebra (cf. \cite{CE}).
To prove this theorem, we use the following lemma instead of Lemma \ref{Steenrod_structure_on_SU(3)}.
\begin{lemma}\label{Steenrod_structure_on_SU(3)_modp}
   Let $\zZ/p[x_1,\dots x_n,y_1,\dots y_m]$ be a polynomial ring  with $|x_i|=4,|y_i|=6$ and $n\geq m$.
   We define a Steenrod algebra action on this polynomial ring as follows
   \begin{align*}
      \Pe^1(y_i)&=2\sum_{2i_2+3i_3=p+2}(-1)^{i_2+i_3+1}\frac{(i_2+i_3-1)!}{i_2!i_3!}x_i^{i_2}y_i^{i_3}\\
      \Pe^1(x_i)&=
      \begin{cases}
         \sum_{2i_2+3i_3=p+1}(-1)^{i_2+i_3+1}\frac{(i_2+i_3-1)!}{i_2!i_3!}x_i^{i_2}y_i^{i_3} \quad &(i\leq m)\\
         0 \quad &(m< i),
      \end{cases}
   \end{align*}      
   then $\zZ/p[x_1,\dots x_n,y_1,\dots y_m]$ becomes an unstable algebra over mod-$p$ Steenrod algebra induced by this action.
\end{lemma}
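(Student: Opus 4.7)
The plan is to mirror the proof of Lemma \ref{Steenrod_structure_on_SU(3)}: identify the polynomial ring equipped with the action described in the statement with a tensor product of copies of $H^*(BSU(3); \zZ/p)$ and $H^*(BSU(2); \zZ/p)$, and transport the unstable $\SA_p$-structure from these cohomology rings. Since the realization $BSU(k)$ exists as an actual space, its mod-$p$ cohomology automatically carries an unstable algebra action over $\SA_p$ which satisfies the Cartan formula, the Adem relations, and the instability condition; and tensor products of unstable algebras over $\SA_p$ remain unstable algebras over $\SA_p$.

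First I would recall that $H^*(BSU(3); \zZ/p) \cong \zZ/p[c_2, c_3]$ with $|c_2|=4$ and $|c_3|=6$, and similarly $H^*(BSU(2); \zZ/p) \cong \zZ/p[c_2]$ with $|c_2|=4$. Sending $c_2 \mapsto x_i$ and $c_3 \mapsto y_i$ in the $i$-th tensor factor (for $i \leq m$) and $c_2 \mapsto x_i$ (for $m < i \leq n$) produces an isomorphism of graded rings
\[
   \zZ/p[x_1,\dots, x_n, y_1,\dots,y_m] \;\cong\; \bigotimes_{i=1}^m H^*(BSU(3); \zZ/p) \;\otimes\; \bigotimes_{i=m+1}^n H^*(BSU(2); \zZ/p).
\]
The right-hand side inherits a canonical unstable $\SA_p$-structure from the topological source, and pulling back along this isomorphism defines an unstable $\SA_p$-action on the polynomial ring.

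The core technical point is to verify that the transported $\mathcal{P}^1$-action agrees with the explicit formulas stated in the lemma. For degree reasons, $\mathcal{P}^1(y_i)$ lies in the degree $2p+4$ part of $\zZ/p[x_i, y_i]$, whose monomial basis consists of the $x_i^{i_2} y_i^{i_3}$ with $2i_2 + 3i_3 = p+2$; analogously for $\mathcal{P}^1(x_i)$ with $2i_2 + 3i_3 = p+1$. Thus the shape of the formulas is forced, and only the coefficients need checking. These coefficients can be computed from the Wu formula for Chern classes of $BSU(n)$: writing $c_2, c_3$ in terms of the elementary symmetric polynomials in the Chern roots, applying the total Steenrod power $\mathcal{P}^t$ (which acts on a Chern root $\alpha$ by $\alpha \mapsto \alpha + \alpha^p t$), and re-expressing the result in the basis $\{x_i^{i_2} y_i^{i_3}\}$, gives precisely the coefficients $(-1)^{i_2+i_3+1}\frac{(i_2+i_3-1)!}{i_2!i_3!}$ (up to the overall factor of $2$ on $\mathcal{P}^1(y_i)$); this is the combinatorial identity carried out for instance in Shay \cite{Sh} and used in \cite{CE}.

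The main obstacle is simply this coefficient identification — it is an explicit, slightly painful computation rather than a conceptual difficulty. Once that is in hand, the unstable condition ($\mathcal{P}^k(z) = z^p$ for $|z| = 2k$, $\mathcal{P}^k(z) = 0$ for $2k > |z|$), the Cartan formula, and the vanishing of the Bockstein on even-degree generators all come for free from the topological model, and the isomorphism above transports the full unstable algebra structure to $\zZ/p[x_1,\dots,x_n, y_1,\dots,y_m]$, completing the proof.
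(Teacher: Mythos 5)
Your proposal is correct and follows essentially the same route as the paper: the paper's justification for this lemma is precisely the remark that the stated action is the one on $H^*(BSU(3);\zZ/p)$ given by the mod-$p$ Wu formula (Shay) or Girard's formula, combined with the tensor-product decomposition into copies of $H^*(BSU(3);\zZ/p)$ and $H^*(BSU(2);\zZ/p)$ exactly as in the mod-$2$ case of Lemma \ref{Steenrod_structure_on_SU(3)}. Your additional observations — that the monomial shape of $\Pe^1(x_i)$ and $\Pe^1(y_i)$ is forced by degree and only the coefficients require the Wu/Girard computation, and that instability, Cartan, and the vanishing of $\beta$ are inherited from the topological model — are consistent with, and slightly more explicit than, what the paper records.
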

The definition of the action corresponds to the action on $H^*(BSU(3))$ which is computed by the mod-$p$ Wu formula \cite{S} or Girard's formula.
By using this lemma and Proposition \ref{limit_Steenrod_Stanley_Reisner}, we can prove Theorem \ref{construct_Steenrod_mod_str_p_ver} by similar way to the proof of Theorem \ref{construct_Steenrod_mod_str}.

Since $\zZ/3[x]$ with $|x|=4$ and $\zZ/3[y]$ with $|y|=6$ have $\SA_3$ actions so the converse of 
Theorem  \ref{construct_Steenrod_mod_str_p_ver} corresponding to Theorem \ref{Steenrod_alg_chromatic_number} fails wildly. In fact for any $SR(K,\phi)$ generated in degrees $4$ and $6$, $SR(K,\phi)\otimes \zZ/3$ has an $\SA_3$ action.

\section{Comparing combinatorial and topological invariants}\label{sec:Proof_of_the_main_theorem}

In this section we introduce a topological chromatic number $\chi_{Top}$ and show for any graph $G$
$s_2\chi(G)\leq \chi_{Top}(G)\leq \chi(G)$. We also classify the realizable Stanley-Reisner algebras with two degree
$4$ generators and any number of degree $6$ generators. 

The following is Theorem $1.3$ in \cite{Ta} applied to a Stanley-Reisner ring generated by degree $4$ and $6$ elements.

\begin{theorem}\label{previous_theorem}
   Let $SR(K,\phi)\cong \zZ[x_1,\dots x_n,y_1,\dots y_m]/I$ be a graded Stanley-Reisner ring for a simplicial complex $K$ with the vertex set $V$ and $\phi \colon V\rightarrow 2\zZ_{>0}$ with $\phi(x_i)=4,\phi(y_i)=6$.
   If $SR(K,\phi)$ satisfies the following condition, we can construct a space $X$ such that $H^*(X;\zZ)\cong SR(K,\phi)$. 
   \begin{enumerate}
       \item There is a decomposition of $V$, $\coprod_{i}A_{i} = V$, such that for all $i$ and $\sigma\in P_{\max}(K)$ the multiset $\{\phi(x)\mid x\in \sigma \cap A_i \}$ is equal to $\{4,6\}, \{4\}$ or $\emptyset$.
   \end{enumerate}
\end{theorem}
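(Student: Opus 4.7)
The statement is explicitly quoted as a consequence of Theorem 1.3 in \cite{Ta}, so the proof reduces to verifying that the general hypotheses of Takeda's theorem apply in the degrees-$4$-and-$6$ setting. Nonetheless, let me sketch the underlying construction of a realizing space $X$, since this is what the rest of the paper will rely on.

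The plan is to realize $SR(K,\phi)$ as the cohomology of a homotopy colimit of a diagram indexed by $P_{\max}(K)$, whose values are products of copies of $BSU(2)$ and $BSU(3)$. For each part $A_i$ of the decomposition, and each $\sigma \in P_{\max}(K)$, define
\[
Y_i(\sigma) = \begin{cases} BSU(3) & \text{if } \{\phi(x)\mid x\in \sigma\cap A_i\} = \{4,6\}, \\ BSU(2) & \text{if } \{\phi(x)\mid x\in \sigma\cap A_i\} = \{4\}, \\ \text{pt} & \text{otherwise.} \end{cases}
\]
Set $F(\sigma) = \prod_i Y_i(\sigma)$. The hypothesis guarantees this is well-defined for every maximal simplex. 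For $\tau \leq \sigma$ in $P_{\max}(K)$, the coordinatewise maps (identities, the standard inclusion $BSU(2) \hookrightarrow BSU(3)$, or basepoint inclusions $\mathrm{pt}\hookrightarrow BSU(k)$) assemble to give a functor $F \colon P_{\max}(K) \to \mathbf{Top}$, and we set $X = \operatorname{hocolim}_{P_{\max}(K)} F$.

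Next one must compute $H^*(X;\zZ)$. Since each $H^*(F(\sigma);\zZ)$ is a torsion-free polynomial ring and the transition maps are surjections sending generators to generators or to zero, the Bousfield–Kan spectral sequence for the homotopy colimit reduces, after a cofinality argument using that $P_{\max}(K)$ is closed under intersection (as in Lemma \ref{limit_alg_Stanley_Reisner}), to the claim
\[
H^*(X;\zZ) \cong \lim_{\sigma \in P_{\max}(K)^{op}} H^*(F(\sigma);\zZ) \cong SR(K,\phi).
\]
The second isomorphism is the integral analogue of Lemma \ref{limit_alg_Stanley_Reisner_pre}, proved by the same pullback-along-a-maximal-simplex induction on $|K|$.

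The main obstacle is showing that $\lim^i$ vanishes for $i\geq 1$ so the spectral sequence collapses onto its edge, and then extracting the integral (as opposed to mod $p$) statement. This is exactly where the partition hypothesis does its work: because each $A_i$ contributes at most one degree-$4$ and one degree-$6$ generator to any given maximal simplex, the local models $F(\sigma)$ are genuine products of $BSU(2)$'s and $BSU(3)$'s rather than more complicated homogeneous spaces whose cohomology would fail to be free. The higher $\lim$ vanishing then follows from a Koszul-type acyclicity of the projection diagram between polynomial rings with disjoint generator sets, which is the technical heart of \cite{Ta}.
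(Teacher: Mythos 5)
The paper offers no proof of this statement at all: it is imported verbatim as Theorem~1.3 of \cite{Ta} specialized to generators of degrees $4$ and $6$, so there is no in-paper argument to compare yours against. Your sketch is consistent both with the paper's own summary of Takeda's method (a functor on $P_{\max}(K)$ valued in products of $BSU(2)$ and $BSU(3)$ whose homotopy colimit realizes the algebra) and with the purely algebraic limit formalism the paper later develops in Lemmas~\ref{limit_alg_Stanley_Reisner_pre} and~\ref{limit_alg_Stanley_Reisner}. The genuinely hard steps --- vanishing of the higher derived limits so the Bousfield--Kan spectral sequence collapses, and obtaining the integral rather than mod-$p$ conclusion --- are correctly identified but deferred to \cite{Ta}; that is appropriate here, since the statement being proved is itself that citation. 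One small imprecision worth fixing: you say the hypothesis makes $F$ ``well-defined for every maximal simplex,'' but the functor must be defined on all of $P_{\max}(K)$, and the condition on maximal simplices alone does not suffice --- an intersection of two maximal simplices can meet some $A_i$ in a single degree-$6$ vertex even when each maximal simplex meets $A_i$ in $\{4,6\}$. The theorem's hypothesis is stated for all $\sigma\in P_{\max}(K)$ precisely to exclude this case, and your case analysis for $Y_i(\sigma)$ implicitly relies on that stronger form.
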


\begin{example}
Consider $\zZ[x_1,y_1,y_2]$ then $P_{\max}(L)$ consists of a single simplex $\sigma=(x_1, y_1, y_2)$ and if $A_1=\{ x_1, y_1, y_2\}$ then $\phi(x_1)\in \sigma \cap (A_1)=\{4,6,6\}$.  We can not apply the theorem and indeed this algebra is not realizable and can not even have an action of the unstable Steenrod algebra by Theorem \ref{Sugawara_Toda}.
\end{example}

When restricting to the algebras $A(n,L)$ the condition in the theorem can be expressed in terms of the colouring condition given in the next purely combinatorial lemma. 

\begin{lemma}\label{sticky}

For any graph $G=(V,E)$, 
The following are equivalent:
\begin{enumerate}

\item $\chi(G)\leq n$

\item for any simplicial complex $K=\Delta^{n-1}\ast L$ with $L^1=G$, note that $V(K)=V\cup V(\Delta^{n-1})$, and there is a decomposition $\coprod_{i}A_{i} = V(K)$, such that for all $i$ and $\sigma\in P_{\max}(L)$, $0\leq |(\sigma\cap A_i)\cap V(L)|\leq |(\sigma\cap A_i)\cap V(\Delta^{n-1})|=1$

\item
Considering the map $\phi\colon V(K)\rightarrow \{4, 6\}$ such that  $V(\Delta^{n-1})=\phi^{-1}(4)$   and 
$V(L)=\phi^{-1}(6)$.
For any simplicial complex $K=\Delta^{n-1}\ast L$ with $L^1=G$, there is a decomposition of $V(K)$, $\coprod_{i}A_{i} = V$, such that for all $i$ and $\sigma\in P_{\max}(L)$ the multiset $\{\phi(x)\mid x\in \sigma \cap A_i \}$ is equal to $\{4,6\}$ or $\{4\}$.

\end{enumerate}

\end{lemma}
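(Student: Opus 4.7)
The plan is to establish the implications $(1) \Rightarrow (2) \Rightarrow (1)$ and then observe that $(2)$ and $(3)$ are notational reformulations of the same condition. Before anything else, I would pin down the structure of $P_{\max}(K)$ for $K = \Delta^{n-1} \ast L$: every maximal simplex of $K$ equals $V(\Delta^{n-1}) \cup \tau$ for some maximal simplex $\tau$ of $L$, so $\sigma \in P_{\max}(K)$ if and only if $\sigma = V(\Delta^{n-1}) \cup \tau$ with $\tau \in P_{\max}(L)$. I would also note that conditions (2) and (3) as stated refer to $P_{\max}(L)$, but this seems to be a typographical slip: when $\sigma \in P_{\max}(L)$ one has $\sigma \cap V(\Delta^{n-1}) = \emptyset$, so the equality to $1$ in (2) cannot hold. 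The intended quantifier is over $P_{\max}(K)$, and I will read the statement this way.

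For $(1) \Rightarrow (2)$, I would fix a proper $n$-colouring $c \colon V(G) \to \{1, \dots, n\}$, write $V(\Delta^{n-1}) = \{x_1, \dots, x_n\}$, and for any $L$ with $L^1 = G$ set $A_i = \{x_i\} \cup c^{-1}(i)$. The equality $|(\sigma \cap A_i) \cap V(\Delta^{n-1})| = 1$ is then immediate, since $V(\Delta^{n-1}) \subset \sigma$ and $A_i$ meets $V(\Delta^{n-1})$ in exactly $\{x_i\}$. For the other half, write $\sigma = V(\Delta^{n-1}) \cup \tau$ with $\tau \in P_{\max}(L)$; then $\sigma \cap A_i \cap V(L) = \tau \cap c^{-1}(i)$, and since $\tau$ is a simplex of $L$ its vertices form a clique of $L^1 = G$, so the proper colouring $c$ attains each value at most once on $\tau$.

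For $(2) \Rightarrow (1)$, I would specialise to $L = G$, which is legitimate because $G^1 = G$. In this case every edge $e \in E(G)$ is a maximal simplex of $L$, so $V(\Delta^{n-1}) \cup e \in P_{\max}(K)$. The decomposition supplied by (2) then forces $|e \cap A_i| \leq 1$ for every $i$, so the two endpoints of $e$ lie in different $A_i$'s, and defining $c(v) = i$ when $v \in A_i \cap V(G)$ gives a proper $n$-colouring of $G$. For the equivalence $(2) \Leftrightarrow (3)$, I would observe that the multiset $\{\phi(x) \mid x \in \sigma \cap A_i\}$ is determined by the two cardinalities $a = |(\sigma \cap A_i) \cap V(\Delta^{n-1})|$ and $b = |(\sigma \cap A_i) \cap V(L)|$; the two allowed multisets $\{4, 6\}$ and $\{4\}$ correspond exactly to the pairs $(a, b) = (1, 1)$ and $(1, 0)$, which are precisely the cases permitted by (2).

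I do not anticipate any real obstacle. The only substantive content is the clique property: every simplex of $L$ sits inside $L^1 = G$ as a clique, which is what allows a proper colouring of $G$ to control the intersections $\tau \cap A_i$ even when $\tau$ has dimension larger than $1$. The remaining work is bookkeeping, and the main interpretive hurdle is simply to read $P_{\max}(L)$ in the statement as $P_{\max}(K)$ and to track which simplex sits in which complex at each step.
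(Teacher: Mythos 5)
Your proof is correct and follows essentially the same route as the paper's: both directions use the correspondence $A_i = c^{-1}(i)\cup\{x_i\}$ together with the fact that the vertex set of any simplex of $L$ is a clique in $L^1=G$. You were also right to read $P_{\max}(L)$ in the statement as $P_{\max}(K)$ (the paper's own proof makes the same identification, writing $\sigma\in P_{\max}(L)$ but then using that each $x_i\in\sigma$); your explicit specialisation to $L=G$ in the converse direction is a minor, harmless streamlining of what the paper does for general $L$.
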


\begin{proof}

It is straightforward to check that conditions $(2)$ and $(3)$ are equivalent. 

Assume $\chi(G)\leq n$, we will prove $(2)$. Basically the colouring is the same information as the decomposition. 
Let $f\colon L^1\rightarrow \{1, \dots n\}$, $V(\Delta^{n-1})=\{ x_1, \dots, x_n\}$ be a colouring 
and $A_i=f^{-1}(i)\cup \{ x_i\}$. 
Let $\sigma\in P_{\max}(L)$ note that each $x_i \in \sigma$ as it is in each maximal simplex of $K$. 
So $\sigma \cap A_i$ consists of $x_i$ together with some vertices of $L$. If it contains more than one vertex, then there must be an edge between them even though they are coloured the same. This means there is at most one vertex of $L$ in $\sigma\cap A_i$. This proves $(2)$. 

Assuming $(2)$, define $f\colon L^1\rightarrow  \{1, \dots n\}$ so that $A_i=f^{-1}(i)\cup \{ x_i\}$. To see this is a colouring note that if there is an edge between two vertices with the same colour then there would be a maximal simplex 
$\sigma$ containing both of those vertices, so $|(\sigma\cap A_i)\cap V(L)|>1$ giving a contradiction. 
\end{proof}

So we get a realizabilty result for the $A(n,L)$ based on a colouring of  $L^1$.

\begin{theorem}\label{color_theorem}
   Suppose $L$ is a simplicial complex.
If $\chi(L^1)\leq n$ then   $A(n,L)$ is realizable. In other words we can construct a space $X$ such that $H^*(X;\zZ)\cong A(n,L)$. 
\end{theorem}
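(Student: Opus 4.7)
The plan is to assemble Theorem \ref{color_theorem} directly from the two results already in hand, namely Lemma \ref{sticky} (which translates the chromatic condition on $L^1$ into a combinatorial decomposition of the vertex set of $K = \Delta^{n-1}\ast L$) and Theorem \ref{previous_theorem} (which turns such a decomposition into an actual topological realization). The main work is just recognizing that the combinatorial condition produced by the first result is exactly what the second result demands.

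First I would set $K = \Delta^{n-1}\ast L$ and let $\phi$ assign $4$ to each vertex of $\Delta^{n-1}$ and $6$ to each vertex of $L$, so that $A(n,L) = SR(K,\phi)$. Next I would apply the hypothesis $\chi(L^1)\leq n$: by Lemma \ref{sticky} (the equivalence of (1) and (3)), this chromatic bound is equivalent to the existence of a decomposition $\coprod_i A_i = V(K)$ such that for every $i$ and every $\sigma \in P_{\max}(K)$, the multiset $\{\phi(x)\mid x\in \sigma\cap A_i\}$ equals $\{4,6\}$ or $\{4\}$. (Concretely, one colours $L^1$ with colours $\{1,\dots,n\}$ and takes $A_i$ to be the $i$-th colour class of $L^1$ together with the corresponding vertex $x_i$ of $\Delta^{n-1}$; the $x_i$ being in every maximal simplex is what forces every $A_i$ to meet every $\sigma$ in at least an $x_i$.)

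Since this decomposition satisfies the hypothesis of Theorem \ref{previous_theorem} --- indeed, it satisfies an even slightly stronger version, since $\emptyset$ never occurs among the multisets --- the theorem produces a space $X$ with $H^*(X;\zZ)\cong SR(K,\phi) = A(n,L)$, which is exactly the realizability we want.

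The main subtlety, rather than an obstacle, is bookkeeping: one must be careful that $P_{\max}(K)$ is naturally in bijection with $P_{\max}(L)$ via $\tau \mapsto V(\Delta^{n-1})\cup \tau$, because every maximal simplex of $K$ contains all of $V(\Delta^{n-1})$. Once this identification is made, the colouring-to-decomposition translation of Lemma \ref{sticky} lines up cleanly with the hypothesis of Theorem \ref{previous_theorem}, and no further geometric construction is needed beyond the one already carried out in \cite{Ta}.
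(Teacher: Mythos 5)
Your proposal is correct and is essentially identical to the paper's own proof: both invoke the equivalence of (1) and (3) in Lemma \ref{sticky} and then observe that condition (3) is stronger than the decomposition hypothesis of Theorem \ref{previous_theorem}. Your added remark identifying $P_{\max}(K)$ with $P_{\max}(L)$ via $\tau\mapsto V(\Delta^{n-1})\cup\tau$ is a reasonable piece of bookkeeping that the paper leaves implicit.
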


\begin{proof}
Since $(3)$ and $(1)$ are equivalent in Lemma \ref{sticky} and $(3)$ is stronger than the decomposition hypothesis of Theorem \ref{previous_theorem}, 
$\chi(L^1)\leq n$ will imply $A(n,L)$ is realizable using
Theorem 
\ref{previous_theorem} . 
\end{proof}

\subsection{A topologically defined chromatic number}

\begin{definition}
Let $G$ be a graph. Define the {\em topological chromatic number} of $G$, $\chi_{Top}(G)$ to be the least $n$ such that $A(n,G)$ is realizable. 
\end{definition}

When the simplicial complex $L$ is a graph Theorem \ref{color_theorem} together with Corollary \ref{realimpspan} are equivalent to:

\begin{theorem}\label{topboundsfromgraphs}
For any graph $G$, $s_2\chi(G)\leq \chi_{Top}(G)\leq \chi(G)$. 
\end{theorem}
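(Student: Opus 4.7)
The plan is to assemble this inequality directly from two results already established in the paper, namely Corollary \ref{realimpspan} and Theorem \ref{color_theorem}. The statement is essentially a repackaging, so the proof will be short; the only mild subtlety is bookkeeping around the definition of $\chi_{Top}$.

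For the lower bound $s_2\chi(G) \leq \chi_{Top}(G)$, I would set $n = \chi_{Top}(G)$. By definition of $\chi_{Top}$, the algebra $A(n,G)$ is realizable, i.e.\ there is a space $X$ with $H^{*}(X,\zZ) \cong A(n,G)$. Corollary \ref{realimpspan} then yields $s_2\chi(G) \leq n = \chi_{Top}(G)$, which is exactly the desired inequality.

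For the upper bound $\chi_{Top}(G) \leq \chi(G)$, I would set $n = \chi(G)$. Since $G$ is a graph, it equals its own $1$-skeleton, so $G^1 = G$ and $\chi(G^1) = \chi(G) = n$. Applying Theorem \ref{color_theorem} to the simplicial complex $L = G$ shows that $A(n,G)$ is realizable. Hence $n$ lies in the set whose minimum defines $\chi_{Top}(G)$, giving $\chi_{Top}(G) \leq n = \chi(G)$.

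There is no real obstacle here, since both halves of the inequality are immediate applications of already-proved results once one unravels the definition of $\chi_{Top}$. The only care needed is in the upper bound, to note that when $L = G$ is already a graph, the hypothesis $\chi(L^1) \leq n$ of Theorem \ref{color_theorem} reduces to $\chi(G) \leq n$, which we get for free by taking $n = \chi(G)$.
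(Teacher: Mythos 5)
Your proposal is correct and is essentially identical to the paper's own proof, which likewise cites Corollary \ref{realimpspan} for the lower bound and Theorem \ref{color_theorem} for the upper bound; the unpacking of the definition of $\chi_{Top}$ that you spell out is exactly what the paper leaves implicit.
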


\begin{proof}
 The first inequality is Corollary \ref{realimpspan}
and the second inequality follows from Theorem \ref{color_theorem}. 
\end{proof}

\begin{example}
   We consider the case  
   \[
    A(3,A_{(\zZ/2)^3})=SR(K,\phi)= \zZ[x_1, x_2, x_3,y_1,\dots, y_{28}]/I_K
   \]
 where $A_{(\zZ/2)^3}$ is the graph that we defined in Section \ref{sec:span_coloring}.
   Then by Theorem \ref{construct_Steenrod_mod_str} and $s_2\chi(A_{(\zZ/2)^3})=3$, $SR(L,\phi)\otimes \zZ/2$ has an unstable Steenrod algebra action.
   On the other hand, by Proposition \ref{chromatic_An}, $\chi(A_{(\zZ/2)^3})>3$.
   Therefore we cannot use Theorem \ref{previous_theorem}, and we do not know if $A(3,A_{(\zZ/2)^3})$ is realizable.
\end{example}

\subsection{Stanley-Reisner algebras with two degree $4$ generators and any number of degree $6$ generators}

By Proposition \ref{2_dim_chromatic}, $\chi (G)\leq 2$ if and only if $s_2\chi (G)\leq 2$ for a graph $G$, and we can obtain a necessary and sufficient condition in this case.
\begin{theorem}\label{n_2_if_and_only_if_condition}
   Suppose $SR(K,\phi)=\zZ[x_1,x_2,y_1,\dots y_m]/I_K$ with $\phi(x_i)=4$ and $\phi(y_i)=6$ . 
   Then there is a space $X$ such that $H^*(X;\zZ)\cong SR(K,\phi)$ if and only if $SR(K,\phi)$ satisfies the following conditions.
   \begin{enumerate}
      \item $\chi (K_{\phi^{-1}(6)}^1)\leq 2$
      \item If $x_1y_i=0$ (resp. $x_2y_i=0$) then $x_2y_i\ne 0$ (resp. $x_1y_i\ne 0$).
      \item If $y_iy_j\ne0$, then $x_1x_2y_iy_j\ne 0$.
   \end{enumerate}
\end{theorem}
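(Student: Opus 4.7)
The plan is to use the unstable $\SA_2$-action on $SR(K,\phi) \otimes \zZ/2$ (from realizability) together with Proposition \ref{quotSt} and Theorem \ref{Sugawara_Toda} for the forward direction, and Theorem \ref{previous_theorem} with a $2$-coloring partition for the backward direction.

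For the forward direction, if $X$ realizes $SR(K,\phi)$ then, since $SR(K,\phi)$ is torsion-free, $SR(K,\phi) \otimes \zZ/2 \cong H^*(X;\zZ/2)$ carries an unstable $\SA_2$-action. Proposition \ref{quotSt} gives an inherited unstable action on $\zZ/2[\sigma]$ for every maximal simplex $\sigma$, and Theorem \ref{Sugawara_Toda} forces $|\sigma \cap \phi^{-1}(4)| \geq |\sigma \cap \phi^{-1}(6)|$. Condition (2) is immediate: if $x_1y_i = x_2y_i = 0$, a maximal simplex containing $y_i$ would have no degree-$4$ vertex but at least one degree-$6$ vertex. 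Condition (3) follows analogously: a maximal simplex containing $\{y_i, y_j\}$ with $y_iy_j \ne 0$ must contain both $x_1, x_2$. For condition (1), let $G'$ denote the induced subgraph of $G := K_{\phi^{-1}(6)}^1$ on those $y_i$ adjacent to both $x_1$ and $x_2$; condition (3) forces every edge of $G$ to lie in $G'$, so $\chi(G) = \chi(G')$. Taking $U$ to be the set of maximal simplices of $K$ containing both $x_1, x_2$, the Sugawara--Toda bound (each such $\sigma$ has at most two degree-$6$ vertices) yields $\bigcup_{\sigma \in U}\sigma = \Delta^1 * G'$, and Proposition \ref{quotSt} supplies an $\SA_2$-equivariant surjection $SR(K,\phi) \otimes \zZ/2 \twoheadrightarrow A(2, G') \otimes \zZ/2$. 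Theorem \ref{Steenrod_alg_chromatic_number} then gives $s_2\chi(G') \leq 2$, and Proposition \ref{2_dim_chromatic} upgrades this to $\chi(G') \leq 2$.

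For the backward direction, I would apply Theorem \ref{previous_theorem}. Fix a proper $2$-coloring $c$ of $G$ from condition (1). Partition $V(K) = A_1 \sqcup A_2$ by placing $x_i$ into $A_i$ for $i=1,2$, every $y_j$ with $x_iy_j \ne 0$ but $x_{3-i}y_j = 0$ into $A_i$, and every remaining $y_j$ (adjacent to both $x_1, x_2$) into $A_{c(y_j)}$. For $\sigma \in P_{\max}(K)$, the intersection $\sigma \cap A_i$ contains at most one degree-$4$ vertex (namely $x_i$ if $x_i \in \sigma$); and two degree-$6$ vertices in $\sigma \cap A_i$ would give a proper edge of $G$ sharing colour $i$, contradicting properness of $c$. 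A key sub-claim is then that whenever a degree-$6$ vertex $y_j$ lies in $\sigma \cap A_i$, the degree-$4$ vertex $x_i$ also lies in $\sigma$, so that the multiset $\{\phi(v) : v \in \sigma \cap A_i\}$ equals $\{4,6\}$ rather than $\{6\}$. Theorem \ref{previous_theorem} then produces a realizing space.

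The main obstacle is verifying this last sub-claim on \emph{all} of $P_{\max}(K)$, not merely on maximal simplices. For $y_j$ adjacent to exactly one of $x_1, x_2$, condition (2) together with the fact that $\{x_{3-i}, y_j\} \notin K$ forces every maximal simplex containing $y_j$ to contain $x_i$, and the property descends to intersections. For $y_j$ adjacent to both $x_1, x_2$, one must rule out the configuration $\{x_1, x_2, y_j\} \notin K$, which would produce $\{y_j\} \in P_{\max}(K)$ with forbidden multiset $\{6\}$; this follows from condition (3) applied with $j = i$, since $y_j^2 \ne 0$ in $SR(K,\phi)$ forces $x_1x_2y_j^2 \ne 0$ and hence $\{x_1, x_2, y_j\} \in K$. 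A parallel compatibility check is needed in the forward direction to identify $\bigcup_{\sigma \in U}\sigma$ with $\Delta^1 * G'$. Carrying out both checks rigorously by case analysis on the shapes of maximal simplices forced by Sugawara--Toda is the technical heart of the argument.
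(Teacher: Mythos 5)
Your overall route is the same as the paper's: the forward direction extracts (2) and (3) from Theorem \ref{Sugawara_Toda} applied to the quotients $\zZ/2[\sigma]$ furnished by Proposition \ref{quotSt}, and gets (1) by projecting onto an algebra of the form $A(2,G')$ and combining Theorem \ref{Steenrod_alg_chromatic_number} with Proposition \ref{2_dim_chromatic}; the backward direction feeds the partition induced by a $2$-colouring into Theorem \ref{previous_theorem}. (The paper reaches $A(2,G')$ by deleting, one at a time, the vertices $y_i$ with $x_1y_i=0$ or $x_2y_i=0$ rather than by your single projection onto $\bigcup_{\sigma\in U}\sigma$; the two are interchangeable, though for yours you should define $G'$ by $x_1x_2y_i\ne 0$ rather than by adjacency to both $x_1,x_2$ in the $1$-skeleton, which disposes of the ``parallel compatibility check'' you flag.)

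The substantive problem is your appeal to condition (3) at $i=j$. That reading would force $x_1x_2y_j^2\ne 0$, hence $x_1y_j\ne0$ and $x_2y_j\ne0$, for \emph{every} $j$; it would make condition (2) vacuous, and it is contradicted by the realizable algebra $\zZ[x_1,x_2,y_1]/(x_2y_1)$ (which satisfies Theorem \ref{previous_theorem} with $A_1=\{x_1,y_1\}$, $A_2=\{x_2\}$ but has $x_1x_2y_1^2=0$). So (3) must be read with $i\ne j$ --- exactly as the paper's own backward proof does when it analyzes maximal simplices $\{x_1,y_j\}$ with $x_2y_j=0$. Under that reading your fix is unavailable, and the configuration you correctly isolated, namely $x_1y_j\ne0$, $x_2y_j\ne0$, $x_1x_2y_j=0$ with $y_j$ isolated in $K^1_{\phi^{-1}(6)}$, satisfies (1)--(3) while making $\{y_j\}=\{x_1,y_j\}\cap\{x_2,y_j\}$ an element of $P_{\max}(K)$ whose intersection with whichever $A_i$ contains $y_j$ has degree multiset $\{6\}$, so Theorem \ref{previous_theorem} cannot be applied for any decomposition. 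This is not a defect of your write-up alone: the paper's proof silently asserts that a $P_{\max}(K)$-element with one degree-$6$ vertex is $\{x_1,y_j\}$, $\{x_2,y_j\}$ or $\{x_1,x_2,y_j\}$, skipping the same case. The case is in fact fatal to the ``if'' direction as stated: for $K=\partial\Delta^2$ on $\{x_1,x_2,y_1\}$, applying the Cartan formula to $x_1x_2y_1=0$ forces $\Sq^2x_1=\Sq^2x_2=\Sq^2y_1=0$, and then the Adem relation $\Sq^2\Sq^4=\Sq^6+\Sq^5\Sq^1$ on $y_1$ gives $0=y_1^2$, so $\zZ[x_1,x_2,y_1]/(x_1x_2y_1)\otimes\zZ/2$ admits no unstable $\SA_2$-action even though (1)--(3) hold. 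An extra hypothesis excluding this configuration (equivalently, strengthening (2) so that $x_1y_i\ne0$ and $x_2y_i\ne0$ together imply $x_1x_2y_i\ne0$) is needed before either your argument or the paper's goes through; with that hypothesis added, your forward direction should also establish it, which the $\SA_2$-computation above shows is possible.
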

In this theorem we don't assume the ideal $I_K$ is generated by the products of degree $6$ generators.
If we did assume this we would have an algebra $A(n,L)$ and (2) and (3) would automatically be satisfied. 
\begin{proof}[Proof of Theorem]
   At first we consider the direction that these conditions induces the realizability of $SR(K,\phi)=\zZ[x_1,x_2,y_1,\dots y_m]/I_K$.
   By the first condition, there is a graph homomorphism $f\colon K_{\phi^{-1}(6)}^1\rightarrow K_2$, where $K_2$ is the complete graph on $2$ vertices.
   We regard the vertices of $K_2$ as $x_1,x_2$, the degree $4$ generators of $SR(K,\phi)$. 
   If $y_j$ satisfies $x_1y_j=0$ (resp. $x_2y_i=0$), then $y_i$ doesn't have an adjacent vertex in $K_{\phi^{-1}(6)}^1$, and by the second condition $x_2y_i\ne 0$ (resp. $x_1y_i\ne 0$).
   Thus by changing the image of $f$ on vertices $y_j$ with no adjacent vertex in $K_{\phi^{-1}(6)}^1$ and $f(y_j)y_j=0$, the colouring $f$ can be replaced to satisfy that $y_jf(y_j)\ne 0$.
   We decompose the vertex set $V$ as two sets $A_1=\{x_1\}\cup f^{-1}(x_1)$ and $A_2=\{x_2\}\cup f^{-1}(x_2)$.
   We will check that this decomposition satisfies the condition in Theorem \ref{previous_theorem}.

   Since $\chi (K_{\phi^{-1}(6)}^1)\leq 2$, there is no $3$-cycle in $K_{\phi^{-1}(6)}^1$.
   Therefore the dimension of $K_{\phi^{-1}(6)}$ is at most $1$, and the number of degree $6$ elements in a maximal simplex is at most $2$.
   When $\sigma \in P_{\max}(K)$ contains two degree $6$ generators $y_i,y_j$, by the third condition, $\sigma=\{x_1,x_2,y_i,y_j\}$.
   Since $f$ is a colouring, $f(y_i)\ne f(y_j)$, and the multiset of degrees of generators in $\sigma\cap A_1$ and $ \sigma\cap A_2$ are $\{4,6\}$.
   Thus $\sigma$ satisfies the condition in Theorem \ref{previous_theorem}.
   When $\sigma \in P_{\max}(K)$ contains only one degree $6$ generators $y_j$, $\sigma=\{x_1,y_j\}, \{x_2,y_j\}$ or $\{x_1,x_2,y_j\}$.
   If $\sigma=\{x_1,x_2,y_j\}$, the multiset of degrees of generators in $\sigma\cap A_1$ and $ \sigma\cap A_2$ are $\{4,6\}$ or $\{4\}$ and it satisfies the condition in Theorem \ref{previous_theorem}.
   If $\sigma=\{x_1,y_j\}\in P_{\max}(K)$, there exists a maximal simplex that contains $x_1,y_j$ and doesn't contain $x_2$.
   Since by the third condition such a maximal simplex doesn't contains degree $6$ generator other than $y_j$, the maximal simplex is $\sigma$ itself.
   Therefore $x_2y_j=0$ and since we take $f$ as $f(y_j)y_j\ne 0$ for all $j$, the multiset of degrees of generators in $\sigma\cap A_1=\{4,6\}$ and $ \sigma\cap A_2=\emptyset$.
   By the same argument, in the case $\sigma=\{x_2,y_j\}$ we can obtain that $\sigma\cap A_1=\emptyset$ and $\sigma\cap A_2=\{4,6\}$.
   When $\sigma \in P_{\max}(K)$ doesn't contain a degree $6$ generator, this case satisfies the condition in Theorem \ref{previous_theorem} trivially.
   Therefore for all $\sigma\in P_{\max}(K)$ satisfies the condition in Theorem \ref{previous_theorem} and we obtain that this $SR(K,\phi)$ is realizable.   
   
   Next we consider the other direction.
   We assume that $SR(K,\phi)$ is realizable, and we prove that $SR(K,\phi)$ satisfies the conditions in the statement.
   Now $SR(K,\phi)\otimes \zZ/2$ has an $\SA_2$ action. 
   We consider a generator $y_j$ with $x_2y_j=0$ or $x_1y_j=0$.
   Let $\sigma$ be a maximal simplex that contains $y_j$ then
   $\sigma $ contains at most $1$ degree 4 generator.
   Then by Proposition \ref{quotSt}, $\zZ/2[\sigma]$ has an $\SA_2$ action, and by Theorem \ref{Sugawara_Toda} the multiset of degrees of generators in $\sigma$ must be $\{4,6\}$.
   This means $x_2y_j\ne 0$ or $x_1y_j\ne 0$, and we obtain the second condition.
   Similarly when $y_iy_j\ne 0$, the maximal simplex that contains $y_i,y_j$ must contains $x_1,x_2$.
   Therefore we can obtain the third condition.
   
   It remains to show that the first condition.
   Since $SR(K,\phi)$ is realizable  $SR(K,\phi)\otimes \zZ/2$ has an action of $\SA_2$. 
   We consider two cases. 
   Suppose $x_1x_2=0$ and let $\sigma$ be a maximal simplex of $K$. So $\sigma$ as at most one of 
   $x_1$ and $x_2$. By Proposition \ref{quotSt} 
   $\zZ[\sigma]$    
 has an action of $\SA_2$. So by \ref{Sugawara_Toda}  , $\phi(\sigma)=\{4\} \text{ or } \{4,6\}$, meaning $\sigma$ has at most one degree $6$ vertex and so the graph $K^{1}_{\phi^{-1}(6)}$ is discrete and $\chi(K^{1}_{\phi^{-1}(6)})=1<2$. 
 
 Next consider the case $x_1x_2\not=0$. Let $y_i$ be such that $x_1y_i=0$ or $x_2y_i=0$.  Suppose $\sigma$ is maximal and contains $y_i$, then $\sigma$ doesn't have one of $x_1$ or $x_2$. So arguing as above $y_i$ is not in any of the edges of $K^1_{\phi^{-1}(6)}$. 
 Also $\bigcup_{\tau\in P_{max(K)}\setminus \sigma} \tau= K_{V(K)\setminus \{y_i\}}$ so by Proposition \ref{quotSt} 
 $SR(K_{V(K)\setminus \{y_i\}}, \phi)$ has an $\SA_2$ action. 
 We can continue to remove vertices to get $S=\{ y_{i(1)}, \dots , y_{i(k)}\}\subset \phi^{-1}(6)$ such that 
 for every $y_j\not\in S$, $x_1y_j\not=0$ and $x_2y_j\not=0$ and
 $K_{\phi^{-1}(6)}=K_{\phi^{-1}(6)-S}\coprod S$ as simplicial complexes. 
 
 Then $K'=K_{V(K)-S}$ is of the form $\Delta^1\ast L$
  and $SR(K', \phi)=A(n,L)$ has an $\SA_2$ action. For $\sigma\in K'$ maximal 
 again by Proposition \ref{quotSt} we get an induced $\SA_2$ action on $\zZ/2[\sigma]$, so again by 
  \ref{Sugawara_Toda} 
  since $\sigma$ has at most two degree $4$ generators it has at most two degree $6$ generators. 
 Thus $K'_{\phi^{-1}(6)}$ has no 2-simplices and so $SR(K', \phi)$ is of the form $A(n,G)$. Thus 
 $\chi(K'_{\phi^{-1}(6)})\leq 2$ by Theorem \ref{Steenrod_alg_chromatic_number}. 
 Since $K^1_{\phi^{-1}(6)}$ is $K'_{\phi^{-1}(6)}$ together with some disjoint vertices, which we can colour freely, 
 $\chi(K^1_{\phi^{-1}(6)})\leq 2$. 
 \end{proof}

\section{Some Problems}\label{sec:future_problem}
We propose some questions related to span colouring and Steenrod's problem. We consider 
Questions \ref{mostspan} and \ref{mostreal} the most interesting. 
\subsection{Chromatic number questions}
\begin{question}
What is the relationship between $s_{\mathbf{k}}\chi(G)$ and $\chi(G)$?
\end{question}
We also suggest some more specific cases of this question. 
\begin{question}
What is $\chi(A_{\mathbf{k}^n})$? 
\end{question}
The simplest case is $\chi(A_{(\mathbf{\zZ/2})^3})$ which is at least $4$ and should not be too hard to resolve. 
We guess that $\chi(A_{(\mathbf{\zZ/2})^3})=4 \ {\mbox or\ } 5$.  We know $\chi(A_{\mathbf{k}^2})=2$ and 
Propositions \ref{cong}, \ref{chromatic_An} and \ref{chromatic_difference}
show in many cases 
that $\chi(A_{\mathbf{k}^n})>n$. We ask if this is true more generally. 
\begin{question}
If $n>2$, is $\chi(A_{\mathbf{k}^n})>n$?
\end{question}
Our proofs use counting of vertices, one could also count edges or larger cliques. This may solve more cases, but seems unlikely to be enough of all cases even for finite fields. Perhaps there are more subtle ways of show such inequalities. 
We ask two other related questions. 

\begin{question} 
What is the smallest graph for which $s_{\mathbf{k}}\chi(G)<\chi(G)$?
\end{question}

We know we can take $G=A_{(\zZ/2)^3}$ to get an inequality, can be still get an inequality for subgraphs of 
$A_{(\zZ/2)^3}$?

\begin{question}
How large can $\chi(G)-s_{\mathbf{k}}\chi(G)$ be?  
\end{question}

This difference is at least as big as $n-\chi(A_{\mathbf{k}^n})$, relating this question to the first one. Can the difference be bigger? 

\begin{question}
Is there a nice class of graphs for which $\chi(G)=s_{\mathbf{k}}(G)$?
\end{question}

We saw in Proposition \ref{important2} that $clique(G)\leq s_{\mathbf{k}}(G)$, so graphs with 
$clique(G)=\chi(G)$ would be  such a class. Is there another already well known class such that 
$\chi(G)=s_{\mathbf{k}}(G)$?

We want to define an integral version of span colouring. 
Define $A_{\mathbb{Z}^n}$ to be the graph with 
\[
   V(A_{\mathbb{Z}^n})=\{(U,V)| U, V\subset \mathbb{Z}^n, U\oplus V=\mathbb{Z}^n, \text{rank } U=1\}
\] 
and 
\[
   E(A_{\mathbb{Z}^n})=\{((U,V), (U',V'))\in V(A_{\mathbb{Z}^n})^2| U\subset V', U'\subset V\}.
\]

Next an $n$-span colouring of $G$ over $\zZ$ is a map of graphs $G\rightarrow A_{\mathbb{Z}^n}$. 
The chromatic number of this colouring is denoted $s_{\mathbb{Z}}\chi$.

Instead of the condition $U\oplus V=\zZ^n$ we could have the condition $U\cap V=0$. 
In the field case this is the same, but if we use it for $\zZ$ the following proof that colourings localize would not work. 
We use the conventions $\zZ/0=\zQ$ and $s_0\chi=s_{\zQ}\chi$. 

\begin{proposition}\label{compare}
Suppose $G$ is a graph and $\mathbf{k}$ is a field. 
\begin{enumerate}
\item There is a map $A_{\mathbb{Z}^n}\rightarrow A_{\mathbf{k}^n}$ and hence $s_{\mathbb{Z}}\chi\geq s_\mathbf{k}\chi(G)$. 
\item If $char(\mathbf{k})=p$ then there is a map $A_{(\mathbb{Z}/p)^n}\rightarrow A_{\mathbf{k}^n}$ 
and hence $s_p\chi\geq s_\mathbf{k}\chi(G)$. 

\item The is a map $K_n\rightarrow A_{\zZ^n}$ and hence $\chi(G)\geq s_{\zZ}\chi$. 
\end{enumerate}
\end{proposition}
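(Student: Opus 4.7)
The plan is to construct all three graph homomorphisms explicitly and then apply Proposition \ref{span_colouring_graph_hom}: if there is a graph map $G\to A_{\bullet}$, any composition with a further graph map $A_\bullet \to A_\circ$ yields a colouring of the appropriate type for $G$, and hence the corresponding chromatic inequality.

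For part (1), I would define $\Phi\colon A_{\zZ^n}\to A_{\mathbf{k}^n}$ on vertices by $\Phi(U,V)=(U\otimes_{\zZ}\mathbf{k},\,V\otimes_{\zZ}\mathbf{k})$, where both factors are identified with their images in $\mathbf{k}^n=\zZ^n\otimes_{\zZ}\mathbf{k}$. Since $(U,V)\in V(A_{\zZ^n})$ satisfies $U\oplus V=\zZ^n$ with $U$ of rank $1$, both $U$ and $V$ are free summands, so tensoring preserves the direct sum: $(U\otimes\mathbf{k})\oplus(V\otimes\mathbf{k})=\mathbf{k}^n$ with dimensions $1$ and $n-1$. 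In particular $U\otimes\mathbf{k}\not\subset V\otimes\mathbf{k}$, so $\Phi(U,V)\in V(A_{\mathbf{k}^n})$. For an edge $((U,V),(U',V'))$ in $A_{\zZ^n}$, the inclusions $U\subset V'$ and $U'\subset V$ are preserved by $-\otimes_{\zZ}\mathbf{k}$, so $\Phi$ is a graph homomorphism. Composing a graph map $G\to A_{\zZ^n}$ with $\Phi$ gives $s_{\mathbf{k}}\chi(G)\leq s_{\zZ}\chi(G)$.

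For part (2), the argument is verbatim the same with $\zZ$ replaced by $\zZ/p$ and the tensor product taken over $\zZ/p$; this is legitimate because $\mathbf{k}$ is a $\zZ/p$-algebra when $\mathrm{char}(\mathbf{k})=p$. Here every $\zZ/p$-subspace is automatically a free direct summand in the appropriate sense, so preservation of the complementarity condition $U\oplus V=(\zZ/p)^n \mapsto U\otimes\mathbf{k}\oplus V\otimes\mathbf{k}=\mathbf{k}^n$ and of the inclusion conditions is immediate.

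For part (3), I would send the vertex $i\in V(K_n)$ to $(\langle e_i\rangle,\langle e_j: j\neq i\rangle)\in V(A_{\zZ^n})$, where $\{e_1,\dots,e_n\}$ is the standard basis of $\zZ^n$. These pairs are vertices of $A_{\zZ^n}$ because $\langle e_i\rangle\oplus \langle e_j:j\neq i\rangle=\zZ^n$, and for $i\neq j$ one has $\langle e_i\rangle\subset\langle e_k:k\neq j\rangle$ and $\langle e_j\rangle\subset\langle e_k:k\neq i\rangle$, which is precisely the edge condition. Composing a graph map $G\to K_n$ realising $\chi(G)\leq n$ with this map gives $s_{\zZ}\chi(G)\leq\chi(G)$. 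There is no real obstacle in this proposition; the only point requiring care is the preservation of the direct sum $U\oplus V=\zZ^n$ (respectively $(\zZ/p)^n$) under tensoring with $\mathbf{k}$, which holds because the summands are free.
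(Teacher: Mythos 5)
Your construction is the same as the paper's: tensor the pair $(U,V)$ with $\mathbf{k}$ for parts (1) and (2), and map the vertices of $K_n$ to the standard‑basis clique for part (3). The one step you assert without justification --- that the inclusion $U\subset V'$ survives $-\otimes_{\zZ}\mathbf{k}$ --- is exactly where the paper inserts an explicit retraction $V'\to U$ coming from the splitting $\zZ^n=U\oplus V$; freeness alone would not suffice (the inclusion $2\zZ\subset\zZ$ of free modules becomes zero after $-\otimes\zZ/2$), and what saves you is that $U$ is a direct summand of $\zZ^n$, hence of $V'$, so the inclusion is split.
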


\begin{proof}
First identify $(\zZ)\otimes \mathbf{k}$ with $\mathbf{k}$.
Let $(U,V)\in V(A_{\zZ^n})$ then since 
$(U\oplus V)\otimes \mathbf{k}\cong U\otimes \mathbf{k}\oplus V \otimes \mathbf{k}$ we get
$(U\otimes \mathbf{k}, V\otimes \mathbf{k})\in A_{\mathbf{k}^n}$. Let 
$\rho\colon A_{\mathbb{Z}^n}\rightarrow A_{\mathbf{k}^n}$ be the corresponding map of graphs. 
Note that if $((U,V), (U',V'))\in V(A_{\mathbb{Z}^n})^2$ is an edge then 
we have a retraction of $U\subset V'$ given by the composition 
$U\subset V'\subset \mathbb{Z}^n=U\oplus V \rightarrow U$ 
where the last map is the projection. This implies that $U\otimes \mathbf{k} \subset V'\otimes \mathbf{k}$ and similarly 
$U'\otimes \mathbf{k} \subset V\otimes \mathbf{k}$. So $\rho$ preserves edges and is a map of graphs. 
This completes the proof of 
(1). The proof of (2) is similar. 

For a basis $\{x_1,\dots x_n\}$ of $\mathbb{Z}^n$, the vertices in $A_{\mathbb{Z}^n}$, $(\langle x_i\rangle, \langle x_1,\dots x_{i-1},x_{i+1},\dots x_n\rangle)$ form a subgraph $K_n\subset A_{\mathbb{Z}^n}$ proving (3). 
\end{proof}

\begin{question}\label{mostspan}
Proposition \ref{compare} implies that 

$$\chi(G)\geq s_{\zZ}\chi(G)\geq \sup_{p \text{ prime or } 0}\{ s_{p}\chi(G)\}.$$ 

Are these inequalities equalities?
\end{question}

Both equalities holding in general seems too good to be true, and would give a way of computing chromatic number by computing some (algebraically) local versions of it where scheme theoretic techniques 
might be useful.

\begin{question}
Suppose $char(\mathbf{k})=char(\mathbf{k}')$, does this imply that 
$s_{\mathbf{k}}(G)=s_{\mathbf{k}'}(G)$?
\end{question}

Using the argument in Proposition \ref{compare} if $\mathbf{k}'$ is a field extension of $\mathbf{k}$ 
then $s_{\mathbf{k}}(G)\geq s_{\mathbf{k}'}(G)$. 
From Example \ref{God} we know that $s_{\mathbb{R}}\chi(S^2)=3$, while 
$\chi(S^2)=4$, so a special case of the last question which could be a possible counterexample would be:

\begin{question}
Is $s_{\zQ}\chi(S^2)=3 \text{ or } 4$?
\end{question}

%

We can generalize the span colouring to coloured by poset.
Let $P$ be a poset.
We assume that $P$ has a least element $o$. For any $p,q\in P$ let   $p\vee q$ denote the least upper bound if it exists.
Since $(p\vee q)\vee r=p\vee (q\vee r)$ for $p,q,r\in P$, for a finite subset $S\subset P$ we can define the least upper bound of $S$, $\bigvee S$, if $S$ is empty then $\bigvee S=o$.
We take a subset $Q\subset P$.
Then for a graph $G$, the map $f\colon V(G)\rightarrow Q$ is $(P,Q)$-colouring of a graph $G$ when $f$ satisfies the following conditions:
\begin{itemize}
\item  $\bigvee f(N(v))$ exists. 
   \item For any $v$, the only lower bound of $f(v)$ and $\bigvee f(N(v))$ is $o$.
\end{itemize} 
This $(P,Q)$-colouring contains the ordinary and span colouring.
\begin{example}
   \begin{enumerate}
      \item When $P=2^{[n]}$ and $Q=\{\{1\},\dots \{n\}\}$, a $(P,Q)$-colouring is an ordinary colouring.
      \item Let $P$ be the set of linear subspaces of $\mathbf{k}^n$, and $Q$ be the poset of linear subspaces of $\mathbf{k}^n$ with dimension $1$.
      Then a $(P,Q)$-colouring is an $n$-intermediate span colouring.
      \item When $P = \{A \in 2^{[n]} | |A| \geq k\} \cup \{o\}$ and $Q = \{A \in 2^{[n]} | |A|=k\}$, a $(P, Q)$-colouring
corresponds to a graph homomorphism from $G$ to Kneser graph $KG_{n,k}$.
      \item For $s_{\zZ}$ we can use $P$ the submodules of $\zZ^n$ whose inclusions are split and $Q$ 
      being those with rank $1$. As in case $(2)$ these are not quite the same as maps into  $A_{\zZ^n}$.
 Note that a submodule generated by split submodules need not be split but lies inside a unique smallest split submodule. Something similar could be done for modules over other rings. 
   \end{enumerate}

\end{example}

\subsection{Realizability questions}
This paper only looks at realizability of $\zZ$-algebras but the problem can also be studied over $\zZ/p$ and $\zZ_{(p)}$. 

Theorem \ref{topboundsfromgraphs} says
for any graph $G$, $s_2\chi(G)\leq \chi_{Top}(G)\leq \chi(G)$. 
\begin{question}\label{mostreal}
What is $\chi_{Top}(G)$? Which of the inequalities above can be strict? Is there a combinatorial or purely graph theoretic description of $\chi_{Top}(G)$?
\end{question}
Since $s_2\chi(A_{(\zZ/2)^3} )=3<\chi(A_{(\zZ/2)^3} )$ at least of of the two is strict.  
For a graph with $s_2\chi(G)<\chi(G)$ to show 
$\chi_{Top}(G)=s_2\chi(G)$ probably would require a new construction that builds a space out of a span colouring. As mentioned in the introduction one way to do this would be to upgrade the diagram of $\SA_2$ algebras to one of spaces, this is made more plausible by the fact that the height of the poset $P_{\max}(\Delta^{n-1}\ast G)$ is at most $2$. 
Showing $\chi_{Top}(G)<s_2\chi(G)$ might be helped by looking at more refined invariants such as secondary operations, or operations in other cohomology theories. Maybe we can even still get more from  relations in the primary operations. Out of this information we would need to construct a graph invariant. 
The easiest unresolved case of the general question is:
\begin{question}
Is $A(n,A_{(\zZ/2)^3})$ realizable? 
\end{question}

Not only is $A_{(\zZ/2)^3}$ the smallest graph we know with $s_2\chi(G)<\chi(G)$, the graph has a large symmetry group giving a tool that might help in a construction. 
For a graded algebra $A$ we let $A^k$ denote its $k$-skeleton. That is $A^k=A/(\text{elements of degree}>k)$. Note that we can see using topological skeleta that $A$ realizable also implies that $A^k$ is realizable for every $k$, leading to:
\begin{question}
Is $A(n,A_{(\zZ/2)^3})^{18}$ realizable?
\end{question}

Generalizing this question and localizing at $2$ we ask:

\begin{question}
Is $A(n,G)^{18}\otimes \zZ_{(2)}$ realizable if and only if $s_2\chi(G)\leq n$?
\end{question}
If $A(n,G)\otimes \zZ/2$ has an $\SA_2$ action then the 
$A(n,G)^k\otimes \zZ/2$ have induced actions of Steenrod and all the proofs in 
Section \ref{sec:Span_colouring_and_Steenrod_algebra} only use equations in degrees at most 18 so
the proof of Corollary \ref{realimpspan} should carry over to showing that 
$A(n,G)^{18}\otimes \zZ_{(2)}$ realizable implies  $s_2\chi(G)\leq n$. Starting with $s_2\chi(G)\leq n$ getting a space whose cohomology with coefficients in  $\zZ_{(2)}$ is  $A(n,G)^{18}\otimes \zZ_{(2)}$ may be easier with the dimension restriction. 

\subsection{Stabilization, Fractional colourings and Cores}

We stabilize the ambient vector space to get sequences other graphs colourings and chromatic numbers. We don't know if this gives any new chromatic numbers. We also consider how out graphs are related to cores (which a topologist would call a minimal retract). 

\begin{definition}
A graph $G$ is a {\em core} if every endomorphism of $G$ is an automorphism. 

A graph $i\colon H\subset G$ is a {\em retract} if there is a map $r\colon G\rightarrow H$ such that $r\circ i$ is the identity on $H$. 
\end{definition}
So we can see that cores are the same as minimal retracts.

We fix a field and write $A_{\mathbf{k}^n}$ as $A_n$. 
The (vertices of) the graphs $A_{n}$ are the complement of the partial flag variety $(V_1,V_2)$ in (lines,dim n-1 subspaces). Here $V_1$ is a 1-dimensional subspace and $V_2$ is $n-1$ dimensional. We can also define a fractional colouring version by letting $V_1$ be $k$-dimensional and $V_2$, $n-k$ dimensional, and we get corresponding graphs
$A_{n,k}$. With this notation $A_{n,1}=A_n$. Both of these can be stabilized. In other words in the original definition the subspaces are in $k^n$, but we can take $(U,V)$ to be $k$ and $n-k$ dimensional subspaces in $\mathbf{k}^{n+l}$. Correspondingly we get graphs $A_{n,k,l}$ and we can ask if these cores. For each fixed $n,k$ there are at most finitely many $l$ for which they could be cores. Since 
$\chi(A_{(\zZ/2)^3})>3$ is seems plausible that $A_n=A_{n, 1, 0}$ is a core. We also get chromatic numbers
$\chi_{n,k}$ and $\chi_{n,k,l}$. Since $k^{n+l}\subset k^{n+l+1}$, we see that  $\chi_{n,k,l}\leq \chi_{n,k,l'}$, for $l\geq l'$ so these numbers stabilize as $l$ increases.  Are they already stable for $l=0$? In other words is the inequality
$\chi_{n,k}=\chi_{n,k,0}\leq \lim_{l\rightarrow \infty} \chi_{n,k,l}$ always an equality? This would mesh with none of the $A_{n,k.l}$ being cores for $l>0$. Note that these can all be phrased in terms of poset colourings, $A_{n,k,l}$ corresponds to $P=\{ \mbox{subpaces of } {\mathbf{k}^{n+l}}\}$ and $Q=\{ k \mbox{ dimensional subpaces of } {\mathbf{k}^{n+l}}\}$, or for the set version $P=2^{[n+l]}$ and $Q=\{ \mbox{subsets of } [n+l] \mbox{ of size } k\}$. 

We use a slightly different stabilization for $\chi_{Top}$. If our algebra 
$A(n,G)=SR(\Delta^{n-1}\ast G)$ is realizable 
by a space $X$ then $X\times BSU(3)$ will realize $A(n,G\ast \{y\})=SR(\Delta^n\ast G\ast \{y\})$. So we can stabilize by 
letting $\chi_{Top, t}(G)=\chi_{Top}(G\ast \{ y_{1} \} \ast \dots \ast \{ y_{t}\})-t$. Since $\chi(G\ast \{y\})=\chi(G)+1$
and $s_2\chi(G\ast \{y\})= s_2\chi(G)+1$ 
we get that 
$$s_2\chi(G)\leq \chi_{Top, t}(G)\leq \chi_{Top}(G)\leq \chi(G).$$ Which of these inequalities can be strict? More specifically:

\begin{question}
Does $\chi_{Top, t}(G)=\chi_{Top}(G)$ for all $G$ and $t$?
\end{question}

\bibliographystyle{amsalpha}

\end{document}